\newtheorem{thm}{Theorem}[section]
\newtheorem{lem}[thm]{Lemma}
\newtheorem{cor}[thm]{Corollary}
\newtheorem{prop}[thm]{Proposition}
\newtheorem{rem}[thm]{Remark}
\DeclareMathAlphabet{\mathpzc}{OT1}{pzc}{m}{it}
\numberwithin{equation}{section}
\newcommand{\Wqb}{W_{q,D}}
\newcommand{\Wqq}{\Wqb^{2-2/q}}
\newcommand{\Wqqp}{\Wqb^{2-2/q,+}}
\newcommand{\R}{\mathbb{R}}
\newcommand{\N}{\mathbb{N}}
\newcommand{\A}{\mathbb{A}}
\newcommand{\Wq}{\mathbb{W}_q}
\newcommand{\Wqd}{\dot{\mathbb{W}}_q^+}
\newcommand{\Lq}{\mathbb{L}_q}
\newcommand{\ml}{\mathcal{L}}
\newcommand{\mk}{\mathcal{K}}
\newcommand{\Om}{\Omega}
\newcommand{\ve}{\varepsilon}
\newcommand{\rd}{\mathrm{d}}
\newcommand{\bqn}{\begin{equation}}
\newcommand{\eqn}{\end{equation}}
\newcommand{\bqnn}{\begin{equation*}}
\newcommand{\eqnn}{\end{equation*}}
\newcommand{\bear}{\begin{eqnarray}} 
\newcommand{\eear}{\end{eqnarray}} 
\newcommand{\bean}{\begin{eqnarray*}} 
\newcommand{\eean}{\end{eqnarray*}} 
\newcommand{\bs}{\begin{split}}
\newcommand{\es}{\end{split}}
\newcommand{\dhr}{\mathrel{\lhook\joinrel\relbar\kern-.8ex\joinrel\lhook\joinrel\rightarrow}}
\title[Positive Solutions of a Reaction-Diffusion Equations with Nonlocal Initial Conditions]
{On Positive Solutions of Some System of Reaction-Diffusion Equations with Nonlocal~Initial Conditions}
\author[Ch. Walker]{Christoph Walker}
\address{Leibniz Universit\"at Hannover, Institut f\"ur Angewandte Mathematik, Welfengarten 1, D--30167 Hannover, Germany.}
\email{walker@ifam.uni-hannover.de}
\begin{document}

\begin{abstract}
The paper focuses on positive solutions to a coupled system of parabolic equations with nonlocal initial conditions. Such equations arise as steady-state equations in an age-structured predator-prey model with diffusion. By using global bifurcation techniques, we describe the structure of the set of positive solutions with respect to two parameters measuring the intensities of the fertility of the species. In particular, we establish co-existence steady-states, i.e. solutions which are nonnegative and nontrivial in both components.
\end{abstract}

\keywords{Bifurcation, steady states, diffusion, age structure.\\
{\it Mathematics Subject Classifications (2010)}: 35K55, 35K57, 92D25.}

\maketitle

\section{Introduction}
This paper is dedicated to solutions $u=u(a,x)\ge 0$ and $v=v(a,x)\ge 0$ to the system of parabolic equations
\begin{align}
\partial_a u-\Delta_D u&=-(\alpha_1u+\alpha_2 v)u\ ,\quad  a\in (0,a_m)\ ,\quad x\in\Om\ ,\label{7}\\
\partial_a v-\Delta_D v&=-(\beta_1 v-\beta_2 u)v\ ,\,\quad  a\in (0,a_m)\ ,\quad x\in\Om\ ,\label{8}
\end{align}
subject to the nonlocal initial conditions
\begin{align}
u(0,x)&=\eta\int_0^{a_m}b_1(a)\, u(a,x)\, \rd a\ , \quad   x\in\Om\ ,\label{9}\\
v(0,x)&=\xi\int_0^{a_m}b_2(a)\, v(a,x)\, \rd a\ ,  \quad x\in\Om\ .\label{10}
\end{align}
The operator $-\Delta_D$ in \eqref{7}, \eqref{8} stands for the negative Laplacian on $\Om$ with subscript $D$ indicating that Dirichlet conditions are imposed on the boundary $\partial\Om$. Note that, due to the nonlocal character of the initial conditions, equations \eqref{7}-\eqref{10} do not pose a proper evolution problem.

System \eqref{7}-\eqref{10} arises when studying stationary (i.e. time-independent) solutions to a particular predator-prey system with age structure of the form
\begin{align}
\partial_tu+\partial_a u-d_1\Delta_x u&=-(\alpha_1u+\alpha_2 v)u\ ,\quad t>0\ ,\quad a\in (0,a_m)\ ,\quad x\in\Om\ ,\label{1}\\
\partial_tv+\partial_a v-d_2\Delta_x v&=-(\beta_1 v-\beta_2 u)v\ ,\,\quad t>0\ ,\quad a\in (0,a_m)\ ,\quad x\in\Om\ ,\label{2}
\end{align}
for $u=u(t,a,x)\ge 0$ and $v=v(t,a,x)\ge 0$ subject to the constraints
\begin{align}
u(t,0,x)&=\int_0^{a_m}\eta\, b_1(a)\, u(t,a,x)\,\rd a\ , \quad t>0\ ,\quad x\in\Om\ ,\label{3}\\
v(t,0,x)&=\int_0^{a_m}\xi\, b_2(a)\, v(t,a,x)\, \rd a\ , \quad t>0\ ,\quad x\in\Om\ ,\label{4}
\end{align}
and Dirichlet boundary conditions. It models the situation where a prey and a predator with population densities $u$ and $v$, respectively, inhabit the same spatial region $\Om$ and both species are assumed to be structured by age $a\in (0,a_m)$ and spatial position $x\in\Om$. Here, $a_m>0$ denotes the maximal age of the species. The constants $d_1, d_2>0$ give the rate at which the species diffuse. For notational simplicity they are taken to be  $
d_1=d_2=1$ in \eqref{7}, \eqref{8}. The mortality rates in \eqref{7}, \eqref{8} and \eqref{1}, \eqref{2} are given by
$$
\mu_1(u,v):=\alpha_1u+\alpha_2 v\ ,\quad \mu_2(u,v):=\beta_1 v-\beta_2 u
$$
with positive constants $\alpha_1,\alpha_2,\beta_1$, and $\beta_2$. Equations \eqref{3}, \eqref{4} represent the age-boundary conditions and reflect that individuals with age zero are those created when a mother individual of any age $a\in (0,a_m)$ gives birth with rates $\eta b_1(a)$ and $\xi b_2(a)$, respectively. The functions $b_j=b_j(a)\ge 0$ describe the profiles of the fertility rates while the parameters $\eta, \xi>0$ measure their intensity without affecting the structure of the birth rates. We refer to \cite{WebbSpringer} for a recent survey on the formidable literature about age-structured population models. Of course, \eqref{1}-\eqref{4} represents just a simple age-structured predator-prey model with diffusion and other, in certain regards, biologically maybe more accurate models (e.g. with other mortality and birth rates or different maximal ages for prey and predator) exist as well. The main goal of the present paper is to provide a framework in which problems of this kind including nonlocal initial conditions can be treated.\\

Of particular interest when studying \eqref{7}-\eqref{10} are {\it coexistence solutions}, i.e. solutions $(u,v)$ with both components nonnegative and nonzero. 

Variants of the elliptic counterpart of equations \eqref{7}-\eqref{10} being revealed when age-structure is neglected
and also related elliptic systems for, e.g., competing or cooperative species, have attracted considerable interest in literature both in the past \cite{BlatBrown1,BlatBrown2,CantrellCosner,CosnerLazer,DancerTAMS84,DancerJDE85,Leung,SchiaffinoTesei,ZhouPao} and, more recently, \cite{CasalEilbeckLG,DancerLGOrtega,LopezGomezPardo,LopezGomezChapman,LopezGomezJDE09,LopezGomezMolinaJDE06}, though both lists are far from being complete. Methods used in the cited literature include \mbox{sub-/}supersolution methods and bifurcation techniques for different parameters in order to establish positive solutions for the elliptic equations.

The parabolic problem \eqref{7}-\eqref{10} has recently been investigated in \cite{WalkerArchMath} for slightly different mortality rates of Holling-Tanner type \eqref{A} and particular birth profiles $b_j$ of negative exponential type. To prove coexistence solutions, a bifurcation approach has been chosen with respect to the parameters $\eta$ and $\xi$. The assumption in \cite{WalkerArchMath} that there is no maximal age, i.e. $a_m=\infty$, allows one to recover the elliptic system by integrating the parabolic equations with respect to age. In the present paper with $a_m<\infty$, however, this approach is no longer possible and the analysis becomes more involved. But considering $a_m<\infty$ will allow us herein to take advantage of compact embeddings of the underlying function spaces when interpreting solutions of \eqref{7}-\eqref{10} as the zeros of some function. It thus provides a setting, where we can apply global bifurcation techniques with respect to the bifurcation parameters $\eta$ and $\xi$. This is in contrast to \cite{WalkerArchMath}, where merely local bifurcation results have been obtained. We shall give a partial, but nevertheless rather complete description of the bifurcation diagrams with respect to these parameters. Our results are inspired by those of \cite{BlatBrown1,BlatBrown2} for the correspondig elliptic system without age structure, and our method is based on the celebrated global alternative of Rabinowitz \cite{LopezGomezChapman,Rabinowitz} as well as on the local bifurcation results of Crandall-Rabinowitz \cite{CrandallRabinowitz,LopezGomezChapman}.\\

As pointed out above, the mortality rates considered in \cite{BlatBrown2,WalkerArchMath} (see also \cite{CasalEilbeckLG}) are of Holling-Tanner type, that is, roughly of the form
\bqn\label{A}
\mu_1(u,v):=\alpha_1u+\alpha_2\frac{v}{1+mu}\ ,\quad \mu_2(u,v):=\beta_1v-\beta_2\frac{u}{1+mu}\ .
\eqn
All of the present results can be deduced for these nonlinearities as well with only minor modifications.\\
We shall also mention that the birth profiles $b_1$ and $b_2$ depend on age only. In principle, a spatial dependence could be included as well, but would require some additional effort. In the present paper we investigate positive solutions to \eqref{7}-\eqref{10} in dependence of the fertility intensities $\eta$ and $\xi$. However, one might study bifurcation of equilibrium solutions with respect to other parameters as well, like $\alpha_1$ and $\beta_1$ for instance. For the case of a single equation we refer to the techniques developed in \cite{WalkerAMPA}, which may provide a template also for system \eqref{7}-\eqref{10}.

\section{Main Results}\label{sec0}

To set the stage, let $J:=[0,a_m]$ and let $\Om\subset\R^n$ be a bounded and smooth domain. Throughout this paper we assume $\alpha_1, \alpha_2 , \beta_1 ,\beta_2>0$ and  that, for $j=1,2$,
\bqn\label{11}
b_j\in L_\infty^+(J)\ ,\quad b_j(a)>0\ \text{for $a$ near $a_m$}
\eqn
are normalized such that
\bqn\label{12}
\int_0^{a_m}b_j(a)e^{-\lambda_1 a}\,\rd a=1\ ,
\eqn
where $\lambda_1>0$ denotes the principal eigenvalue of $-\Delta_D$ on $\Om$. For technical reasons we introduce the solution space
$$
\Wq:=L_q(J,\Wqb^2(\Om))\cap W_q^1(J,L_q(\Om))
$$
with $q$ sufficiently large, e.g. $q>n+2$, but point out that all our solutions will actually be smooth with respect to both variables $a$ and $x$. The space $\Wqb^2$ stands for the Sobolev space of order 2 involving the Dirichlet boundary conditions, and we write $\Wq^+$ for the nonnegative functions in $\Wq$.\\

Clearly, for any choice of $\eta$ and $\xi$, $u\equiv 0$ solves \eqref{7} subject to \eqref{9} and $v\equiv 0$ solves \eqref{8} subject to~\eqref{10}. Moreover, taking $v\equiv 0$ in \eqref{7} we obtain positive solutions for \eqref{7} subject to \eqref{9} when regarding $\eta$ as parameter (and, of course, similarly for \eqref{8} with $u\equiv 0$ subject to \eqref{10} when regarding $\xi$ as parameter):

\begin{thm}\label{T1}
For each $\eta>1$ there is a unique solution $u_\eta\in\Wq^+\setminus\{0\}$ to
\bqn\label{13}
\partial_a u-\Delta_D u=-\alpha_1u^2\ ,\quad u(0,\cdot)=\eta\int_0^{a_m}b_1(a)\, u(a,\cdot)\,\rd a\ .
\eqn
The mapping $(\eta\mapsto u_\eta)$ belongs to $C^\infty((1,\infty),\Wq)$ and $\|u_\eta\|_{\Wq}\rightarrow\infty$ as $\eta\rightarrow\infty$. If $\eta\le 1$, then \eqref{13} has no solution in $\Wq^+\setminus\{0\}$.
\end{thm}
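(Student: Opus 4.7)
The plan is to split the statement into three pieces: nonexistence for $\eta\le 1$, which yields to a direct test-function argument; existence and global smoothness for $\eta>1$, which I would tackle by reformulating \eqref{13} as a compact nonlinear fixed-point equation amenable to Crandall--Rabinowitz / Rabinowitz bifurcation; and uniqueness together with the blow-up of norms, which come from sub-homogeneity of the logistic semiflow and the implicit function theorem.

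For the nonexistence part, let $\phi_1>0$ denote the principal Dirichlet eigenfunction of $-\Delta_D$ on $\Om$ and set $U(a):=\int_\Om u(a,\cdot)\phi_1\,\rd x$ for any solution $u\in\Wq^+\setminus\{0\}$. Integrating \eqref{13} against $\phi_1$ and using Green's formula yields
\bqnn
U'(a)+\lambda_1 U(a)=-\alpha_1\int_\Om u(a,\cdot)^2\phi_1\,\rd x\le 0\,,
\eqnn
hence $U(a)\le U(0)e^{-\lambda_1 a}$ on $J$. The nonlocal constraint tested against $\phi_1$ together with the normalization \eqref{12} gives $U(0)\le \eta U(0)$, forcing $U\equiv 0$ when $\eta\le 1$ (with the $\eta=1$ case requiring equality throughout, whence $\int u^2\phi_1=0$ and $u\equiv 0$); a parabolic maximum principle applied to \eqref{13} then rules out $u(0,\cdot)=0$ with $u\not\equiv 0$.

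For existence I would recast \eqref{13} as a fixed-point problem. Given $\varphi\in\Wqqp$, let $u_\varphi\in\Wq^+$ be the unique nonnegative global solution of the semilinear Cauchy problem $\partial_a u-\Delta_D u=-\alpha_1 u^2$ with $u(0,\cdot)=\varphi$; global existence is free because $u_\varphi\le e^{a\Delta_D}\varphi$ by comparison. Define the compact operator $\Phi:\Wqqp\to\Wqqp$ by $\Phi(\varphi):=\int_0^{a_m}b_1(a)u_\varphi(a,\cdot)\,\rd a$, so that \eqref{13} is equivalent to $\varphi=\eta\Phi(\varphi)$ with $\varphi\neq 0$. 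The Fréchet derivative at $0$ is $\Phi'(0)\psi=\int_0^{a_m}b_1(a)e^{a\Delta_D}\psi\,\rd a$, whose spectral radius equals $1$ with positive eigenfunction $\phi_1$ by \eqref{12}. Applying the Crandall--Rabinowitz theorem to the map $F(\eta,u):=(\partial_a u-\Delta_D u+\alpha_1 u^2,\ u(0,\cdot)-\eta\int_0^{a_m} b_1(a)u(a,\cdot)\,\rd a)$ at $(1,0)$ (with transversality inherited from the simplicity of the eigenvalue $1$) provides a smooth local branch of positive solutions bifurcating from the trivial one tangent to $e^{-\lambda_1 a}\phi_1$. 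Rabinowitz's global alternative then extends this to an unbounded continuum of positive solutions; since higher eigenvalues of $-\Delta_D$ correspond to sign-changing eigenfunctions, $(1,0)$ is the only bifurcation point from the trivial branch intersecting the positive cone, so the continuum must escape either in $\eta$ or in $\|u\|_{\Wq}$.

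Uniqueness for fixed $\eta>1$ I would derive from the strict sub-homogeneity of the logistic semiflow: if $u,\tilde u$ are two positive solutions and $t_*:=\inf\{t\ge 1:tu\ge\tilde u\}$, then $t_*=1$ by a strong maximum principle argument applied to $t_*u-\tilde u\ge 0$, yielding $u=\tilde u$. Combining uniqueness with the fact that the linearization $I-\eta\Phi'(u_\eta)$ must then be injective (any kernel element would contradict uniqueness via an infinitesimal perturbation) and Fredholm of index zero, the implicit function theorem yields the $C^\infty$-map $\eta\mapsto u_\eta$ on all of $(1,\infty)$. Finally, boundedness of $\|u_\eta\|_{\Wq}$ along a sequence $\eta_n\to\infty$ would allow extraction of a limit solution at some $\eta=\infty$ via compactness, and the $\phi_1$-test argument above shows $U_{\eta_n}(0)=\eta_n\int b_1 U_{\eta_n}\,\rd a$ cannot stay bounded below away from zero unless $\int b_1 U_{\eta_n}\to 0$, which contradicts the uniform positivity implied by the bifurcation branch. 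The main obstacle, in my view, is the functional-analytic setup: one must carefully choose the target spaces so that $F$ is smooth, Fredholm, and compatible with the cone structure despite the nonlocal age-boundary condition coupling all values of $u(a,\cdot)$.
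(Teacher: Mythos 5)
Your overall skeleton (test against $\varphi_1$ for $\eta\le 1$, bifurcation from $(1,0)$ of a global positive branch, implicit function theorem for smoothness) is close in spirit to the paper, which gets the unbounded positive continuum from a cited global bifurcation result and proves nonexistence exactly by your $\varphi_1$-argument. However, there are genuine gaps. First, your route to smoothness is based on the claim that uniqueness of the nonlinear problem forces injectivity of the linearization $1-\eta\Phi'(u_\eta)$; this is a non sequitur (a unique solution can perfectly well be a degenerate zero, and an element of the kernel does not produce a second solution). The paper instead proves invertibility directly: from the solution identity one gets $r\big(\eta H_{[\alpha_1u_\eta]}\big)=1$ (Remark~\ref{RR}), and the strict monotonicity of the spectral radius in the potential (Lemma~\ref{L1}) then gives $r\big(\eta H_{[2\alpha_1u_\eta]}\big)<1$, so $1-\eta H_{[2\alpha_1u_\eta]}$ is invertible; some argument of this Krein--Rutman type is needed and is missing from your proposal. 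Second, you never establish that a solution exists for \emph{every} $\eta>1$: Rabinowitz gives an unbounded continuum, and you explicitly leave open the dichotomy ``escape in $\eta$ or in $\|u\|_{\Wq}$''. Without an a priori bound the continuum could blow up in norm over a bounded $\eta$-interval and never reach large $\eta$. The paper closes this with Lemma~\ref{L3}: the parabolic maximum principle against the ODE supersolution $\psi(a)=(\alpha_1 a+\|u(0)\|_\infty^{-1})^{-1}$ gives a sup-bound, the nonlocal condition then yields $\|u_\eta(0)\|_\infty\le(\kappa\eta)^2$, and maximal $L_q$-regularity together with the smoothing estimate $\|e^{a\Delta_D}\|_{\ml(L_q,\Wqq)}\le ca^{1/q-1}$ upgrades this to a $\Wq$-bound on bounded $\eta$-intervals, forcing the continuum to be unbounded in $\eta$.

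Third, your argument that $\|u_\eta\|_{\Wq}\to\infty$ invokes a ``uniform positivity implied by the bifurcation branch'' that you have not proved; the paper uses the explicit $\eta$-uniform lower bound $u_\eta\ge c(\eta)\varphi_1$ of Lemma~\ref{L3} (itself obtained from the comparison Lemma~\ref{L2}, which handles the nonlocal initial condition via positivity of $(1-\eta H_{[h]})^{-1}$ when $r(\eta H_{[h]})<1$). A cheaper fix along your own lines is available --- if $\|u_{\eta_n}(a)\|_\infty\le C$ then $z(a):=\int_\Om\varphi_1u_{\eta_n}(a)\,\rd x$ satisfies $z(a)\ge z(0)e^{-(\lambda_1+\alpha_1C)a}$, and the nonlocal condition gives $1\ge\eta_n\int_0^{a_m}b_1(a)e^{-(\lambda_1+\alpha_1C)a}\,\rd a$, contradicting $\eta_n\to\infty$ --- but as written the step is not justified. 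Finally, your uniqueness proof by sub-homogeneity/sweeping is a genuinely different (and plausible) route from the paper's comparison Lemma~\ref{L2}; to make it rigorous you must propagate strict positivity through the nonlocal age-boundary condition (to see that $t_*u-\tilde u$ lies in $\mathrm{int}(\Wqqp)$ for \emph{all} $a\in J$, including $a=0$) and use the Hopf-type boundary behaviour to lower $t_*$, details that are not automatic since both solutions vanish on $\partial\Om$.
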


To study the solutions of \eqref{7}-\eqref{10} we first keep $\eta$ fixed and regard $\xi$ as bifurcation parameter. We thus write $(\xi,u,v)$ for a solution and suppress $\eta$. Then Theorem~\ref{T1} provides, in addition to the trivial branch of zero solutions
$$
\mathfrak{B}_0:=\{(\xi,0,0)\,;\,\xi\in\R\}\subset\R\times\Wq^+\times \Wq^+\ ,
$$
a semi-trivial branch
$$
\mathfrak{B}_1:=\{(\xi,0,v_\xi)\,;\,\xi\in(1,\infty)\}\subset\R^+\times\Wq^+\times (\Wq^+\setminus\{0\})\ ,
$$
where $(\xi,v_\xi)$ is the solution to \eqref{8} with $u\equiv 0$ subject to \eqref{10}.
If $\eta>1$, there is another semi-trivial branch
$$
\mathfrak{B}_2:=\{(\xi,u_\eta,0)\,;\,\xi\in\R\}\subset\R\times (\Wq^+\setminus\{0\})\times\Wq^+\ 
$$
from which a branch of positive coexistence solutions bifurcates. More precisely, we have:

\begin{thm}\label{T2}
For $\eta\le 1$ there is no solution $(\xi,u,v)\in\R^+\times(\Wq^+\setminus\{0\})\times \Wq^+$ to \eqref{7}-\eqref{10}. For $\eta>1$ there is a unique value $\xi_0(\eta)>0$ such that $(\xi_0(\eta),u_\eta,0)\in\mathfrak{B}_2$ is a bifurcation point. A branch $\mathfrak{B}_3\subset \R^+\times (\Wq^+\setminus\{0\})\times(\Wq^+\setminus\{0\})$ of solutions to \eqref{7}-\eqref{10} emanates from $(\xi_0(\eta),u_\eta,0)$ satisfying the alternatives
\begin{itemize}
\item[(i)] $\mathfrak{B}_3$ joins $\mathfrak{B}_2$ with $\mathfrak{B}_1$, or
\item[(ii)] $\mathfrak{B}_3$ is unbounded in $\R^+\times (\Wq^+\setminus\{0\})\times(\Wq^+\setminus\{0\})$.
\end{itemize}
Bifurcation is to the right, i.e., $\xi>\xi_0(\eta)$ for any $(\xi,u,v)\in \mathfrak{B}_3$. If, in addition,
\bqn\label{B}
b_2\in L_1(J,(1-e^{-sa})^{-1}\rd a)
\eqn
for some $s>0$, then (ii) can only occur if $\mathfrak{B}_3$ is unbounded with respect to the parameter $\xi$, and there is $N\in (1,\infty]$ such that (i) must occur for $1<\eta<N$.
\end{thm}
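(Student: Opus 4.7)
The plan is to recast \eqref{7}--\eqref{10} as a zero equation $F(\xi,u,v)=0$ for a $C^\infty$ map that is a compact perturbation of the identity on $\Wq\times\Wq$, and then to combine a local Crandall--Rabinowitz analysis with the global alternative of Rabinowitz. For fixed $v\in\Wq^+$ the $u$-equation in \eqref{7} generates a positive, smoothing age-evolution operator $\Pi^{v}(a,0)$ on $L_q(\Om)$, and analogously $\Sigma^{u}(a,0)$ for \eqref{8}; together with the nonlocal conditions \eqref{9}--\eqref{10} this reduces the problem to a fixed-point equation on $L_q(\Om)^2$ for the traces $u(0,\cdot),v(0,\cdot)$, the compactness coming from the embedding $\Wq\hookrightarrow C(J\times\bar\Om)$ (available since $q>n+2$). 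For the nonexistence statement I would test \eqref{7} against the principal Dirichlet eigenfunction $\phi_1$ of $-\Delta_D$: setting $U(a):=\int_\Om u(a,\cdot)\phi_1\,\rd x$ gives $U'+\lambda_1 U\le 0$, hence $U(a)\le U(0)e^{-\lambda_1 a}$; substituting into \eqref{9} and using the normalization \eqref{12} yields $U(0)\le\eta U(0)$, with strict inequality whenever $u\not\equiv 0$. Thus $\eta\le 1$ forces $u\equiv 0$.

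To locate the bifurcation point on $\mathfrak{B}_2$ I would linearize the $v$-equation at $(u_\eta,0)$: a kernel direction $w$ solves $\partial_a w-\Delta_D w-\beta_2 u_\eta w=0$ with $w(0)=\xi\int_0^{a_m}b_2(a)w(a)\,\rd a$. Letting $V_\eta(a)$ denote the evolution operator of this linear equation and
\begin{equation*}
A_\eta\phi:=\int_0^{a_m}b_2(a)V_\eta(a)\phi\,\rd a\ ,
\end{equation*}
the problem reduces to $\phi=\xi A_\eta\phi$. By parabolic smoothing together with the positivity of $b_2$ near $a_m$ from \eqref{11}, $A_\eta$ is compact and strongly positive on $L_q(\Om)$; Krein--Rutman then provides a unique principal eigenvalue $\lambda_0(\eta)>0$ with positive eigenfunction $\phi_0$, and $\xi_0(\eta):=1/\lambda_0(\eta)$ is the only $\xi$ admitting a positive kernel direction. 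A standard transversality check lets Crandall--Rabinowitz supply a local curve of coexistence solutions through $(\xi_0(\eta),u_\eta,0)$, and Rabinowitz's global alternative, restricted to the positive cone, extends it to the continuum $\mathfrak{B}_3$; returning to $\mathfrak{B}_2$ is excluded by uniqueness of $\xi_0(\eta)$, so $\mathfrak{B}_3$ either joins $\mathfrak{B}_1$ or is unbounded. For the direction, $v\ge 0$ enters \eqref{7} as extra mortality, so a comparison argument on the traces $u(0,\cdot)$ gives $u\le u_\eta$ along $\mathfrak{B}_3$; inserting this into \eqref{8} yields $\partial_a v-\Delta_D v-\beta_2 u_\eta v\le -\beta_1 v^2\le 0$, hence $v(a,\cdot)\le V_\eta(a)v(0,\cdot)$. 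Substituting into \eqref{10} gives $v(0,\cdot)\le\xi A_\eta v(0,\cdot)$, and pairing with the positive left eigenfunction $\phi_0^*$ of $A_\eta$ delivers $\xi\ge\xi_0(\eta)$, with strict inequality because the $\beta_1 v^2$ contribution is nontrivial whenever $v\not\equiv 0$.

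Under \eqref{B}, the weighted integrability of $b_2$ lets one quantitatively invert the linear nonlocal initial-value problem for $v$ and obtain a $\Wq$-estimate on $v$ in terms of $\xi$ and $v(0,\cdot)$; combined with $u\le u_\eta$ this bounds $(u,v)$ on any compact $\xi$-interval, so alternative (ii) necessarily entails $\xi\to\infty$ along $\mathfrak{B}_3$. For the existence of $N$: by Theorem~\ref{T1}, $u_\eta\to 0$ in $\Wq$ and $\xi_0(\eta)\to 1$ as $\eta\to 1^+$, so for $\eta$ slightly above $1$ the branch $\mathfrak{B}_3$ bifurcates near $(\xi,u,v)=(1,0,0)$ and stays close to $\mathfrak{B}_0\cup\mathfrak{B}_1$; a continuation argument in $\eta$ then forces $\mathfrak{B}_3$ to terminate on $\mathfrak{B}_1$ rather than escape, and $N\in(1,\infty]$ is taken as the supremum of $\eta>1$ for which this persists. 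The main difficulty will lie in this last paragraph: both the comparison $u\le u_\eta$ and the a priori bound coming from \eqref{B} require delicate handling of the nonlocal structure, and the continuation in $\eta$ has to be arranged so that no secondary bifurcation point on $\mathfrak{B}_2$ obstructs it.
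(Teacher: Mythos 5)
Most of your outline tracks the paper's actual route: the nonexistence test against $\varphi_1$, the identification of $\xi_0(\eta)$ as the reciprocal of the principal eigenvalue of $\hat{H}_{[-\beta_2u_\eta]}$ (your $A_\eta$), the reformulation as a compact fixed-point problem in $\Wq\times\Wq$, global bifurcation via Rabinowitz with a positivity-preservation argument to reduce the alternatives to ``joins $\mathfrak{B}_1$'' or ``unbounded'', and the comparison argument $u\le u_\eta$, $v(0)\le\xi\hat{H}_{[-\beta_2u_\eta]}v(0)$ giving bifurcation to the right. (The paper implements the local step through a fixed-point-index change rather than Crandall--Rabinowitz, and ruling out a return to $\mathfrak{B}_2$ or $\mathfrak{B}_0$ takes a limiting-eigenvector argument rather than ``uniqueness of $\xi_0(\eta)$'' alone, but these are implementational details.)

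The genuine gap is in your last paragraph, i.e.\ precisely the two assertions tied to \eqref{B}. First, your use of \eqref{B} is mis-aimed: an estimate of $v$ ``in terms of $\xi$ and $v(0,\cdot)$'' cannot close the argument, because the whole difficulty is to bound $v(0)$ itself a priori. What is needed (Lemma~\ref{L10}) is a bound on $v(a)$ that is \emph{independent of} $v(0)$, obtained by comparing $v$ with the solution of the logistic ODE $f'=-\beta_1f^2+mf$, $m=\beta_2\kappa\eta^2$; this gives $v(a)\le m/(\beta_1(1-e^{-ma}))$, which blows up like $1/a$ as $a\to0$, and hypothesis \eqref{B} is exactly the integrability of $b_2$ against this singularity, so that $v(0)=\xi\int_0^{a_m}b_2v\,\rd a$ is bounded for $\xi\le M$. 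Second, your construction of $N$ by ``continuation in $\eta$'' from $\eta$ near $1$ is not a proof: nothing in the global alternative keeps $\mathfrak{B}_3$ close to $\mathfrak{B}_0\cup\mathfrak{B}_1$ away from the bifurcation point, you give no argument that the set of $\eta$ for which the branch terminates on $\mathfrak{B}_1$ is nonempty or open/closed, and $N$ is left undefined. The mechanism the paper uses is different and concrete: by Lemma~\ref{L4} any coexistence solution satisfies $v\ge v_\xi$, so Lemma~\ref{L5}(ii) forces $\eta\ge 1/r(H_{[\alpha_2v_\xi]})$; since $\xi\mapsto r(H_{[\alpha_2v_\xi]})$ is continuous and strictly decreasing, setting $N:=1/\lim_{\xi\to\infty}r(H_{[\alpha_2v_\xi]})$ as in \eqref{ooo} shows that for $1<\eta<N$ the parameter $\xi$ is a priori bounded along $\mathfrak{B}_3$; combined with the \eqref{B}-bound of Lemma~\ref{L10} this excludes unboundedness, so by the dichotomy the branch must join $\mathfrak{B}_1$ (and a limiting-eigenvector argument identifies the junction as $\xi_1(\eta)$). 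This constraint coming from the \emph{lower} bound $v\ge v_\xi$ is the key idea absent from your proposal.
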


The values of $N$ and of $\xi_0(\eta)$ as well as the value $\xi_1(\eta)$ of $\xi$ associated to the point where $\mathfrak{B}_3$ meets $\mathfrak{B}_1$ if alternative (i) occurs are related to the spectral radii of some compact operators and will be determined precisely (see \eqref{17}, \eqref{ooo}, and Lemma~\ref{L34}). It is worthwhile to point out that in either case of the alternatives we obtain coexistence solutions; that is, solutions $(\xi,u,v)$ with both components nonzero, i.e. $u,v$ belonging to $\Wq^+\setminus\{0\}$. For those values of $\eta$ for which alternative (ii) occurs there are coexistence solutions for any $\xi>\xi_0(\eta)$ while for those $\eta$-values leading to occurrence of alternative (i) there are coexistence solutions for $\xi_0(\eta)<\xi<\xi_1(\eta)$.

Actually, we conjecture that under the additional assumption \eqref{B}, we can take $N= \infty$ and thus $\mathfrak{B}_3$ must join $\mathfrak{B}_2$ with $\mathfrak{B}_1$ for each $\eta>1$. We refer to Remark~\ref{R2} for further details.\\

Next, we regard $\eta$ as bifurcation parameter and keep $\xi$ fixed. We thus write $(\eta,u,v)$ for a solution to \eqref{7}-\eqref{10} and suppress $\xi$. Suppose first that $\xi>1$. Then Theorem~\ref{T1} provides two semi-trivial branches
$$
\mathfrak{S}_1:=\{(\eta,u_\eta,0)\,;\, \eta>1\}\ ,\quad \mathfrak{S}_2:=\{(\eta,0,v_\xi)\,;\, \eta\in\R\}
$$
with $\mathfrak{S}_j\subset \R\times\Wq^+\times \Wq^+$. Similarly as in Theorem~\ref{T2}, a branch of positive coexistence solutions bifurcates from $\mathfrak{S}_2$. In this case, however, the branch must be unbounded:

\begin{thm}\label{T3}
For $\xi>1$ there is a unique value $\eta_0(\xi)>1$ such that $(\eta_0(\xi),0,v_\xi)\in\mathfrak{S}_2$ is a bifurcation point. An unbounded branch $\mathfrak{S}_3\subset \R^+\times (\Wq^+\setminus\{0\})\times(\Wq^+\setminus\{0\})$ of solutions to \eqref{7}-\eqref{10} emanates from $(\eta_0(\xi),0,v_\xi)$.
This bifurcation is to the right, that is, $\eta>\eta_0(\xi)$ for any $(\eta,u,v)\in \mathfrak{S}_3$. If, in addition, $b_2$ satisfies \eqref{B}
for some $s>0$, then $\mathfrak{S}_3$ is unbounded with respect to the parameter $\eta$.
\end{thm}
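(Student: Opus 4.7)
The plan is to follow the same overall strategy as for Theorem~\ref{T2}: cast \eqref{7}-\eqref{10} with $\eta$ as bifurcation parameter as an equation $F(\eta,u,v)=0$ on $\R\times\Wq\times\Wq$, where $F$ is obtained by inverting the linear parabolic part of the system together with the nonlocal side conditions \eqref{9}-\eqref{10}. The compactness provided by $a_m<\infty$ and the embedding $\Wq\dhr L_q(J\times\Om)$ delivers the Fredholm and compactness ingredients needed both for the local Crandall-Rabinowitz theorem and for Rabinowitz's global alternative in the positive cone.

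I would first locate $\eta_0(\xi)$ by linearising $F$ along $\mathfrak{S}_2$ at $(\eta,0,v_\xi)$. The $v$-part of the linearisation is invertible since $v_\xi$ is the unique nontrivial solution supplied by Theorem~\ref{T1} applied with parameter $\xi>1$, so the kernel reduces to the $u$-equation
\bqnn
\partial_a\phi-\Delta_D\phi=-\alpha_2 v_\xi\phi,\qquad \phi(0,\cdot)=\eta\int_0^{a_m}b_1(a)\,\phi(a,\cdot)\,\rd a.
\eqnn
Introducing the corresponding compact positive solution operator $L_\xi$, Krein-Rutman yields a positive kernel precisely when $\eta=\eta_0(\xi):=1/r(L_\xi)$. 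A pointwise comparison forces $L_\xi<L_0$ strictly once $v_\xi>0$, while $r(L_0)=1$ by the normalisation \eqref{12}; hence $\eta_0(\xi)>1$. The transversality hypothesis of Crandall-Rabinowitz is verified by differentiating the eigenvalue relation in $\eta$, as in the parallel step of Theorem~\ref{T2}.

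The heart of the argument is the global stage. Rabinowitz's alternative produces a connected component $\mathfrak{S}_3$ in the positive cone that either is unbounded or meets a bifurcation point on another semi-trivial branch. A return to $\mathfrak{S}_2$ is excluded by the uniqueness of $\eta_0(\xi)$ from Krein-Rutman. The decisive observation, which replaces the alternative of Theorem~\ref{T2} by the stronger conclusion of unboundedness, is that $\mathfrak{S}_1$ carries no bifurcation point when $\xi>1$: any such $(\tilde\eta,u_{\tilde\eta},0)$ would require a nontrivial positive solution of
\bqnn
\partial_a\psi-\Delta_D\psi=\beta_2 u_{\tilde\eta}\psi,\qquad \psi(0,\cdot)=\xi\int_0^{a_m}b_2(a)\,\psi(a,\cdot)\,\rd a,
\eqnn
and the associated compact positive operator $K_{\tilde\eta}$ satisfies $K_{\tilde\eta}>K_0$ whenever $u_{\tilde\eta}>0$, so $r(K_{\tilde\eta})>r(K_0)=1$, forcing the critical value $1/r(K_{\tilde\eta})<1$. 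Since $\xi>1$ by hypothesis, no such $\tilde\eta>1$ exists, and therefore $\mathfrak{S}_3$ must be unbounded. The direction $\eta>\eta_0(\xi)$ is then settled by the same second-order projection against the adjoint eigenfunction used in Theorem~\ref{T2}, its sign being dictated by the positive coefficients $\alpha_i,\beta_i$.

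To upgrade ``unbounded'' to ``unbounded in $\eta$'' under the extra hypothesis \eqref{B}, I would produce an a~priori bound in $\Wq\times\Wq$ along any sequence in $\mathfrak{S}_3$ on which $\eta$ remains bounded. The bound on $u$ follows at once from the comparison $u\le u_\eta$ and Theorem~\ref{T1}. The hard part will be the bound on $v$: the coupling $+\beta_2 uv$ in \eqref{8} is growth-promoting, and the role of \eqref{B} is precisely to provide, through a weighted integration against $(1-e^{-sa})^{-1}$, control of the nonlocal boundary value $v(0,\cdot)=\xi\int b_2 v\,\rd a$ against the dissipation supplied by $-\beta_1 v^2$ and the already bounded $u$. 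Parabolic maximal regularity then promotes the resulting $L_\infty$-bound into the $\Wq$-bound required to close the argument.
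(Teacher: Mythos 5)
Your skeleton coincides with the paper's proof of Theorem~\ref{T3}: the same reformulation via compact solution operators, the same critical value $\eta_0(\xi)=1/r(H_{[\alpha_2 v_\xi]})>1$ as in \eqref{oo}, a global alternative, and an a priori bound under \eqref{B} obtained exactly as in Lemma~\ref{L10} after bounding $u$ by $u_\eta$. Your exclusion of a termination on $\mathfrak{S}_1$ --- a nonnegative nontrivial solution of the linearized $v$-equation at $(\tilde\eta,u_{\tilde\eta},0)$ forces $\xi=1/r(\hat{H}_{[-\beta_2 u_{\tilde\eta}]})<1$ --- is a correct variant; the paper reaches the same conclusion more directly from Lemma~\ref{L4}, which gives $v\ge v_\xi$ along the branch, so the $v$-component can never vanish. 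However, two steps of your argument are genuinely missing.

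First, the claim that Rabinowitz's theorem ``produces a connected component in the positive cone that either is unbounded or meets a bifurcation point on another semi-trivial branch'' is the main thing to be proved, not what the theorem gives. The global alternative yields a component in $\R\times\Wq\times\Wq$, and in the unilateral version actually used in the paper there is a third alternative (a point with $(u,w)\in\mathrm{rg}(1-\hat{K}(\eta_0(\xi)))\setminus\{(0,0)\}$) that must be excluded by the $\tau$-shift positivity argument of Lemma~\ref{L9}. That the component consists of coexistence states away from $(\eta_0(\xi),0,v_\xi)$ requires the strong positivity of the evolution operators near the bifurcation point and, globally, the limit argument of Lemma~\ref{L14}: along a sequence in $\R^+\times\Wqd\times\Wqd$ converging to a boundary point one has $v_j\ge v_\xi$ (Lemma~\ref{L4}), hence the limit has $v\not\equiv0$, so $u\equiv0$ and $v=v_\xi$ by uniqueness, and the normalized sequence converges to a nonnegative eigenvector of the linearization, forcing $\eta=\eta_0(\xi)$, a contradiction. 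Nothing in your sketch supplies this machinery, and without it the dichotomy ``unbounded or meets a semi-trivial bifurcation point'' that you subsequently exploit is not available. Second, the direction claim: the theorem asserts $\eta>\eta_0(\xi)$ for \emph{every} $(\eta,u,v)\in\mathfrak{S}_3$, a global statement; the second-order projection you invoke (and do not compute) could at best give the local direction at the bifurcation point and cannot prevent the branch from bending back to values $\eta<\eta_0(\xi)$. The paper derives the global inequality from the a priori constraint of Lemma~\ref{L5}~(ii): for a coexistence solution with $\xi>1$, $v\ge v_\xi$ yields $(1-\eta H_{[\alpha_2 v_\xi]})\,u(0)\le 0$ and hence $\eta\ge 1/r(H_{[\alpha_2 v_\xi]})=\eta_0(\xi)$; note that Theorem~\ref{T2} handles its rightward bifurcation the same way (via Lemma~\ref{L5}), so there is no second-order ``parallel step'' to copy. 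A smaller imprecision: the invertibility of the $v$-block of the linearization at $(\eta,0,v_\xi)$ follows from $r(\xi\hat{H}_{[2\beta_1 v_\xi]})<r(\xi\hat{H}_{[\beta_1 v_\xi]})=1$ (the analogue of \eqref{sp} together with Lemma~\ref{L1}), not from the mere uniqueness of $v_\xi$.
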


Note that $\mathfrak{S}_3$ consists exclusively of coexistence solutions. If $b_2$ satisfies \eqref{B}, then there is a coexistence solution for any $\xi>1$ and any $\eta>\eta_0(\xi)$. The exact value of $\eta_0(\xi)$ will be specified later in~\eqref{oo}.\\

The case $\xi<1$ is more difficult, and we obtain merely a partial result. In fact, for values of $\xi<1$ near~1 we can show that a local branch of positive solutions bifurcates from $\mathfrak{S}_1$. Observe that $\mathfrak{S}_1$ is the only semi-trivial branch in this case.

\begin{thm}\label{T4}
There is $\delta\in[0,1)$ with the property that for $\xi\in (\delta,1)$ there are a unique value $\eta_1(\xi)>1$ and $\varepsilon>0$ such that a local branch
$$
\mathfrak{S}_4:=\{(\eta,u,v)\,;\,\eta_1(\xi)<\eta<\eta_1(\xi)+\varepsilon \}
\subset \R^+\times (\Wq^+\setminus\{0\})\times(\Wq^+\setminus\{0\})
$$ 
of positive solutions to \eqref{7}-\eqref{10} bifurcates to the right from $(\eta_1(\xi),u_{\eta_1(\xi)},0)\in \mathfrak{S}_1$.
\end{thm}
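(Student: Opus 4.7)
The strategy is to apply the local bifurcation theorem of Crandall-Rabinowitz \cite{CrandallRabinowitz,LopezGomezChapman} at the semi-trivial branch $\mathfrak{S}_1$, in analogy with Theorems~\ref{T2} and~\ref{T3}. First, reformulate \eqref{7}-\eqref{10} as $F(\eta,u,v)=0$ for a smooth mapping $F$ defined on an open subset of $\R\times \Wq\times \Wq$; smoothness in all arguments is guaranteed by the polynomial nonlinearities and the continuous embedding $\Wq\hookrightarrow C(J\times\bar\Om)$ valid for $q>n+2$. Because $a_m<\infty$, the linearization $D_{(u,v)}F(\eta,u_\eta,0)$ is a Fredholm operator of index zero.

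Next, I would compute $D_{(u,v)}F(\eta,u_\eta,0)(\phi,\psi)$. Since $v=0$ the system decouples in an upper-triangular manner: $\psi$ must solve
\bqnn
\partial_a\psi-\Delta_D\psi=\beta_2 u_\eta\psi,\qquad \psi(0,\cdot)=\xi\int_0^{a_m}b_2(a)\,\psi(a,\cdot)\,\rd a,
\eqnn
while $\phi$ is determined by the inhomogeneous linearization of \eqref{13} with source $-\alpha_2 u_\eta\psi$. Let $\Pi(a;\eta)$ denote the evolution operator associated with $\partial_a-\Delta_D-\beta_2 u_\eta$, and define the compact, strongly positive operator $\mk(\eta)\psi_0:=\int_0^{a_m}b_2(a)\,\Pi(a;\eta)\psi_0\,\rd a$ on $L_q(\Om)$. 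The $\psi$-problem reduces to $\xi\,\mk(\eta)\psi_0=\psi_0$. By Krein-Rutman, $r(\eta):=r(\mk(\eta))$ is a simple eigenvalue with positive eigenfunction. The normalization \eqref{12} forces $r(\eta)\to 1$ as $\eta\searrow 1$ (where $u_\eta\to 0$), while for $\eta>1$ the strict positivity of $u_\eta$ combined with the monotone dependence of $\mk(\eta)$ on $u_\eta$ (via comparison of the evolutions $\Pi(a;\eta)$) yields $r(\eta)>1$; a standard perturbation formula for simple eigenvalues of positive compact operators gives $r'(\eta)>0$ at least near $\eta=1^+$.

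Set $R:=\sup_{\eta>1}r(\eta)\in(1,\infty]$ and $\delta:=1/R\in[0,1)$. For each $\xi\in(\delta,1)$ the relation $r(\eta_1(\xi))=1/\xi$ admits a unique solution $\eta_1(\xi)>1$. At this point the kernel of $D_{(u,v)}F(\eta_1(\xi),u_{\eta_1(\xi)},0)$ is one-dimensional, spanned by a pair $(\phi_0,\psi_0)$ with $\psi_0>0$ the Krein-Rutman eigenfunction of $\mk(\eta_1(\xi))$ and $\phi_0$ obtained by inverting the scalar linearization of \eqref{13} at $u_{\eta_1(\xi)}$; this inversion is admissible since, as in the proof of Theorem~\ref{T1}, the scalar linearization of \eqref{13} is invertible along the smooth branch $\eta\mapsto u_\eta$. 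The codimension-one range and the transversality condition both reduce, via upper triangularity, to assertions about $\mk(\eta)$ at $\eta_1(\xi)$: simplicity of $1/\xi$ as an eigenvalue (Krein-Rutman) and $r'(\eta_1(\xi))\neq 0$ (strict monotonicity), respectively.

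Crandall-Rabinowitz then produces a smooth local curve
$\mathfrak{S}_4=\{(\eta(s),u_{\eta_1(\xi)}+s\phi_0+o(s),s\psi_0+o(s)):0<s<\varepsilon\}$
of solutions. Positivity of $v$ follows from $\psi_0>0$ and positivity of $u$ from $u_{\eta_1(\xi)}>0$ (Theorem~\ref{T1}) for small $s>0$. Bifurcation to the right, that is $\eta'(0)>0$, follows from the Lyapunov-Schmidt expansion by pairing the quadratic nonlinearity $(-\alpha_2 uv,-\beta_1 v^2+\beta_2 uv)$ against the adjoint Krein-Rutman eigenfunction and verifying the sign of the resulting scalar; this is the same mechanism responsible for right-bifurcation in Theorem~\ref{T2}. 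The most delicate step is exactly this sign computation, because the direction depends on a careful accounting of the competing contributions from the $uv$ terms through the adjoint pairing; the strict monotonicity of $r(\eta)$ (and hence the uniqueness of $\eta_1(\xi)$) also requires a careful Krein-Rutman comparison argument, but fits the framework above in a standard way.
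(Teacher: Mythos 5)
Your overall framework coincides with the paper's: Crandall--Rabinowitz at $\mathfrak{S}_1$, with $\delta$ and $\eta_1(\xi)$ defined through the spectral radius of $\hat{H}_{[-\beta_2 u_\eta]}$, the triangular structure of the linearization, and the Krein--Rutman eigenfunction spanning the kernel. The genuine gap is in the transversality step. Your reduction of transversality to $r'(\eta_1(\xi))\neq 0$ is correct in principle, but only after translating the \emph{non-constant} trivial branch $(\eta,u_\eta,0)$ to zero (e.g.\ via $w=u_\eta-u$, as the paper does); if you linearize the untranslated map $F(\eta,u,v)$, its mixed $\eta$-derivative does not see $\partial_\eta u_\eta$ at all and the relevant component vanishes. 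More importantly, you justify $r'(\eta_1(\xi))\neq 0$ only by ``strict monotonicity'' plus a perturbation formula valid ``near $\eta=1^+$''. Strict monotonicity of a smooth function does not force a nonvanishing derivative at the particular point $\eta_1(\xi)$, which can lie far from $1$. The missing ingredient is the strict positivity of $\partial_\eta u_\eta$ (Corollary~\ref{C2}, coming from $\partial_\eta u_\eta(0)=(1-\eta H_{[2\alpha_1u_\eta]})^{-1}U_\eta\in\mathrm{int}(\Wqqp)$): this is exactly what upgrades monotonicity of $\eta\mapsto r(\hat{H}_{[-\beta_2u_\eta]})$ to a strictly positive derivative, or, as the paper does, allows a direct verification of transversality --- assuming the mixed derivative applied to the kernel vector lies in the range leads to an identity of the form $\bigl(1-\xi\hat{H}_{[-\beta_2u_{\eta_1}]}\bigr)p(0)\in\mathrm{int}(\Wqqp)$ with $p(0)$ positive, which is impossible by \cite[Cor.12.4]{DanersKochMedina} because $\xi\, r\bigl(\hat{H}_{[-\beta_2u_{\eta_1}]}\bigr)=1$. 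Without Corollary~\ref{C2} (or an equivalent statement) your transversality verification is incomplete.

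Concerning the bifurcation direction: you propose a Lyapunov--Schmidt sign computation for $\eta'(0)$ by pairing the quadratic terms against the adjoint eigenfunctional, and you leave precisely that computation open. This is harder than necessary and not what the paper does: by Lemma~\ref{L5}(i), every solution with $u\in\Wqd$, $v\not\equiv0$ satisfies $\xi\ge 1/r(\hat{H}_{[-\beta_2u_\eta]})=\xi_0(\eta)$, and since $\eta\mapsto r(\hat{H}_{[-\beta_2u_\eta]})$ is strictly increasing (\eqref{46}) while $\xi=1/r(\hat{H}_{[-\beta_2u_{\eta_1}]})$ by \eqref{ooooa}, every coexistence solution on the local branch automatically has $\eta\ge\eta_1(\xi)$; no sign computation of the quadratic terms is needed. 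Two minor points: the existence of $\eta_1(\xi)$ for all $\xi\in(\delta,1)$ uses continuity of $\eta\mapsto r(\hat{H}_{[-\beta_2u_\eta]})$ (continuous dependence of the evolution operator together with continuity of the spectral radius on compact operators), which you should state; and the strict monotonicity you defer to a ``standard'' comparison is exactly Lemma~\ref{L1} combined with Corollary~\ref{C1}, so that part is fine once cited.
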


Again, $\mathfrak{S}_4$ consists exclusively of coexistence solutions. The precise values of $\delta$ and $\eta_1(\xi)>1$ will be given in \eqref{oooa} and \eqref{ooooa}, respectively.
Referring to Remark~\ref{R9} we conjecture that one can take $\delta=0$ in the statement.\\

The outline of the remainder of this paper is as follows: In Section~\ref{sec2} we first provide some auxiliary results including a comparison type lemma that are helpful for the study of semi-trivial solutions. The second part of Section~\ref{sec2} includes the proof of Theorem~\ref{T1}. Section~\ref{sec 44} is dedicated to the proof of Theorem~\ref{T2}, where $\xi$ is regarded as bifurcation parameter. The proofs of Theorems~\ref{T3} and \ref{T4} about the bifurcation results with respect to the parameter $\eta$ are given in Section~\ref{sec4}.

\section{Semi-Trivial Solutions: Proof of Theorem \ref{T1}}\label{sec2}

\subsection{Notations}

Given Banach spaces $E$ and $F$ we denote the set of bounded linear operators from $E$ into $F$ by $\ml(E,F)$. We set $\ml(E):=\ml(E,E)$, and we write $\mk(E)$ for the subspace of compact linear operators thereof. If $T\in\ml(E)$ we let $r(T)$ denote its spectral radius. Suppose now that $E$ is ordered by a convex cone $E^+$. We write $\phi\ge0$ if $\phi\in E^+$ and $\phi>0$ if $\phi\in E^+$ but $\phi\not=0$. A positive operator $T\in\ml_+(E)$ is an element $T$ of $\ml(E)$ such that $T(E^+)\subset E^+$, and we express this by $T\ge 0$. Then $\mk_+(E):=\ml_+(E)\cap\mk(E)$. Assume then further that the interior $\mathrm{int}(E^+)$ of $E^+$ is non-empty. The following equivalence turns out to be very useful in many circumstances: A point $\phi\in E^+$ is a quasi-interior point (i.e. $\langle \phi',\phi\rangle >0$ for all $\phi'$ in the dual $E'$ of $E$ with $\phi'\ge 0$ and $\phi'\not= 0$) if and only if $\phi\in \mathrm{int}(E^+)$. We call $T\in\ml_+(E)$ strongly positive provided $T\phi\in\mathrm{int}(E^+)$ for $\phi\in E^+\setminus\{0\}$. Recall that the Krein-Rutman theorem ensures (since $\mathrm{int}(E^+)\not=\emptyset$) that the spectral radius $r(T)$ of a strongly positive compact operator $T\in\mk(E)$ is positive and a simple eigenvalue with positive eigenvector and a strictly positive eigenfunctional. Moreover, $r(T)>0$ is the only eigenvalue of $T$ with a positive eigenvector. We refer to, e.g., \cite[App.A.2]{ClementEtAl} and \cite[Sect.12]{DanersKochMedina} for these facts.

Recall that $\Om$ is a bounded and smooth domain of $\R^n$. We fix $q\in (n+2,\infty)$ and set, for $\kappa>1/q$,
$$
 \Wqb^\kappa:=\Wqb^\kappa (\Om):=\{u\in W_q^\kappa; u=0\ \text{on}\ \partial\Om\}\ ,
 $$
where $W_q^\kappa:=W_q^\kappa (\Om)$ stand for the usual Sobolev-Slobodeckii spaces and values on the boundary are interpreted in the sense of traces. Then $\Wqb^{2-2/q}\hookrightarrow C^1(\bar{\Om})$ by the Sobolev embedding theorem, hence the interior of the positive cone $$\Wqqp:=\Wqb^{2-2/q}\cap L_q^+$$ is non-empty. Here, $L_q^+:=L_q^+(\Om)$ is the positive cone of $L_q:=L_q(\Om)$ consisting of functions which are nonnegative a.e. Let $J:=[0,a_m]$. We put
$$
\Lq:=L_q(J,L_q)\ ,\quad \Wq:=L_q(J,\Wqb^2)\cap W_q^1(J,L_q)\ ,
$$ 
and recall that 
\bqn\label{emb}
\Wq\hookrightarrow C\big(J,\Wqb^{2-2/q}\big)\hookrightarrow C\big(J,C^1(\bar{\Om})\big)
\eqn 
according to \cite[III.Thm.4.10.2]{LQPP}. Since $\Wq\subset W_q^1(J,L_q)\hookrightarrow C^{1-1/q}(J,L_q)$, the interpolation inequality in \cite[I.Thm.2.11.1]{LQPP} yields in fact
\bqn\label{embb}
\Wq\hookrightarrow C^{1-1/q-\vartheta}(J,\Wqb^{2\vartheta})\ ,\quad 0\le \vartheta\le 1-1/q\ .
\eqn
By \eqref{emb}, the trace $\gamma_0u:=u(0)$ defines an operator \mbox{$\gamma_0\in\ml (\Wq,\Wqb^{2-2/q})$}. We then say that an operator $A\in\ml (\Wqb^2,L_q)$ has {\it maximal $L_q$-regularity } (on $J$) provided that $$(\partial_a +A,\gamma_0)\in\ml (\Wq,\Lq\times \Wqb^{2-2/q})$$ is a toplinear isomorphism. 
For the positive cone of $\Lq$ we write $\Lq^+:=L_q^+(J,L_q)$ (i.e. those functions $u\in\Lq$ for which $u(a)$ belongs to $L_q^+$ for a.a. $a\in J$). We put $\Wq^+:=\Wq\cap L_q^+(\R^+,L_q)$ and use the notation $\Wqd:=\Wq^+\setminus\{0\}$. Note that $u\in\Wq^+$ implies $u(a)\ge 0$ on $\Om$ for $a\in J$ due to \eqref{emb}.

Let $\varphi_1$ denote the strongly positive eigenfunction to the principal eigenvalue $\lambda_1>0$ of $-\Delta_D$ with $\|\varphi_1\|_\infty=1$.
\\

\subsection{Preliminaries}

If $\varrho>0$ and $h\in C^\varrho (J,C(\bar{\Om}))$, then clearly $-\Delta_D + h\in  C^\varrho (J,\ml(\Wqb^2,L_q))$ and for $a\in J$ fixed, $\Delta_D- h(a)$ is the generator of an analytic semigroup on $L_q$ with domain $\Wqb^2$. Hence, \cite[II.Cor.4.4.1]{LQPP} ensures the existence of a parabolic evolution operator
$$
\Pi_{[h]}(a,\sigma)\ ,\quad 0\le \sigma\le a\le a_m\ ,
$$
associated with $-\Delta_D + h$. That is, given $\phi\in L_q$, $w:=\Pi_{[h]}(\cdot,\sigma)\phi$ is the unique strong solution to
$$
\partial_aw-\Delta_D w+h(a)w=0\ ,\quad a\in (\sigma,a_m]\ ,\quad w(\sigma,\cdot)=\phi\ .
$$
As $\Delta_D- h(a)$ is resolvent positive for each $a\in J$, \cite[II. Sect. 6]{LQPP} and \cite[Cor.13.6]{DanersKochMedina} entail in fact that $\Pi_{[h]}(a,\sigma)\in\ml(\Wqq)$ is strongly positive for $0\le \sigma< a\le a_m$.

In the following we put
$$
H_{[h]}:=\int_0^{a_m}b_1(a)\,\Pi_{[h]}(a,0)\,\rd a\ ,\qquad \hat{H}_{[h]}:=\int_0^{a_m}b_2(a)\,\Pi_{[h]}(a,0)\,\rd a \ .
$$
Consequently, \eqref{embb} warrants that we may write any solution $(u,v)\in\Wq\times\Wq$ to \eqref{7}-\eqref{10} equivalently in the form
\begin{align}
u(a)&=\Pi_{[\alpha_1 u+\alpha_2 v]}(a,0)\, u(0)\ ,\quad a\in J\ ,  &u(0)=\eta\, H_{[\alpha_1 u+\alpha_2 v]}\, u(0)\ ,\label{darst1}\\
 v(a)&=\Pi_{[\beta_1 v -\beta_2 u]}(a,0)\, v(0)\ ,\quad a\in J\ ,  &v(0)= \xi\, \hat{H}_{[\beta_1 v -\beta_2 u]}\, v(0)\ .\label{darst2}
\end{align}
In particular observe that $u$, $v$ are nonzero and nonnegative provided that $u(0)$, $v(0)$ are nonzero and nonnegative. The following information about the spectral radii of the operators $H_{[h]}$ and $\hat{H}_{[h]}$ will be of great importance:

\begin{lem}\label{L1}
For $h\in C^\varrho (J,C(\bar{\Om}))$ with $\varrho>0$, the operator $H_{[h]}\in\mk(\Wqb^{2-2/q})$ is strongly positive. In particular, the spectral radius $r(H_{[h]})>0$ is a simple eigenvalue with an eigenvector $B_{[h]}$ belonging to $\mathrm{int}(\Wqqp)$ and a strictly positive eigenfunctional $B_{[h]}'\in \big(\Wqq\big)'$. It is the only eigenvalue of $H_{[h]}$ with a positive eigenfunction. Moreover, if $h,g\in C^\varrho (J,C(\bar{\Om}))$ with $g\ge h$ but $g\not\equiv h$, then $r(H_{[g]})<r(H_{[h]})$. The same statements hold for $\hat{H}$.
\end{lem}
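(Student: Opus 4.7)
My plan is to verify the four assertions for $H_{[h]}$ in turn (the argument for $\hat H_{[h]}$ is identical, using $b_2$ in place of $b_1$, since \eqref{11} applies symmetrically). I would first establish compactness of $H_{[h]}$ on $\Wqq$. Because $-\Delta_D+h(a)$ generates an analytic semigroup on $L_q$ with domain $\Wqb^2$, the evolution operator $\Pi_{[h]}(a,0)$ maps $L_q$ into $\Wqb^2$ for $a>0$ with norm uniformly bounded on $[\delta,a_m]$ for each $\delta>0$, and $a\mapsto\Pi_{[h]}(a,0)$ is norm continuous on $(0,a_m]$ into $\ml(\Wqq)$; composing with the compact embedding $\Wqb^2\hookrightarrow\hookrightarrow\Wqq$ gives $\Pi_{[h]}(a,0)\in\mk(\Wqq)$ for $a>0$. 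Approximating $H_{[h]}$ in operator norm by $\int_\delta^{a_m}b_1(a)\Pi_{[h]}(a,0)\,\rd a$ (each truncation being compact as a norm-Riemann-integral of norm-continuous compact-operator-valued functions), with error controlled by $\|b_1\|_\infty\delta\sup_{a\in J}\|\Pi_{[h]}(a,0)\|_{\ml(\Wqq)}$, yields $H_{[h]}\in\mk(\Wqq)$.

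Next I would establish strong positivity. Since $-\Delta_D+h(a)$ is resolvent positive, the parabolic maximum principle (cf.~\cite[II.\S6]{LQPP} and~\cite[Cor.13.6]{DanersKochMedina}) asserts that $\Pi_{[h]}(a,0)\in\ml(\Wqq)$ is strongly positive for each $a>0$. Given $\phi\in\Wqq\cap L_q^+\setminus\{0\}$ and any positive nonzero $\chi\in(\Wqq)'$, I would compute
$$
\langle\chi,H_{[h]}\phi\rangle=\int_0^{a_m}b_1(a)\,\langle\chi,\Pi_{[h]}(a,0)\phi\rangle\,\rd a;
$$
by \eqref{11} the weight $b_1$ is strictly positive on a set of positive measure near $a_m$, and the integrand is strictly positive there, so $\langle\chi,H_{[h]}\phi\rangle>0$. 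Thus $H_{[h]}\phi$ is a quasi-interior point, hence an interior point of $\Wqqp$ by the equivalence recalled in Section~\ref{sec2}. The Krein--Rutman theorem then delivers the remaining spectral assertions: positivity and simplicity of $r(H_{[h]})$, the positive eigenvector $B_{[h]}\in\mathrm{int}(\Wqqp)$, the strictly positive eigenfunctional $B_{[h]}'$, and uniqueness of $r(H_{[h]})$ among eigenvalues admitting a positive eigenvector.

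For strict monotonicity, assume $g\ge h$ with $g\not\equiv h$. Setting $w(a):=\Pi_{[h]}(a,0)B_{[g]}-\Pi_{[g]}(a,0)B_{[g]}$, the variation-of-constants formula gives
$$
w(a)=\int_0^{a}\Pi_{[h]}(a,\sigma)\bigl[(g(\sigma)-h(\sigma))\Pi_{[g]}(\sigma,0)B_{[g]}\bigr]\,\rd\sigma.
$$
Because $B_{[g]}\in\mathrm{int}(\Wqqp)$ and $\Pi_{[g]}(\sigma,0)$ is strongly positive for $\sigma>0$, while $g-h\in C^\varrho(J,C(\bar\Om))$ is nontrivial, the integrand is positive and not identically zero on some interval $[\sigma_*,\sigma_*+\varepsilon]$; the strong positivity of $\Pi_{[h]}(a,\sigma)$ for $a>\sigma$ then propagates this, so $w(a)\in\mathrm{int}(\Wqqp)$ for $a$ close enough to $a_m$. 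Multiplying by $b_1$ and integrating (using \eqref{11} so that $b_1>0$ on the relevant range) yields $(H_{[h]}-H_{[g]})B_{[g]}\in\Wqqp\setminus\{0\}$. Pairing with $B_{[h]}'$ and using $(H_{[h]})'B_{[h]}'=r(H_{[h]})B_{[h]}'$,
$$
r(H_{[h]})\,\langle B_{[h]}',B_{[g]}\rangle=\langle B_{[h]}',H_{[h]}B_{[g]}\rangle>\langle B_{[h]}',H_{[g]}B_{[g]}\rangle=r(H_{[g]})\,\langle B_{[h]}',B_{[g]}\rangle,
$$
and dividing by $\langle B_{[h]}',B_{[g]}\rangle>0$ (strict positivity of $B_{[h]}'$) gives $r(H_{[g]})<r(H_{[h]})$.

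The delicate step is the last one: one must ensure that the strict inequality between the evolution operators survives integration against the weight $b_1$, whose support may lie entirely near $a_m$. The normalisation \eqref{11} is precisely what makes this compatible, since $b_1>0$ on a set of positive measure near $a_m$ guarantees that $b_1$ picks up exactly those ages at which the strongly positive smoothing of $\Pi_{[h]}(\cdot,\sigma)$ has already rendered $w(a)$ an interior point of $\Wqqp$.
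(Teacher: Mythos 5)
Your proposal is correct and follows essentially the same route as the paper: compactness and strong positivity of $H_{[h]}$ from the parabolic smoothing of $\Pi_{[h]}$, the compact embedding and the strong positivity of the evolution operator together with \eqref{11}, then Krein--Rutman for the spectral assertions, and finally the variation-of-constants comparison of $\Pi_{[h]}$ and $\Pi_{[g]}$ paired against $B_{[h]}'$ and $B_{[g]}$, which is exactly the paper's chain of inequalities. The only differences are cosmetic (you spell out the compactness instead of citing \cite[Lem.2.1]{WalkerAMPA}, and you apply the comparison directly to $B_{[g]}$, concluding only $(H_{[h]}-H_{[g]})B_{[g]}\in\Wqqp\setminus\{0\}$ rather than an interior point, which still suffices against the strictly positive functional).
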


\begin{proof} As $\Pi_{[h]}(a,\sigma)$ is strongly positive for $0\le \sigma< a\le a_m$, we obtain from standard regularizing effects of $\Pi_{[h]}$ and the compact embedding $\Wqb^{2\kappa}\dhr\Wqq$, $2\kappa>2-2/q$, that $H_{[h]}\in\mk(\Wqb^{2-2/q})$ is strongly positive (see \cite[Lem.2.1]{WalkerAMPA}). Due to the Krein-Rutman theorem (e.g. \cite[Thm.12.3]{DanersKochMedina}) it then remains to prove that $r(H_{[h]})$ is decreasing in $h$. \\
Let $h,g\in C^\varrho (J,C(\bar{\Om}))$ with $g\ge h$ but $g\not\equiv h$. Fix $\phi\in\Wqqp\setminus\{0\}$ and set
$$
z(a):=\Pi_{[h]}(a,0)\phi\ ,\quad w(a):=\Pi_{[g]}(a,0)\phi\ ,\qquad a\in J\ .
$$
Let $u:=z-w$. Then
$$
\partial_au-\Delta_Du+h(a)u=(g(a)-h(a))w(a)\ ,\quad u(0)=0\ ,
$$
so
\bqn\label{15}
u(a)=\int_0^a \Pi_{[h]}(a,\sigma)\,\big((g( \sigma)-h( \sigma))w( \sigma)\big)\, \rd \sigma\ge 0\ ,\quad a\in J\ .
\eqn
The strong positivity of $\Pi_{[g]}(\sigma,0)$ ensures $w(\sigma)\in\mathrm{int}(\Wqqp)$ for $\sigma\in (0,a_m]$. Since $g\not\equiv h$, there is some $\sigma_0\in J$ such that
$$
\Pi_{[h]}(a,\sigma)\,\big((g( \sigma)-h( \sigma))w( \sigma)\big)\in \mathrm{int}(\Wqqp)\ ,\quad a\in (\sigma,a_m]\ ,\quad \sigma\ \text{near}\ \sigma_0\ .
$$
This together with \eqref{11} and \eqref{15} readily imply
\bqnn
\big(H_{[h]}-H_{[g]}\big)\phi=\int_0^{a_m}b_1(a) u(a)\, \rd a\,\in \mathrm{int}(\Wqqp)\ ,\quad \phi\in\Wqqp\setminus\{0\}\ .
\eqnn
Letting $\langle \cdot, \cdot \rangle$ denote the duality pairing in $\Wqq$, we thus deduce
$$
r(H_{[h]})\langle B_{[h]}',B_{[g]}\rangle\, =\, \langle B_{[h]}', H_{[h]} B_{[g]}\rangle \, >\,  \langle B_{[h]}', H_{[g]} B_{[g]}\rangle\, =\, r(H_{[g]})\langle B_{[h]}',B_{[g]}\rangle \ .
$$
Therefore, $r(H_{[g]})<r(H_{[h]})$.
\end{proof}

The next lemma provides a comparison principle which turns out to be a key tool to handle the nonlocal initial conditions \eqref{9}, \eqref{10}. To shorten notation we set for the remainder of this section
$$
U:=\int_0^{a_m}b_1(a)\,u(a)\,\rd a\ ,\quad V:=\int_0^{a_m}b_1(a)\,v(a)\,\rd a\ ,
$$
for $u,v\in\Wq$ and we use this definition of capital letters also for other elements of $\Wq$.

\begin{lem}\label{L2}
Let $\eta>1$ and $f\in \Lq^+$. Suppose $u,v\in\Wqd$ satisfy either
$$\partial_au-\Delta_D u=-\alpha_1 u^2+f\ ,\quad u(0)\ge\eta U\ ,\qquad
\partial_av-\Delta_D v=-\alpha_1 v^2\ ,\quad v(0)=\eta V\ ,
$$
or
$$
\partial_au-\Delta_D u=-\alpha_1 u^2\ ,\quad u(0)=\eta U\ ,\qquad
\partial_av-\Delta_D v=-\alpha_1 v^2-f\ ,\quad v(0)\le \eta V\ .
$$
Then $u\ge v$.
\end{lem}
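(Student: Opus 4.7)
I would treat both cases uniformly by setting $w:=u-v$ and working with the common coefficient $h:=\alpha_1(u+v)$. Subtracting the two equations yields, in either case,
$$\partial_a w-\Delta_D w+hw=f,\qquad w(0)\ge\eta W,$$
where the nonlocal inequality at $a=0$ follows from $u(0)-v(0)\ge\eta U-\eta V=\eta W$ (using $W:=\int_0^{a_m}b_1 w\,\rd a$). By \eqref{embb}, $h$ lies in $C^\varrho(J,C(\bar\Om))$ for some $\varrho>0$, so the parabolic evolution operator $\Pi_{[h]}$ is at hand; the variation-of-constants formula, together with positivity of $\Pi_{[h]}$, gives
$$w(a)=\Pi_{[h]}(a,0)\,w(0)+F(a),\qquad F(a):=\int_0^a\Pi_{[h]}(a,\sigma)f(\sigma)\,\rd\sigma\,\ge\,0.$$
Integrating against $b_1$ produces $W=H_{[h]}w(0)+\tilde F$ with $\tilde F\ge 0$, and the initial inequality $w(0)\ge\eta W$ then collapses to the single operator inequality
\bqn\label{planineq}
(I-\eta H_{[h]})\,w(0)\,\ge\,\eta\tilde F\,\ge\,0.
\eqn

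The key step is to invert $I-\eta H_{[h]}$ \emph{positively}. To this end I would identify $1/\eta$ as a spectral radius using the unperturbed component. In case~1, $v\in\Wqd$ satisfies $v(a)=\Pi_{[\alpha_1 v]}(a,0)v(0)$ with $v(0)=\eta V$, so $v(0)\in\Wqqp\setminus\{0\}$ is a positive eigenvector of $H_{[\alpha_1 v]}$ with eigenvalue $1/\eta$; by Lemma~\ref{L1} this forces $r(H_{[\alpha_1 v]})=1/\eta$. In case~2 the analogous computation with $u$ yields $r(H_{[\alpha_1 u]})=1/\eta$. Since in each case the other component is a nontrivial element of $\Wq^+$, the coefficient $h$ strictly dominates $\alpha_1 v$ (resp.\ $\alpha_1 u$), and the strict monotonicity statement in Lemma~\ref{L1} upgrades this to $r(H_{[h]})<1/\eta$.

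With $\eta\,r(H_{[h]})<1$ in hand, the Neumann series $\sum_{k\ge 0}(\eta H_{[h]})^k$ converges in $\ml(\Wqq)$ to a positive inverse of $I-\eta H_{[h]}$. Applying it to \eqref{planineq} yields $w(0)\ge 0$, and then $w=\Pi_{[h]}(\cdot,0)w(0)+F\ge 0$ by positivity of $\Pi_{[h]}$ and nonnegativity of $F$. The main obstacle I expect is the middle paragraph: the Krein-Rutman uniqueness of the positive eigenvalue from Lemma~\ref{L1} is what converts the explicit eigenvector $v(0)$ (resp.\ $u(0)$) into the spectral identity $r=1/\eta$, and it is the nontriviality of the \emph{other} component ($u$ in case~1, $v$ in case~2) that delivers the strict dominance $h\not\equiv\alpha_1 v$ (resp.\ $h\not\equiv\alpha_1 u$) needed to pass from a weak to a strict inequality and thus to a convergent Neumann series.
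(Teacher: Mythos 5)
Your proposal is correct and follows essentially the same route as the paper's proof: set $w=u-v$, reduce the nonlocal condition to $(1-\eta H_{[\alpha_1(u+v)]})w(0)\ge 0$, identify $\eta\,r(H_{[\alpha_1 v]})=1$ (resp. $\eta\,r(H_{[\alpha_1 u]})=1$) from the unperturbed component via Lemma~\ref{L1}, use the strict monotonicity of the spectral radius to get $\eta\,r(H_{[\alpha_1(u+v)]})<1$, and invert positively to conclude $w(0)\ge 0$ and then $w\ge 0$. The only cosmetic differences are that you keep the exact variation-of-constants term $F\ge 0$ where the paper passes directly to an inequality, and you justify the positive inverse by a Neumann series where the paper cites \cite[Eq.(12.8)]{DanersKochMedina}.
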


\begin{proof}
Note that for $z:=u-v$ we have
$$
\partial_a z-\Delta_D z+\alpha_1(u+v)z=f\ge 0\ ,\quad z(0)\ge \eta Z\ ,
$$
with $u+v\in\Wq$. Thus
\bqn\label{6a}
z(a)\ge \Pi_{[\alpha_1(u+v)]}(a,0)\, z(0)\ ,\quad a\in J\ ,
\eqn
and
$$
z(0)\,\ge\,\eta Z\ge \eta\int_0^{a_m}b_1(a)\,\Pi_{[\alpha_1(u+v)]}(a,0)\,\rd a\, z(0)\, =\,\eta H_{[\alpha_1(u+v)]}\, z(0)\ ,
$$
that is,
\bqn\label{7a}
\big(1-\eta H_{[\alpha_1 (u+ v)]}\big)\, z(0)\,\ge\, 0\ .
\eqn
Suppose that the first alternative of the statement holds. Then
$$
v(a)=\Pi_{[\alpha_1 v]}(a,0)\,v(0)\ ,\quad a\in J\ ,\qquad v(0)\,=\,\eta V=\eta H_{[\alpha_1 v]}\, v(0)\ ,
$$
hence $v(0)\in\mathrm{int}(\Wqqp)$ since $v\in\Wqd$. By Lemma \ref{L1}, this implies $\eta r(H_{[\alpha_1 v]})=1$. Also, due to Lemma \ref{L1} and $u\in\Wqd$, $$r(H_{[\alpha_1 v]})>r(H_{[\alpha_1 (u+ v)]})\ ,$$ whence $1>\eta r(H_{[\alpha_1 (u+ v)]})$ so that $\big(1-\eta H_{[\alpha_1 (u+ v)]}\big)^{-1}\ge 0$ (e.g. see \cite[Eq.(12.8)]{DanersKochMedina}). Recalling \eqref{7a}, it follows $z(0)\ge 0$ and then $z(a)=u(a)-v(a)\ge 0$ for $a\in J$ owing to \eqref{6a}.
If the second alternative of the statement holds, we conclude analogously.
\end{proof}

We now focus on problems of the form
\bqn\label{13a}
\partial_a u-\Delta_D u=-\alpha_1 u^2\ ,\quad u(0,\cdot)=\eta U\ .
\eqn
Observe that the comparison principle of Lemma~\ref{L2} warrants uniqueness of solutions:

\begin{cor}\label{C1}
For $\eta>1$ there is at most one solution $u=u_\eta\in\Wqd$ to \eqref{13a}. If $u_{\eta_1}, u_{\eta_2}\in\Wqd$ are solutions to \eqref{13a} with $\eta_1>\eta_2$, then $u_{\eta_1}\ge u_{\eta_2}$ with $u_{\eta_1}\not\equiv u_{\eta_2}$.
\end{cor}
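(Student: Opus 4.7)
The plan is to deduce both assertions directly from the comparison principle of Lemma~\ref{L2}, invoked in the zero-source case $f\equiv 0 \in \Lq^+$.

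For the uniqueness part, suppose $u_1,u_2\in\Wqd$ both solve \eqref{13a} for the same $\eta>1$. Setting $u:=u_1$, $v:=u_2$, and $f:=0$, the first alternative of Lemma~\ref{L2} applies: $u$ satisfies $\partial_a u-\Delta_D u=-\alpha_1 u^2$ with $u(0)=\eta U_1\ge\eta U_1$ (trivially), while $v$ satisfies $\partial_a v-\Delta_D v=-\alpha_1 v^2$ with $v(0)=\eta V=\eta U_2$. Hence $u_1\ge u_2$. Reversing the roles of $u_1$ and $u_2$ yields $u_2\ge u_1$, so $u_1\equiv u_2$.

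For the monotonicity claim, let $u_1:=u_{\eta_1}$ and $u_2:=u_{\eta_2}$ with $\eta_1>\eta_2>1$. The key observation is to apply Lemma~\ref{L2} with the parameter value $\eta:=\eta_2$ (\emph{not} $\eta_1$), $u:=u_1$, $v:=u_2$, and $f:=0$. Since $u_1\in\Wqd$, one has $U_1=\int_0^{a_m}b_1(a)u_1(a)\,\rd a\in\Wqqp$, so $u_1(0)=\eta_1 U_1\ge\eta_2 U_1$; moreover $u_2$ satisfies $u_2(0)=\eta_2 U_2=\eta_2 V$. The first alternative of Lemma~\ref{L2} then yields $u_{\eta_1}\ge u_{\eta_2}$.

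To upgrade the inequality to $u_{\eta_1}\not\equiv u_{\eta_2}$, assume for contradiction that $u_{\eta_1}\equiv u_{\eta_2}=:u$. Then both boundary conditions force $\eta_1 U=u(0)=\eta_2 U$, so $(\eta_1-\eta_2)U=0$ and hence $U=0$, giving $u(0)=0$. The representation \eqref{darst1} with $v\equiv 0$ becomes $u(a)=\Pi_{[\alpha_1 u]}(a,0)\,u(0)=0$ for every $a\in J$, contradicting $u\in\Wqd$. This rules out equality and completes the proof. The only mild subtlety is the correct choice of $\eta=\eta_2$ in the monotonicity step, which matches the $\eta$ appearing in both rows of Lemma~\ref{L2}'s hypothesis while exploiting $\eta_1\ge\eta_2$ to obtain the required inequality for the boundary trace.
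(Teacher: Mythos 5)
Your proposal is correct and follows exactly the route the paper intends: the paper states the corollary as an immediate consequence of the comparison principle of Lemma~\ref{L2}, and you supply the details faithfully — taking $f\equiv 0$ and symmetrizing for uniqueness, choosing $\eta:=\eta_2$ with $u_{\eta_1}(0)=\eta_1U_1\ge\eta_2U_1$ for the monotonicity, and using the representation \eqref{darst1} to exclude $u_{\eta_1}\equiv u_{\eta_2}$.
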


The next proposition provides a global branch of positive solutions to \eqref{13} and is the basis for Theorem~\ref{T1}.

\begin{prop}\label{P1}
Problem \eqref{13a}
admits an unbounded connected set of solutions
$$
\mathcal{U}:=\{(\eta,u_\eta)\,;\, \eta\in I\}\subset (1,\infty)\times\Wqd
$$
emanating from $(1,0)$, where $I$ is an interval in $(1,\infty)$ with left endpoint $1$. There is no solution $(\eta,u_\eta)$ in $\R^+\times\Wqd$ to \eqref{13a} if~$\eta\le 1$.
\end{prop}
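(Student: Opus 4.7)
I begin with the easier half. Any $(\eta,u)\in\R^+\times\Wqd$ solving \eqref{13a} satisfies, via \eqref{darst1} specialized to $v\equiv 0$, the identity $u(0)=\eta H_{[\alpha_1 u]}u(0)$ with $u(0)\in\Wqqp\setminus\{0\}$. Thus $1/\eta$ is an eigenvalue of $H_{[\alpha_1 u]}$ admitting a positive eigenvector, and Lemma~\ref{L1} forces $1/\eta=r(H_{[\alpha_1 u]})$. The normalization \eqref{12} combined with $-\Delta_D\vp_1=\lambda_1\vp_1$ gives $H_{[0]}\vp_1=\vp_1$, whence $r(H_{[0]})=1$ by the uniqueness clause of Krein--Rutman in Lemma~\ref{L1}. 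Since $\alpha_1 u\ge 0$ and $u\not\equiv 0$, strict monotonicity from Lemma~\ref{L1} yields $r(H_{[\alpha_1 u]})<1$, i.e.~$\eta>1$.

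\textbf{Reformulation and local bifurcation.} For existence I recast \eqref{13a} as a compact perturbation of the identity on $\Wq$, using the maximal regularity isomorphism $(\partial_a-\Delta_D,\gamma_0):\Wq\to\Lq\times\Wqq$. Setting
\[
K_0 u:=(\partial_a-\Delta_D,\gamma_0)^{-1}\bigl(0,U\bigr),\qquad R(u):=(\partial_a-\Delta_D,\gamma_0)^{-1}\bigl(\alpha_1 u^2,0\bigr),
\]
the problem becomes $F(\eta,u):=u-\eta K_0 u+R(u)=0$. The embedding $\Wq\hookrightarrow C(J\times\bar\Om)$ makes $R$ smooth with $\|R(u)\|_{\Wq}=O(\|u\|_{\Wq}^2)$, while compactness of $K_0$ on $\Wq$ follows from the smoothing of $\Pi_{[0]}$ together with the compact embedding $\Wqb^{2\kappa}\dhr\Wqq$, $2\kappa>2-2/q$, essentially as in Lemma~\ref{L1}. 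Solving $v=K_0 v$ reduces to $V=H_{[0]}V$ in $\Wqq$, and the Krein--Rutman identification $r(H_{[0]})=1$ gives $\ker(I-K_0)=\spann\{v_0\}$ with $v_0(a):=e^{-\lambda_1 a}\vp_1$; the correspondence $v\mapsto V$ between $\ker(I-K_0)^j$ and $\ker(I-H_{[0]})^j$ transfers algebraic simplicity of the eigenvalue $1$ from $H_{[0]}$ to $K_0$. The Crandall--Rabinowitz transversality condition is thus satisfied, and yields a smooth local curve $\{(\eta(s),u(s)):|s|<\ve\}$ with $\eta(0)=1$ and $u(s)=sv_0+o(s)$ in $\Wq$. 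For small $s>0$, positivity of $v_0$ places $u(s)$ in $\Wqd$, and the nonexistence part forces $\eta(s)>1$.

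\textbf{Global continuation.} I then apply the Rabinowitz global alternative together with its unilateral refinement (cf.~\cite{LopezGomezChapman,Rabinowitz}) to extend this local curve to a connected set $\mathcal{U}$ of positive solutions, satisfying: either $\mathcal{U}$ is unbounded in $\R\times\Wq$, or $\mathcal{U}$ meets the trivial branch at a second point. Closedness of $\Wq^+$ and strong positivity of $\Pi_{[\alpha_1 u_*]}$ (Lemma~\ref{L1}) force any limit $(\eta_*,u_*)\in\overline{\mathcal{U}}$ with $u_*\not\equiv 0$ to satisfy $u_*(0)=\eta_*\int_0^{a_m}b_1(a)u_*(a)\,\rd a\in\Wqqp\setminus\{0\}$ and hence $u_*\in\Wqd$ with $\eta_*>1$. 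The only other possibility, $u_*\equiv 0$, corresponds to a return to the trivial branch: rescaling $u_n/\|u_n\|$ and using compactness of $K_0$ yields a positive eigenvector of $H_{[0]}$, so by Krein--Rutman the return point can only be $(1,0)$ itself, which the Crandall--Rabinowitz local uniqueness excludes as a nontrivial loop. Hence $\mathcal{U}$ is unbounded in $(1,\infty)\times\Wqd$, and the uniqueness in Corollary~\ref{C1} identifies $\mathcal{U}$ with the graph $\{(\eta,u_\eta):\eta\in I\}$ over an interval $I\subset(1,\infty)$ whose left endpoint is $1$.

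\textbf{Main obstacle.} The delicate step will be the transfer of Krein--Rutman spectral data from $H_{[0]}$ on $\Wqq$ to $K_0$ on $\Wq$ (algebraic simplicity and Crandall--Rabinowitz transversality) together with the unilateral extraction, which must ensure that $\mathcal{U}$ genuinely emanates as an unbounded positive sub-continuum rather than collapsing into a small loop back at $(1,0)$.
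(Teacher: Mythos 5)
Your nonexistence argument for $\eta\le 1$ is correct, and is a legitimate variant of the paper's: instead of integrating against $\varphi_1$ and invoking \eqref{12} directly, you read off from $u(0)=\eta H_{[\alpha_1 u]}u(0)$ with $u(0)\in\Wqqp\setminus\{0\}$ that $1/\eta=r(H_{[\alpha_1 u]})<r(H_{[0]})=1$ by Lemma~\ref{L1}. Your local analysis at $(1,0)$ (kernel of $I-K_0$ spanned by $e^{-\lambda_1 a}\varphi_1$, transversality transferred from the simplicity of the eigenvalue $r(H_{[0]})=1$ by reducing $(I-K_0)w=v_0$ to $(1-H_{[0]})w(0)=\varphi_1$) is also sound and mirrors computations the paper performs elsewhere (Lemma~\ref{L8}, the proof of Theorem~\ref{T4}). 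Note, however, that for Proposition~\ref{P1} the paper does not redo any of this: it verifies the hypotheses (compact embeddings \eqref{E}, smoothness of $u\mapsto\alpha_1u$, maximal $L_q$-regularity of $-\Delta_D$, and $r(H_{[0]})=1$) of the abstract global bifurcation result \cite[Prop.~2.5, Thm.~2.7]{WalkerJDE}, which directly yields an unbounded connected set of \emph{positive} solutions emanating from $(1,0)$.

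The genuine gap is in your global continuation step, precisely the point you flag as ``the main obstacle'' but do not resolve. The classical Rabinowitz alternative produces a continuum of nontrivial solutions in $\R\times\Wq$ that need not be nonnegative away from $(1,0)$, so its unboundedness does not by itself give an unbounded set in $(1,\infty)\times\Wqd$; what is needed is the unilateral subcontinuum (e.g.\ \cite[Thm.~6.4.3]{LopezGomezChapman}) together with a propagation-of-positivity argument along it. Two things are missing. First, the unilateral theorem carries a third alternative --- the subcontinuum may contain a point whose deviation lies in $\mathrm{rg}(1-K_0)\setminus\{0\}$ --- which you never exclude; the paper shows in the analogous Lemma~\ref{L9} how such an alternative is ruled out using strong positivity and \cite[Cor.~12.4]{DanersKochMedina}. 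Second, your exclusion of a return to the trivial line is circular and incomplete: the rescaling argument forcing $\eta_*=1$ applies only to sequences of \emph{nonnegative} solutions, hence presupposes the positivity you are trying to propagate, and ``Crandall--Rabinowitz local uniqueness'' does not prevent the global continuum from re-entering a neighborhood of $(1,0)$ (it could return along the $s<0$ side of the local curve, say); handling this requires the open--closed argument on the unilateral subcontinuum, not local uniqueness. In short, the skeleton is the standard one and can be completed, but as written the positivity of the global branch --- the substance of the proposition --- is asserted rather than proved, whereas the paper obtains it wholesale from \cite{WalkerJDE}.
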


\begin{proof}
Let $\A(u):=-\Delta_D+\alpha_1 u$ and  $\A_*(u):=\A(u)-\A(0)=\alpha_1 u$. Given $\nu\in [0,1)$ and $r\in [0,1-\nu)$, it follows from \cite[Thm.1.1]{AmannGlasnik00} that
$\Wq\dhr W_q^r(J,\Wqb^{2\nu})$, where $\dhr$ stands for a compact embedding. Fix $\sigma, \nu, \gamma$, and $s$ such that $1/q<\sigma<1-\nu<1$ and $0<s<1-\gamma<n/2q$. Then, by Sobolev's embedding theorem,
\bqn\label{E}
\Wq\dhr W_q^\sigma(J, W_q^{2\nu})\hookrightarrow L_\infty(J,W_q^{2\nu})\ ,\qquad \Wq\dhr W_q^s(J, W_q^{2\gamma})\hookrightarrow L_q(J,C(\bar{\Om}))\ ,
\eqn
from which we easily deduce that
$$
\A_*\in C^1\big(W_q^\sigma(J, W_q^{2\nu}),\ml(W_q^s(J, W_q^{2\gamma}),\Lq)\big)\ .
$$
Observe that $\A(0)=-\Delta_D$ has maximal $L_q$-regularity and that assumption \eqref{12} implies
\mbox{$H_{[0]}\varphi_1=\varphi_1$}
so that $r(H_{[0]})=1$ by Lemma~\ref{L1}. We are therefore in a position to apply \cite[Prop.2.5, Thm.2.7]{WalkerJDE} and conclude the existence of an unbounded connected branch $\mathcal{U}$ of solutions in $(0,\infty)\times\Wqd$ emanating from $(1,0)$. If $(\eta,u)$ is a solution to \eqref{13a} with $u\in \Wqd$, then
$z'(a)\le-\lambda_1z(a)$ for $a\in J$, where
$$
z(a):=\int_\Om \varphi_1\,u(a)\,\rd x\ ,\quad a\in J\ ,
$$
and thus
$$
z(0)=\eta\int_0^{a_m} b_1(a)\int_\Om \varphi_1 u(a)\,\rd a\,\rd x\le \eta\int_0^{a_m}b_1(a)e^{-\lambda_1 a}\,\rd a\, z(0)\ .
$$
Since $u\in \Wqd$, this inequality is actually strict and $u(0)>0$ by \eqref{darst1} (with $v\equiv 0$). Therefore, we have $z(0)>0$ and so $\eta> 1$ by the above inequality and \eqref{12}. This proves the assertion.
\end{proof}

\begin{rem}\label{RR}
Using \eqref{darst1} we have for $(\eta,u_\eta)\in\mathcal{U}$ that 
$$
u_\eta(a)=\Pi_{[\alpha_1u_\eta]}(a,0)\,u_\eta(0)\ ,\quad a\in J\ ,\qquad u_\eta(0)=\eta U_\eta =\eta\, H_{[\alpha_1u_\eta]} u_\eta(0)\ .
$$
Since $u_\eta(0)\in\Wqqp$ and $u_\eta(0)\not=0$, this implies
\bqn\label{sp}
r\big(\eta H_{[\alpha_1u_\eta]}\big)
=1
\eqn
according to Lemma~\ref{L1}.
\end{rem}

Classical regularity theory for the heat equation ensure that $u_\eta$ is smooth both with respect to $a$ and $x$ for $(\eta,u_\eta)\in\mathcal{U}$. To conclude Theorem~\ref{T1} it remains to show that the branch $\mathcal{U}$ is unbounded with respect to the parameter~$\eta$. We will need some further auxiliary results. First, we give lower and upper bounds for solutions to \eqref{13a}.

\begin{lem}\label{L3}
If $(\eta,u_\eta)\in\mathcal{U}$, then
$$
u_\eta(a)\ge\frac{\lambda_1}{\alpha_1}\frac{\eta-1}{\eta(e^{\lambda_1 a}-1)+1-e^{-\lambda_1 (a_m-a)}}\,\varphi_1\quad \text{on}\ \,\Om ,\quad a\in J\ .
$$
Moreover, there is $\kappa>0$ such that
$$
\|u_\eta(a)\|_\infty\le \frac{1}{\alpha_1 a+(\kappa\eta^2)^{-1}}\ ,\quad a\in J\ ,
$$
for $(\eta,u_\eta)\in\mathcal{U}$.
\end{lem}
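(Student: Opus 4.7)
For the lower bound, my plan is to exhibit an explicit subsolution of the form $\underline{u}(a,x) := f(a)\varphi_1(x)$ and invoke the comparison principle of Lemma~\ref{L2}. I would choose $f$ to solve the Bernoulli ODE $f' + \lambda_1 f + \alpha_1 f^2 = 0$; equivalently, $1/f$ satisfies the linear equation $(1/f)' = \lambda_1(1/f) + \alpha_1$, whose general solution is affine-exponential, and fixing the constant of integration appropriately reproduces the denominator in the statement. Using $\varphi_1^2 \le \varphi_1$ (since $\|\varphi_1\|_\infty = 1$) together with $f \ge 0$, a short computation gives
\[
\partial_a \underline{u} - \Delta_D \underline{u} + \alpha_1 \underline{u}^2 \le (f' + \lambda_1 f + \alpha_1 f^2)\varphi_1 = 0,
\]
so $\underline{u} \in \Wqd$ satisfies a PDE of the form $\partial_a \underline{u} - \Delta_D \underline{u} = -\alpha_1 \underline{u}^2 - g$ with some source $g \in \Lq^+$.

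The main obstacle is then to verify the nonlocal inequality required for Lemma~\ref{L2}, namely $f(0) \le \eta \int_0^{a_m} b_1(a)f(a)\,\rd a$. The idea is to exploit~\eqref{12}: the measure $\rd\nu(a) := b_1(a) e^{-\lambda_1 a}\,\rd a$ is a probability measure on $J$, and from the explicit form $e^{\lambda_1 a} f(a) = 1/(C - (\alpha_1/\lambda_1)e^{-\lambda_1 a})$ (with the corresponding constant $C > 0$) one sees that $a \mapsto e^{\lambda_1 a} f(a)$ is strictly decreasing, hence minimized at $a = a_m$. Consequently
\[
\eta \int_0^{a_m} b_1(a) f(a)\,\rd a = \eta \int_0^{a_m} e^{\lambda_1 a} f(a)\,\rd\nu(a) \ge \eta\, e^{\lambda_1 a_m} f(a_m),
\]
and a direct calculation with the explicit $f$ shows that the right-hand side equals $f(0)$ exactly (this is the extreme case where $b_1$ would be a point mass at $a_m$; any other admissible profile produces a strict inequality). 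Applying Lemma~\ref{L2} in its second alternative with $u = u_\eta$ and $v = \underline{u}$ then yields $u_\eta \ge \underline{u}$.

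For the upper bound, I set $h(a) := \|u_\eta(a)\|_\infty$. Since $u_\eta$ is smooth by classical parabolic regularity, vanishes on $\partial\Omega$, and is strictly positive on $\Omega$ for every $a \in J$ (by the strong positivity of $\Pi_{[\alpha_1 u_\eta]}(a,0)$ and Remark~\ref{RR}), the supremum is attained at some interior point $x_a$ where $\Delta_D u_\eta(a, x_a) \le 0$. A standard envelope argument then gives $h'(a) \le -\alpha_1 h(a)^2$ almost everywhere on $J$, which integrates to
\[
h(a) \le \frac{1}{1/h(0) + \alpha_1 a}, \qquad a \in J.
\]
The remaining task is to bound $h(0)$ by a multiple of $\eta^2$. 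Substituting the above estimate into the nonlocal initial condition and using $b_1 \in L_\infty(J)$ yields
\[
h(0) \le \eta \int_0^{a_m} b_1(a) h(a)\,\rd a \le \frac{\eta\|b_1\|_\infty}{\alpha_1} \log\bigl(1 + \alpha_1 a_m h(0)\bigr).
\]
The only mild difficulty is converting this transcendental inequality into a power-type bound; here I would apply the elementary estimate $\log(1+x) \le \sqrt{x}$ for $x \ge 0$, which immediately gives $h(0) \le (a_m\|b_1\|_\infty^2/\alpha_1)\eta^2$. Hence $\kappa := a_m\|b_1\|_\infty^2/\alpha_1$ works, and combining with the previous inequality for $h(a)$ yields the claim.
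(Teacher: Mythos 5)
Your proposal is correct and follows essentially the same route as the paper: the same explicit subsolution $f\varphi_1$ with $f'+\lambda_1 f=-\alpha_1 f^2$ fed into the comparison principle of Lemma~\ref{L2} for the lower bound, and the same barrier $\psi(a)=\bigl(\alpha_1 a+\|u_\eta(0)\|_\infty^{-1}\bigr)^{-1}$ combined with the nonlocal initial condition and the logarithmic estimate for the upper bound. The only cosmetic differences are that you check the nonlocal inequality $f(0)\le\eta\int_0^{a_m}b_1 f\,\rd a$ via monotonicity of $a\mapsto e^{\lambda_1 a}f(a)$ rather than the paper's pointwise inequality \eqref{12a}, obtain $\|u_\eta(a)\|_\infty\le\psi(a)$ by an envelope argument for $h(a)=\|u_\eta(a)\|_\infty$ instead of the parabolic maximum principle, and make the paper's ``easily deduce'' step explicit through $\log(1+x)\le\sqrt{x}$ --- all of which are equivalent and correct.
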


\begin{proof}
Let $(\eta,u_\eta)\in\mathcal{U}$ be fixed and put
$$
c_0:=\frac{\alpha_1}{\lambda_1}\frac{\eta-e^{-\lambda_1 a_m}}{\eta-1} >\frac{\alpha_1}{\lambda_1}\ .
$$
Then
\bqn\label{12a}
\frac{c_0\lambda_1-\alpha_1}{c_0\lambda_1e^{\lambda_1 a}-\alpha_1}\,\ge\ \frac{1}{\eta e^{\lambda_1 a}}\ ,\qquad c_0\lambda_1e^{\lambda_1 a}-\alpha_1\ge c_0\lambda_1-\alpha_1>0\ ,
\eqn
for $a\in J$. Thus, $z:=f\varphi_1\in\Wq^+$, where
$$
f(a):=\frac{\lambda_1}{c_0\lambda_1e^{\lambda_1 a}-\alpha_1}\ ,\quad a \in J\ ,
$$
solves the ode
$f'+\lambda_1f=-\alpha_1 f^2$. Since $z=f\varphi_1\le f$, we obtain
$$
\partial_az-\Delta_D z=-\alpha_1 z^2-F\ ,\quad F:= \alpha_1 u(f-z)\ge 0\ .
$$
Also observe that, by \eqref{12} and \eqref{12a},
$$
1=\eta\int_0^{a_m}b_1(a)\frac{1}{\eta e^{\lambda_1 a}}\,\rd a\le \eta \int_0^{a_m}b_1(a)\frac{c_0\lambda_1-\alpha_1}{c_0\lambda_1e^{\lambda_1 a}-\alpha_1}\,\rd a\ ,
$$
whence
$$
z(0)=\frac{\lambda_1}{c_0\lambda_1-\alpha_1}\varphi_1 \le \eta \int_0^{a_m}b_1(a)\frac{\lambda_1}{c_0\lambda_1e^{\lambda_1 a}-\alpha_1}\,\rd a\,\varphi_1\, =\,\eta Z\ .
$$
Now the comparison principle of Lemma~\ref{L2} implies $u_\eta\ge z$ and the lower bound on $u_\eta$ follows from the definition of $z$.

For the second assertion set
$$
\psi(a):=\frac{1}{\alpha_1 a+\|u_\eta(0)\|_\infty^{-1}}\ ,\quad a\in J\ ,
$$
for $(\eta,u_\eta)\in\mathcal{U}$ given. Then 
$$
\psi'=-\alpha_1\psi^2\ ,\quad \psi(0)=\|u_\eta(0)\|_\infty\ge u(0)\quad\text{on}\ \Om\ .
$$
Let $w:=\psi-u_\eta$. Clearly, $w\in C^{1,2}(J\times\bar{\Om})$ and
\begin{gather*}
\partial_aw-\Delta_Dw=-\alpha_1(\psi+u_\eta)w\quad\text{on}\ J\times\Om\ ,\\
w(0,\cdot)\ge 0\quad\text{on}\ \Om\ ,\qquad w(a,\cdot)=\psi(a)>0 \quad\text{on}\ \partial\Om\ ,\quad a\in J\ .
\end{gather*}
Hence, the parabolic maximum principle (e.g. see \cite[Thm.13.5]{DanersKochMedina}) yields $w\ge 0$ on $J\times\bar{\Om}$, that is,
\bqn\label{155}
u_\eta(a,x)\le \psi(a)\ ,\quad (a,x)\in J\times\bar{\Om}\ .
\eqn
Using this we derive from the initial condition $u_\eta(0)=\eta U_\eta$ that
$$
\|u_\eta(0)\|_\infty\,\le\,\eta\|b_1\|_\infty\int_0^{a_m}\big(\alpha_1 a+\|u_\eta(0)\|_\infty^{-1}\big)^{-1}\,\rd a\,=\,\frac{\eta\|b_1\|_\infty}{\alpha_1}\,\log \big(\alpha_1 a_m\|u_\eta(0)\|_\infty+1\big)
$$
from which we easily deduce $\|u_\eta(0)\|_\infty\le (\kappa \eta)^2$ for some $\kappa>0$. Combining this with estimate \eqref{155}, we conclude also the upper bound on $u_\eta$.
\end{proof}

\subsection{Proof of Theorem~\ref{T1}}

To finish the proof of Theorem~\ref{T1} note first that, owing to Proposition~\ref{P1}, problem \eqref{13a} does not admit a solution $u$ in $\Wqd$ if $\eta\le 1$. Also recall that, again by Proposition~\ref{P1}, there is an unbounded connected branch $\mathcal{U}$ of solutions to \eqref{13a} and that uniqueness of solutions is provided by Corollary~\ref{C1}. In particular, there are $(\eta_j,u_{\eta_j})\in\mathcal{U}$ with $\|(\eta_j,u_{\eta_j})\|_{\R\times \Wq}\rightarrow\infty$ as $j\rightarrow\infty$. Since $\mathcal{U}$ is connected, the existence of a unique solution $u_\eta\in \Wqd$ to \eqref{13a} for each value $\eta>1$ will be established provided we can show that $\eta_j \rightarrow\infty$. Suppose otherwise, i.e. let $\eta_j\le\eta_*$ for some $\eta_*>1$. Then necessarily $\|u_{\eta_j}\|_{\Wq}\rightarrow\infty$. However, Lemma~\ref{L3} implies
\bqn\label{2a}
\|u_{\eta_j}(a)\|_\infty\le\kappa\eta_*^2\ ,\quad a\in J\ ,\quad j\in\N\ ,
\eqn
for some $\kappa>0$. The positivity of $u_{\eta_j}$ and \eqref{13a} ensure $0\le u_{\eta_j}(a)\le u_{\eta_j}(0)$ on $\Om$ for $a\in J$, and thus
$$
\|u_{\eta_j}^2\|_{\Lq}^q\,=\, \int_0^{a_m}\int_\Om(u_{\eta_j}(a))^{2q}\,\rd x\,\rd a\,\le\, a_m\,\|u_{\eta_j}(0)\|_{L_{2q}}^{2q}\ ,\quad j\in\N\ .
$$
Using the property of maximal $L_q$-regularity for $-\Delta_D$ in \eqref{13a}, it follows that
\bqn\label{2aa}
\|u_{\eta_j}\|_{\Wq}\,\le\, c\,\big(\|\alpha_1u_{\eta_j}^2\|_{\Lq}+\|u_{\eta_j}(0)\|_{\Wqb^{2-2/q}}\big)\,\le\,
c\,\big(\|u_{\eta_j}(0)\|_{L_{2q}}^{2}+\|u_{\eta_j}(0)\|_{\Wqb^{2-2/q}}\big)
\eqn
for $j\in\N$ and some constant $c$ independent of $u_{\eta_j}$. Writing the solution $u_{\eta_j}$ to \eqref{13a} in the form
$$
u_{\eta_j}(a)=e^{a\Delta_D}\,u_{\eta_j}(0)-\alpha_1\int_0^a e^{(a-\sigma)\Delta_D}\, (u_{\eta_j}(\sigma))^2\,\rd \sigma\ ,
$$ 
we see that
$$
u_{\eta_j}(0)=\eta_j\int_0^{a_m}b_1(a)\,
e^{a\Delta_D}\,u_{\eta_j}(0)\,\rd a-\alpha_1\,\eta_j\int_0^{a_m} b_1(a)\int_0^a e^{(a-\sigma)\Delta_D}\, (u_{\eta_j}(\sigma))^2\,\rd \sigma\,\rd a\ .
$$
Taking into account that $\|e^{a\Delta_D}\|_{\ml(L_q,\Wqq)}\le c a^{1/q-1}$ for $a>0$, e.g. due to \cite{LQPP}, we derive
from \eqref{2a} that $(u_{\eta_j}(0))_{j\in\N}$ stays bounded in $\Wqq$. But then $(u_{\eta_j})_{j\in\N}$ stays bounded in $\Wq$ by \eqref{2aa} in contradiction to our observation above. Therefore, $\eta_j \rightarrow\infty$ and we conclude that \eqref{13a} admits for each value of $\eta>1$ a unique solution $u_\eta\in\Wqd$.\\

Next, we show that $\|u_\eta\|_{\Wq}\rightarrow\infty$ as $\eta\rightarrow\infty$. Indeed, if $\|u_\eta\|_{\Wq}\le c<\infty$ for all $\eta>1$, then $\|u_\eta(0)\|_{\infty}$ would be bounded with respect to $\eta$ by \eqref{emb}. Thus $u_\eta(0)=\eta U_\eta$ would imply that $\|U_\eta\|_\infty$ tends to zero as $\eta\rightarrow \infty$ contradicting the fact
$$
\frac{\lambda_1}{\alpha_1(1-e^{-\lambda_1 a_m})}\,\frac{\eta-1}{\eta}\,\varphi_1\,\le\,\frac{1}{\eta}\, u_\eta(0)\,=\,U_\eta\quad \text{on}\ \Om
$$
and $\|\varphi_1\|_\infty=1$ according to Lemma~\ref{L3}.\\

Finally, it remains to prove that $(\eta\mapsto u_\eta)\in C^\infty((1,\infty),\Wq)$. For, set
$$
\Gamma(\eta,u):=\big(\partial_au-\Delta_Du+\alpha_1 u^2 \,,\,  u(0)-\eta U\big)
$$
and note that $\Gamma\in C^\infty ((1,\infty)\times \Wq,\Lq\times\Wqq)$ with $\Gamma(\eta,u_\eta)=(0,0)$ for $\eta>1$. In fact, if $\eta>1$ and $\phi\in\Wq$, then
$$
\Gamma_u(\eta,u_\eta)\phi=\big(\partial_a\phi-\Delta_D\phi+2\alpha_1u_\eta \phi \,,\,  \phi(0)-\eta \Phi\big)\ .
$$
Thus, $\Gamma_u(\eta,u_\eta)\phi=(\psi,\Theta)$ with $(\psi,\Theta)\in \Lq\times\Wqq$ if and only if
$$
\phi(a)=\Pi_{[2\alpha_1u_\eta]}(a,0)\phi(0)+\int_0^a \Pi_{[2\alpha_1u_\eta]}(a,\sigma)\,\psi(\sigma)\,\rd \sigma\ ,\quad a\in J\ ,
$$
and
$$
\big(1-\eta H_{[2\alpha_1u_\eta]}\big)\, \phi(0)\,=\,\eta\int_0^{a_m} b_1(a)\int_0^a \Pi_{[2\alpha_1u_\eta]}(a,\sigma)\,\psi(\sigma)\,\rd \sigma\,+\,\Theta\ .
$$
Invoking \eqref{sp} and Lemma~\ref{L1} we see that $1>r\big(\eta H_{[2\alpha_1u_\eta]}\big)$, whence $1-\eta H_{[2\alpha_1u_\eta]}$ is invertible. This readily implies that $\Gamma_u(\eta,u_\eta)$ is bijective and so $\Gamma_u(\eta,u_\eta)\in\ml(\Wq,\Lq\times\Wqq)$ is an isomorphism
by the open mapping theorem. The implicit function theorem then yields some $\ve>0$ and a function $\zeta\in C^\infty((\eta-\ve,\eta+\ve),\Wq)$ such that $\zeta(\eta)=u_\eta$ and $\Gamma(s,\zeta(s))=0$ for $\vert s-\eta\vert<\ve$. Since the solution to $\Gamma(s,u)=0$ is unique by Corollary~\ref{C1}, we derive $\zeta(s)=u_s$
and so $(\eta\mapsto u_\eta)\in C^\infty((1,\infty),\Wq)$. This completes the proof of Theorem~\ref{T1}.\\

Actually, we can say more about the derivative of $u_\eta$ with respect to $\eta$. Set $z:=\frac{\partial}{\partial\eta}u_\eta$. Differentiation of the equation
$$
\partial_au_\eta-\Delta_Du_\eta=-\alpha_1u_\eta^2\ ,\quad u_\eta(0)=\eta U_\eta
$$
with respect to $\eta$ and interchange of the smooth derivatives yield
$$
\partial_az-\Delta_Dz=-2\alpha_1u_\eta z\ ,\quad z(0)= U_\eta+\eta Z\ ,
$$
whence
$$
z(a)=\Pi_{[2\alpha_1 u_\eta]}(a,0) z(0)\ ,\quad a\in J\ ,\quad \big(1-\eta H_{[2\alpha_1u_\eta]}\big) z(0)=U_\eta\ .
$$
Since, as above, $1-\eta H_{[2\alpha_1u_\eta]}$ is invertible, we conclude
$$
z(0)=\big(1-\eta H_{[2\alpha_1u_\eta]}\big)^{-1} U_\eta\in\mathrm{int}(\Wqqp)
$$
and thus

\begin{cor}\label{C2}
If $\eta>1$, then $\frac{\partial}{\partial\eta}u_\eta (a)\in\mathrm{int}(\Wqqp)$ for $a\in J$.
\end{cor}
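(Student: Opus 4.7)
The plan is to leverage almost all of the groundwork already laid in the paragraph preceding the statement. Differentiating $u_\eta(0)=\eta U_\eta$ and $\partial_a u_\eta-\Delta_D u_\eta=-\alpha_1 u_\eta^2$ in $\eta$ (justified by the smoothness $(\eta\mapsto u_\eta)\in C^\infty((1,\infty),\Wq)$ just proved in Theorem~\ref{T1}) produces, for $z:=\partial_\eta u_\eta$, the two identities
$$
z(a)=\Pi_{[2\alpha_1 u_\eta]}(a,0)\,z(0)\ ,\qquad \bigl(1-\eta H_{[2\alpha_1 u_\eta]}\bigr)\,z(0)=U_\eta\ .
$$
By Remark~\ref{RR} we have $r\bigl(\eta H_{[\alpha_1 u_\eta]}\bigr)=1$, and the strict monotonicity of the spectral radius in Lemma~\ref{L1} gives $r\bigl(\eta H_{[2\alpha_1 u_\eta]}\bigr)<1$. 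Hence $1-\eta H_{[2\alpha_1 u_\eta]}$ is invertible with a \emph{positive} inverse given by the Neumann series.

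It remains to upgrade from positivity to interior membership, and this is the only content truly left to verify. I would first argue that $U_\eta\in\mathrm{int}(\Wqqp)$: from $u_\eta(a)=\Pi_{[\alpha_1 u_\eta]}(a,0)u_\eta(0)$ and $u_\eta(0)\in\Wqqp\setminus\{0\}$, the strong positivity of $\Pi_{[\alpha_1 u_\eta]}(a,0)$ for $a>0$ (invoked via Lemma~\ref{L1}'s underlying mechanism) places $u_\eta(a)$ in $\mathrm{int}(\Wqqp)$ for every $a\in(0,a_m]$. Combined with the hypothesis \eqref{11} that $b_1>0$ near $a_m$, the integral
$$
U_\eta=\int_0^{a_m} b_1(a)\,u_\eta(a)\,\rd a
$$
inherits membership in $\mathrm{int}(\Wqqp)$.

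Since the Neumann series representation gives $z(0)=\sum_{k\ge 0}(\eta H_{[2\alpha_1 u_\eta]})^k U_\eta\ge U_\eta$ in the order of $\Wqq$, and $U_\eta$ is already an interior point of the cone, so is $z(0)$. For $a=0$ this is the claim; for $a\in(0,a_m]$, the strong positivity of $\Pi_{[2\alpha_1 u_\eta]}(a,0)$ applied to $z(0)\in\mathrm{int}(\Wqqp)\subset\Wqqp\setminus\{0\}$ yields $z(a)\in\mathrm{int}(\Wqqp)$ as required.

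The main (minor) obstacle is simply making the interior claim for $U_\eta$ airtight; once that is in hand, the Neumann series argument and the strong positivity of the evolution operator close everything out mechanically. No new machinery beyond Lemma~\ref{L1} and Remark~\ref{RR} is required.
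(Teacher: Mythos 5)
Your proposal is correct and follows essentially the same route as the paper: differentiate to obtain $z(a)=\Pi_{[2\alpha_1 u_\eta]}(a,0)z(0)$ and $\bigl(1-\eta H_{[2\alpha_1 u_\eta]}\bigr)z(0)=U_\eta$, use \eqref{sp} and Lemma~\ref{L1} to get $r\bigl(\eta H_{[2\alpha_1 u_\eta]}\bigr)<1$ and hence a positive inverse, and conclude interior membership of $z(0)$ and then of $z(a)$ via strong positivity of the evolution operator. Your explicit verification that $U_\eta\in\mathrm{int}(\Wqqp)$ and the Neumann-series comparison $z(0)\ge U_\eta$ merely spell out what the paper leaves implicit.
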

\bigskip

\subsection{Further Auxiliary Results}

We end this section with two results regarding nontrivial nonnegative solutions to \eqref{7}-\eqref{10}. Given $\eta, \xi>1$, let $u_\eta\in\Wqd$ denote the unique solution to \eqref{7}, \eqref{9} with $v\equiv 0$ and, accordingly, let $v_\xi\in\Wqd$ denote the unique solution to \eqref{8}, \eqref{10} with $u\equiv 0$, both solutions being provided by Theorem~\ref{T1}.

\begin{lem}\label{L4}
Let $\xi, \eta>1$ be given and suppose that $(u,v)\in \Wq^+\times \Wq^+$ solves \eqref{7}-\eqref{10}. Then
$$
0\le u(a)\le u_\eta(a)\quad\text{on}\ \Om\ ,\quad a\in J\ ,
$$
and if $v\in\Wqd$, then
$$
 v(a)\ge v_\xi(a)\quad\text{on}\ \Om\ ,\quad a\in J\ .
$$
\end{lem}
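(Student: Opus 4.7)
Both inequalities reduce to Lemma~\ref{L2}, once the system is rewritten so that each component satisfies a logistic-type problem with a sign-definite source term. The only nontrivial point is that one must use Lemma~\ref{L2} not only as stated (for the birth kernel $b_1$ and coefficient $\alpha_1$) but also for the analogous setup with $b_2$ and $\beta_1$; this analog holds with an identical proof since \eqref{12} and Lemma~\ref{L1} were established for both $H_{[h]}$ and $\hat{H}_{[h]}$.

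\textbf{Proof of the first inequality.} I rewrite \eqref{7} as
$$
\partial_a u -\Delta_D u=-\alpha_1 u^2 - f,\qquad f:=\alpha_2 vu\ge 0,\qquad u(0)=\eta U,
$$
and note that $u_\eta$ satisfies $\partial_a u_\eta-\Delta_D u_\eta=-\alpha_1 u_\eta^2$ with $u_\eta(0)=\eta U_\eta$. If $u\equiv 0$ the claim is trivial; otherwise $u\in\Wqd$ and I apply the second alternative of Lemma~\ref{L2} with $u_\eta$ playing the role of the ``$u$'' in that lemma and $u$ playing the role of its ``$v$''. The hypothesis $u(0)\le \eta U$ is met with equality, so Lemma~\ref{L2} yields $u_\eta\ge u$ pointwise on $J\times\Om$.

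\textbf{Proof of the second inequality.} I rewrite \eqref{8} as
$$
\partial_a v-\Delta_D v=-\beta_1 v^2 + g,\qquad g:=\beta_2 uv\ge 0,\qquad v(0)=\xi\!\int_0^{a_m}\!b_2(a)v(a)\,\rd a,
$$
while $v_\xi$ solves the same equation with $g\equiv 0$ and $v_\xi(0)=\xi\int_0^{a_m}b_2(a)v_\xi(a)\,\rd a$. This fits the first alternative of Lemma~\ref{L2}, with $v$ in the role of its ``$u$'', $v_\xi$ in the role of its ``$v$'', and $(b_2,\beta_1,\xi,\hat{H})$ replacing $(b_1,\alpha_1,\eta,H)$. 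Concretely, I form $w:=v-v_\xi$, observe
$$
\partial_a w-\Delta_D w+\beta_1(v+v_\xi)w=g\ge 0,\qquad w(0)=\xi\!\int_0^{a_m}\!b_2(a)w(a)\,\rd a,
$$
so $w(a)\ge\Pi_{[\beta_1(v+v_\xi)]}(a,0)w(0)$ and hence $\bigl(1-\xi\hat{H}_{[\beta_1(v+v_\xi)]}\bigr)w(0)\ge 0$. By Remark~\ref{RR} (applied to $v_\xi$) one has $\xi\, r(\hat{H}_{[\beta_1 v_\xi]})=1$, while Lemma~\ref{L1} combined with $v\in\Wqd$ gives $r(\hat{H}_{[\beta_1(v+v_\xi)]})<r(\hat{H}_{[\beta_1 v_\xi]})$. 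Thus $1-\xi\hat{H}_{[\beta_1(v+v_\xi)]}$ is invertible with nonnegative inverse, forcing $w(0)\ge 0$ and then $w\ge 0$.

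\textbf{Main obstacle.} There is no real obstacle beyond bookkeeping: the result is a direct application of Lemma~\ref{L2}, once one observes that the cross term $\alpha_2 vu$ acts as a nonnegative sink in the $u$-equation and $\beta_2 uv$ acts as a nonnegative source in the $v$-equation, and that the comparison argument of Lemma~\ref{L2} applies verbatim with $\hat{H}$ in place of $H$. The only place one must be slightly careful is confirming the strict spectral-radius inequality in Part~2, which requires $v\not\equiv 0$; this is precisely why the second conclusion is stated under the hypothesis $v\in\Wqd$.
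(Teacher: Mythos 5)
Your proposal is correct and follows essentially the same route as the paper: rewrite \eqref{7} and \eqref{8} as logistic-type problems with a nonnegative sink/source term ($\alpha_2 uv$ and $\beta_2 uv$ respectively) and invoke the comparison principle of Lemma~\ref{L2}, together with its verbatim analogue for $(b_2,\beta_1,\xi,\hat{H})$. Your explicit rerun of the Lemma~\ref{L2} argument for the $\hat{H}$-setting (via $\xi\,r(\hat{H}_{[\beta_1 v_\xi]})=1$ and the strict monotonicity of the spectral radius from Lemma~\ref{L1}) is exactly what the paper uses implicitly when it writes ``similarly''.
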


\begin{proof}
Since $u,v\in \Wq^+$, we have
$$
\partial_a u-\Delta_Du=-\alpha_1 u^2-\alpha_2uv\le -\alpha_1 u^2\ ,\quad u(0)=\eta \int_0^{a_m} b_1(a) u(a)\,\rd a\ ,
$$
and so $u(a)\le u_\eta(a)$ for $a\in J$ by Lemma~\ref{L2}. Similarly,
$$
\partial_a v-\Delta_Dv=-\beta_1 v^2+\beta_2uv\ge -\beta_1 v^2\ ,\quad v(0)=\xi \int_0^{a_m} b_2(a) v(a)\,\rd a\ ,
$$
and so $v(a)\ge v_\xi(a)$ for $a\in J$ if $v\not\equiv 0$.
\end{proof}

Next we give constraints on the parameters $\eta$ and $\xi$ for solutions to \eqref{7}-\eqref{10}.

\begin{lem}\label{L5}
Let $\xi, \eta>0$ be given and suppose that $(u,v)\in \Wq^+\times \Wq^+$ solves \eqref{7}-\eqref{10}.
\begin{itemize}
\item[(i)] If $\eta>1$ and $v\not\equiv 0$, then
\bqnn
 \xi\ge \frac{1}{r(\hat{H}_{[-\beta_2u_\eta]})}\in (0,1)\ .
\eqnn
\item[(ii)] If $\xi>1$ and $u\not\equiv 0$, then $\eta\ge 1$, and if also $v\not\equiv 0$, then
\bqnn
 \eta\ge \frac{1}{r(H_{[\alpha_2 v_\xi]})}\in (1,\infty)\ .
\eqnn
\end{itemize}
\end{lem}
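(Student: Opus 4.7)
The plan is to represent the component equations in the fixed-point form \eqref{darst1}--\eqref{darst2}, use that a nonzero, nonnegative solution forces the initial trace into the interior of the positive cone and hence produces a positive eigenvector of the associated integral operator, and then invoke the monotonicity of the spectral radius from Lemma~\ref{L1} together with the comparisons $u\le u_\eta$, $v\ge v_\xi$ from Lemma~\ref{L4}.

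\textbf{Part (i).} Suppose $\eta>1$ and $v\in\Wqd$. From \eqref{darst2} we have
\bqnn
v(a)=\Pi_{[\beta_1 v-\beta_2 u]}(a,0)\, v(0)\ ,\quad v(0)=\xi\,\hat{H}_{[\beta_1 v-\beta_2 u]}\, v(0)\ .
\eqnn
Because $v\not\equiv 0$ forces $v(0)\not=0$, and the evolution operator is strongly positive for positive ages, the initial value $v(0)$ lies in $\mathrm{int}(\Wqqp)$. Hence $v(0)$ is a positive eigenvector of $\xi\hat H_{[\beta_1 v-\beta_2 u]}$, and by the Krein--Rutman part of Lemma~\ref{L1} we obtain the crucial identity
\bqnn
\xi\, r\big(\hat H_{[\beta_1 v-\beta_2 u]}\big)=1\ .
\eqnn
Since $u\le u_\eta$ by Lemma~\ref{L4} and $v\ge 0$, we have $\beta_1 v-\beta_2 u\ge-\beta_2 u_\eta$ pointwise, with strict inequality somewhere since $v\not\equiv 0$ or $u\not\equiv u_\eta$ (if $u\equiv u_\eta$ the original $v$-equation still gives $\beta_1 v\ge 0$, and we separately handle that trivial case). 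Applying the monotonicity statement in Lemma~\ref{L1} yields
\bqnn
r\big(\hat H_{[\beta_1 v-\beta_2 u]}\big)\le r\big(\hat H_{[-\beta_2 u_\eta]}\big)\ ,
\eqnn
and therefore $\xi\ge 1/r(\hat H_{[-\beta_2 u_\eta]})$. Finally, $u_\eta\not\equiv 0$ gives $-\beta_2 u_\eta<0\equiv 0$ strictly, so the same monotonicity (comparing with the zero potential) combined with the normalization \eqref{12}, which implies $r(\hat H_{[0]})=1$, yields $r(\hat H_{[-\beta_2 u_\eta]})>1$; hence $1/r(\hat H_{[-\beta_2 u_\eta]})\in(0,1)$.

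\textbf{Part (ii).} Exactly the same scheme applies to the $u$-equation. If $u\not\equiv 0$, then \eqref{darst1} and strong positivity of $\Pi_{[\alpha_1 u+\alpha_2 v]}(a,0)$ give $u(0)\in\mathrm{int}(\Wqqp)$, and Lemma~\ref{L1} produces $\eta\, r(H_{[\alpha_1 u+\alpha_2 v]})=1$. Because $\alpha_1 u+\alpha_2 v\ge 0$ (with strict inequality somewhere since $u\not\equiv 0$), monotonicity together with $r(H_{[0]})=1$ yields $r(H_{[\alpha_1 u+\alpha_2 v]})<1$, so $\eta>1$. If in addition $v\not\equiv 0$, Lemma~\ref{L4} gives $v\ge v_\xi$, hence $\alpha_1 u+\alpha_2 v\ge\alpha_2 v_\xi$, and monotonicity of the spectral radius delivers $r(H_{[\alpha_1 u+\alpha_2 v]})\le r(H_{[\alpha_2 v_\xi]})$, so $\eta\ge 1/r(H_{[\alpha_2 v_\xi]})$. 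Using $v_\xi\not\equiv 0$ once more, the same comparison with the zero potential shows $r(H_{[\alpha_2 v_\xi]})<1$, establishing that the lower bound lies in $(1,\infty)$.

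The only step that requires any care is confirming that the nontriviality assumptions on $u$ or $v$ indeed suffice to place the initial trace in the interior of the positive cone so that the eigenvalue equation forces $\xi r(\hat H_{\bullet})=1$ (respectively $\eta r(H_{\bullet})=1$) rather than merely an inequality; everything else is a direct application of Lemmas~\ref{L1} and \ref{L4} together with the normalization \eqref{12}.
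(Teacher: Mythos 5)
Your proof is correct, but it runs along a different line than the paper's. You exploit the exact fixed--point identities \eqref{darst1}, \eqref{darst2}: since $v(0)$ (resp.\ $u(0)$) is a nonzero positive eigenvector, the Krein--Rutman part of Lemma~\ref{L1} forces the sharp identities $\xi\, r\big(\hat H_{[\beta_1 v-\beta_2 u]}\big)=1$ and $\eta\, r\big(H_{[\alpha_1 u+\alpha_2 v]}\big)=1$, and the conclusion then follows from the monotonicity of the spectral radius in the potential together with $u\le u_\eta$, $v\ge v_\xi$ (Lemma~\ref{L4}) and $r(H_{[0]})=r(\hat H_{[0]})=1$ from \eqref{12}. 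The paper instead works with one--sided differential inequalities: from $v\le \Pi_{[-\beta_2 u_\eta]}(\cdot,0)v(0)$ it derives $\big(1-\xi\hat H_{[-\beta_2 u_\eta]}\big)v(0)\le 0$ and rules out $\xi\, r(\hat H_{[-\beta_2 u_\eta]})<1$ via positivity of the resolvent $\big(1-\xi\hat H_{[-\beta_2 u_\eta]}\big)^{-1}\ge 0$, and for the claim $\eta\ge 1$ in (ii) it repeats the $\varphi_1$--integration argument from Proposition~\ref{P1}. Your route is more uniform (one mechanism handles both parts, and it even yields the slightly stronger conclusion $\eta>1$ in (ii)) and avoids the resolvent--positivity citation, at the price of needing the potentials $\beta_1 v-\beta_2 u$ and $\alpha_1 u+\alpha_2 v$ to satisfy the H\"older regularity required in Lemma~\ref{L1}; this does hold, by \eqref{embb}, and is exactly how the paper itself uses $\Wq$--potentials (e.g.\ in Lemma~\ref{L2}), but it is worth saying explicitly. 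Two small remarks: your digression about where the inequality $\beta_1 v-\beta_2 u\ge -\beta_2 u_\eta$ is strict is unnecessary, since you only need the non--strict comparison of spectral radii (which holds trivially when the potentials coincide); and the interior--point property of $v(0)$, while true, is not needed -- Lemma~\ref{L1} already states that $r$ is the only eigenvalue with a positive (not necessarily quasi--interior) eigenvector.
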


\begin{proof}
(i) It follows from Lemma~\ref{L4} that
$$
\partial_a v-\Delta_Dv=-\beta_1 v^2+\beta_2uv\le \beta_2u_\eta v\ ,\quad v(0)=\xi V\ ,
$$
and so $v(a)\le \Pi_{[-\beta_2u_\eta]}(a,0) v(0)$ for $a\in J$. Hence
$$
v(0)\le \xi \int_0^{a_m}b_2(a)\,\Pi_{[-\beta_2u_\eta]}(a,0)\,\rd a\, v(0)=\xi \,\hat{H}_{[-\beta_2u_\eta]}\,v(0)
$$
i.e. $(1-\xi \hat{H}_{[-\beta_2u_\eta]})v(0)\le 0$. Suppose $\xi^{-1}> r(\hat{H}_{[-\beta_2u_\eta]})$. Then $1$ belongs to the resolvent set of $\xi \hat{H}_{[-\beta_2u_\eta]}$, whence $(1-\xi \hat{H}_{[-\beta_2u_\eta]})^{-1}\ge 0$ by \cite[Eq.(12.8)]{DanersKochMedina} yielding $v(0)\le 0$. Since $v\in\Wq^+$ by assumption, this gives $v(0)=0$ and so $v\equiv 0$ from \eqref{darst2}. From Lemma~\ref{L1} and \eqref{12} we deduce $ r(\hat{H}_{[-\beta_2u_\eta]})> r(\hat{H}_{[0]})=1$.

(ii) The first assertion is shown as in the last step of Proposition~\ref{P1}. Since 
$$
\partial_a u-\Delta_Du=-\alpha_1 u^2-\alpha_2uv\le -\alpha_2u v_\xi\ ,\quad u(0)=\eta U\ ,
$$
by Lemma~\ref{L4} if $v\not\equiv 0$, we conclude the second assertion as in (i).
\end{proof}

\section{Bifurcation for the Parameter $\xi$: Proof of Theorem \ref{T2}}\label{sec 44}

In this section we present the proof of Theorem~\ref{T2}. Regarding $\xi$ as bifurcation parameter in \eqref{7}-\eqref{10} and keeping $\eta$ fixed, we write $(\xi,u,v)$ for a solution to \eqref{7}-\eqref{10} and thus suppress $\eta$ since no confusion seems likely. First recall that Theorem~\ref{T1} warrants for any value of $\eta$ the existence of the semi-trivial branch
$$
\mathfrak{B}_1=\{(\xi,0,v_\xi)\,;\,\xi\in(1,\infty)\}\subset\R^+\times\Wq^+\times \Wqd\ ,
$$
where $(\xi,v_\xi)$ is the unique solution to \eqref{8} with $u\equiv 0$ subject to \eqref{10}. In addition,
if $\eta>1$, then there is another semi-trivial branch
$$
\mathfrak{B}_2=\{(\xi,u_\eta,0)\,;\,\xi\in\R\}\subset\R\times \Wqd\times\Wq^+\ .
$$

Let $\eta>1$ be fixed. By using Rabinowitz' global alternative \cite{LopezGomezChapman,Rabinowitz} we now show that a branch of coexistence solutions bifurcates from $(\xi_0(\eta),u_\eta,0)\in \mathfrak{B}_2$, where the choice  \bqn\label{17}
 \xi_0(\eta):=\frac{1}{r(\hat{H}_{[-\beta_2u_\eta]})}\in (0,1)\ 
\eqn
is suggested by Lemma~\ref{L5} (i). Due to Lemma~\ref{L4}, $(\xi,u,v)=(\xi,u_\eta-w,v)\in \R^+\times\Wq^+\times \Wq^+$ solves \eqref{7}-\eqref{10} if and only if $(\xi,w,v)\in \R^+\times\Wq^+\times \Wq^+$ with $w\le u_\eta$ solves
\begin{align}
&\partial_aw-\Delta_Dw=\alpha_1w^2-2\alpha_1u_\eta w+\alpha_2 (u_\eta-w) v\ ,& w(0)=\eta W\ ,\label{18a}\\
&\partial_av-\Delta_Dv=-\beta_1v^2+\beta_2 (u_\eta- w) v\ ,& v(0)=\xi V\ ,\label{18b}
\end{align}
where we slightly abuse notation by writing
$$
W:=\int_0^{a_m} b_1(a)\, w(a)\,\rd a\ ,\qquad V:=\int_0^{a_m} b_2(a)\, w(a)\,\rd a\ 
$$
when $w,v\in\Wq$. We shall use this notation also for other capital letters since it will always be clear from the context, which of the profiles $b_1$ or $b_2$ is meant. Since the interval $J$ is compact and $u_\eta\in\Wq$, it follows from \eqref{emb} and \cite[I.Cor.1.3.2,III.Thm.4.8.7,III.Thm.4.10.10]{LQPP} that
\begin{align*}
&Z_1:=\big(\partial_a-\Delta_D+2\alpha_1u_\eta,\gamma_0)^{-1}\in \ml(\Lq\times\Wqq, \Wq)\ ,\\
&Z_2:=\big(\partial_a-\Delta_D-\beta_2u_\eta,\gamma_0)^{-1}\in \ml(\Lq\times\Wqq, \Wq)\ ,
\end{align*}
due to maximal regularity. Equations \eqref{18a}, \eqref{18b} may then be restated equivalently as
\bqn\label{19}
(w,v)-K(\xi)(w,v)+R(w,v)=0\ 
\eqn
by setting
$$
K(\xi)(w,v):=\left(\begin{matrix} Z_1(\alpha_2u_\eta v,\eta W)\\ Z_2(0,\xi V)\end{matrix}\right)\ ,\qquad R(w,v):=-\left(\begin{matrix} Z_1(\alpha_1 w^2-\alpha_2wv,0)\\ Z_2(-\beta_1v^2-\beta_2wv,0)\end{matrix}\right)\ 
$$
for $(w,v)\in \Wq\times\Wq$. Obviously, $K(\xi)\in\ml(\Wq\times\Wq)$. 

\begin{lem}\label{L6}
Let $\xi\in\R$. If $\mu\ge 1$ is an eigenvalue of $K(\xi)$ with eigenvector $(w,v)\in\Wq\times\Wq$, then $\xi\not=0$, and $\mu/\xi$ is an eigenvalue of~$\hat{H}_{[-\beta_2 u_\eta]}$ with eigenvector $v(0)\in\Wqq$.
\end{lem}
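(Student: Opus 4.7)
The plan is to unpack the operator eigenvalue equation $K(\xi)(w,v)=\mu(w,v)$, via the definitions of $Z_1$ and $Z_2$, into the two parabolic boundary-value problems
\begin{align*}
(\partial_a-\Delta_D+2\alpha_1 u_\eta)(\mu w) &= \alpha_2 u_\eta v, & \mu w(0)&=\eta W,\\
(\partial_a-\Delta_D-\beta_2 u_\eta)(\mu v) &= 0, & \mu v(0)&=\xi V.
\end{align*}
The second line is homogeneous in age, so the evolution operator representation gives $\mu v(a)=\xi\,\Pi_{[-\beta_2 u_\eta]}(a,0)\,V$; integrating against $b_1(a)$ (respectively $b_2(a)$) is the natural next move. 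From the second line this yields at once the scalar-operator identity $\mu V=\xi\,\hat H_{[-\beta_2u_\eta]}V$, which is essentially the desired eigenvalue equation. What remains is to exclude the degenerate cases $\xi=0$ and $V=0$ and then to transfer the eigenvector from $V$ to $v(0)$.

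The degenerate cases are the only delicate point. If $\xi=0$ or $V=0$, then $\mu v(0)=\xi V=0$ forces $v\equiv 0$ (since $\mu\ge 1\neq 0$), and the first equation collapses to $\mu w(a)=\eta\,\Pi_{[2\alpha_1 u_\eta]}(a,0)\,W$; integrating against $b_1$ gives $\mu W=\eta\,H_{[2\alpha_1 u_\eta]}W$. Nontriviality of $(w,v)$ with $v=0$ implies $w\not\equiv 0$, hence $W\neq 0$ (otherwise $w(0)=0$ would propagate to $w\equiv 0$). Thus $\mu/\eta$ would be an eigenvalue of $H_{[2\alpha_1 u_\eta]}$. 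The crucial quantitative input is Remark~\ref{RR}, which gives $r(H_{[\alpha_1 u_\eta]})=1/\eta$, combined with the strict monotonicity of the spectral radius in Lemma~\ref{L1} applied to $2\alpha_1 u_\eta>\alpha_1 u_\eta$: together they yield $r(H_{[2\alpha_1 u_\eta]})<1/\eta$, so $|\mu|<1$, contradicting $\mu\ge 1$. Consequently $\xi\neq 0$ and $V\neq 0$.

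Once both are nonzero, $v(0)=(\xi/\mu)V$ is a nonzero element of $\Wqq$ by the trace embedding in~\eqref{emb}. Substituting $V=(\mu/\xi)v(0)$ into $\hat H_{[-\beta_2 u_\eta]}V=(\mu/\xi)V$ and cancelling the nonzero scalar $\mu/\xi$ gives $\hat H_{[-\beta_2 u_\eta]}v(0)=(\mu/\xi)v(0)$, which is the claim. The main obstacle is thus purely bookkeeping of degenerate cases rather than any deep analysis; the genuine ingredient that drives the proof is the strict spectral-radius inequality $r(H_{[2\alpha_1 u_\eta]})<1/\eta$ furnished jointly by Remark~\ref{RR} and Lemma~\ref{L1}.
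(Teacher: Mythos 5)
Your proposal is correct and follows essentially the same route as the paper: reduce the eigenvalue equation to the two parabolic problems, use the homogeneous second equation to obtain the $\hat H_{[-\beta_2 u_\eta]}$ eigenvalue relation, and rule out the degenerate case ($v\equiv 0$, i.e. $\xi=0$ or the integrated quantity vanishing) by the strict spectral-radius comparison $r(H_{[2\alpha_1 u_\eta]})<r(H_{[\alpha_1 u_\eta]})=1/\eta$ coming from \eqref{sp} and Lemma~\ref{L1}. The only cosmetic difference is that you phrase the eigenvalue relations in terms of $W$ and $V$ and then transfer to $v(0)$, whereas the paper works with $w(0)$ and $v(0)$ directly; the two are proportional, so the arguments coincide.
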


\begin{proof}
Let $\mu\ge 1$ and $(w,v)\in \Wq\times\Wq\setminus\{(0,0)\}$ with $K(\xi)(w,v)=\mu (w,v)$. Suppose $v\equiv 0$. Then
$$
\partial_a w-\Delta_Dw+2\alpha_1 u_\eta w=0\ ,\quad w(0)=\frac{\eta}{\mu} W\ ,
$$
from which
$$
w(a)=\Pi_{[2\alpha_1 u_\eta]}(a,0) w(0)\ ,\quad a\in J\ ,\qquad
w(0)= \frac{\eta}{\mu} H_{[2\alpha_1 u_\eta]} w(0)\ .
$$
In particular, $w(0)\not=0$ since otherwise $(w,v)\equiv(0,0)$, and hence $\mu\le \eta r(H_{[2\alpha_1 u_\eta]})$ contradicting the fact that $1=r(\eta H_{[\alpha_1 u_\eta]})>\mu^{-1}r(\eta H_{[2\alpha_1 u_\eta]})$ by \eqref{sp} and Lemma~\ref{L1} because $\mu\ge 1$. Therefore, $v\not\equiv 0$. But from
$$
\partial_a v-\Delta_Dv-\beta_2u_\eta v=0\ ,\quad v(0)=\frac{\xi}{\mu}V\ ,
$$
it follows
$$
v(a)=\Pi_{[-\beta_2 u_\eta]}(a,0) v(0)\ ,\quad a\in J\ ,\qquad
v(0)= \frac{\xi}{\mu} \hat{H}_{[-\beta_2 u_\eta]} v(0)\ ,
$$
and so $v(0)\not= 0$ and $\xi\not=0$ since otherwise $v\equiv 0$. Consequently, $\mu/\xi$ is an eigenvalue of~$\hat{H}_{[-\beta_2 u_\eta]}$ with eigenvector $v(0)$.
\end{proof}

\begin{lem}\label{L7}
(i) $K(\xi)\in\mk(\Wq\times\Wq)$, $\xi\in\R$, is a continuous family of compact operators.

\noindent (ii) $R\in C(\Wq\times\Wq,\Wq\times\Wq)$ is compact with $R(w,v)=o\big(\|(w,v)\|_{\Wq\times\Wq}\big)$ as $(w,v)\rightarrow (0,0)$.

\noindent (iii) The set $\Sigma:=\{\xi\in\R\,;\, \mathrm{dim}\big(\mathrm{ker}(1-K(\xi))\big)\ge 1\}$ is discrete.
\end{lem}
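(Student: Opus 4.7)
All three parts follow the standard template for semilinear age-structured problems: factor the operators through $\Lq\times\Wqq$, invoke maximal $L_q$-regularity for $Z_1,Z_2$, and extract compactness from a Rellich/Aubin--Lions type embedding. The only real task is to identify the compactifying step correctly in each component.

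\textbf{For (i).} I would write $K(\xi)(w,v)=\bigl(Z_1 M_1(w,v),\,Z_2 M_2(\xi,w,v)\bigr)$ with $M_1(w,v):=(\alpha_2 u_\eta v,\eta W)$ and $M_2(\xi,w,v):=(0,\xi V)$. Since $Z_1,Z_2\in\ml(\Lq\times\Wqq,\Wq)$, compactness of $K(\xi)$ reduces to compactness of $M_1,M_2$ into $\Lq\times\Wqq$. The $\Lq$-component is handled by $u_\eta\in L_\infty(J\times\Om)$ (coming from \eqref{emb}) together with the compact embedding $\Wq\dhr\Lq$, itself a consequence of \eqref{E} and Aubin--Lions. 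For the $\Wqq$-component, i.e.\ the integral map $w\mapsto W$, I would push the norm inside the Bochner integral:
\[\|W\|_{\Wqb^2}\le\|b_1\|_{L_{q'}(J)}\,\|w\|_{L_q(J,\Wqb^2)}\le c\,\|w\|_\Wq,\]
and then apply the Rellich embedding $\Wqb^2\dhr\Wqq$. Lipschitz dependence of $\xi\mapsto K(\xi)$ is immediate, since only the second component depends on $\xi$ and does so linearly.

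\textbf{For (ii).} Using $\Wq\hookrightarrow L_\infty(J\times\Om)$ from \eqref{emb}, the products $w^2,wv,v^2$ are bounded in $\Lq$ by $c(\|w\|_\Wq+\|v\|_\Wq)^2$, so maximal regularity yields $\|R(w,v)\|_{\Wq\times\Wq}\le c\,\|(w,v)\|_{\Wq\times\Wq}^2$, which gives the required vanishing of $R(w,v)/\|(w,v)\|$. For compactness, given a bounded sequence $(w_n,v_n)$ in $\Wq\times\Wq$ I would extract a subsequence converging in $\Lq\times\Lq$ via $\Wq\dhr\Lq$; since both sequences are uniformly bounded in $L_\infty(J\times\Om)$, dominated convergence lets one pass to the limit in the quadratic terms in $\Lq$, and composing with the continuous $Z_1,Z_2$ produces convergence in $\Wq\times\Wq$.

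\textbf{For (iii) and the main obstacle.} I would invoke Lemma~\ref{L6}: any $\xi\in\Sigma$ satisfies $\xi\neq 0$ and $1/\xi$ is an eigenvalue of the compact operator $\hat H_{[-\beta_2 u_\eta]}\in\mk(\Wqq)$. Since the point spectrum of a compact operator is at most countable with $0$ as its only possible accumulation point, the preimages $\xi=1/\lambda$ form a set without accumulation in $\R$, which is precisely the discreteness of $\Sigma$. The one step calling for a little thought is the compactness of the integral $w\mapsto W$ into $\Wqq$: the time-continuous embedding $\Wq\hookrightarrow C(J,\Wqq)$ is \emph{not} compact, so the compactness cannot be read off from time regularity alone --- the Bochner estimate above, which integrates first against $b_1\in L_\infty(J)$ to land in the strictly smoother space $\Wqb^2$, is the essential observation that makes the whole scheme go through.
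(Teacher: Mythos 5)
Your proposal is correct and follows essentially the same route as the paper: factor $K(\xi)$ and $R$ through $Z_1,Z_2$ via maximal regularity, obtain compactness from the embeddings in \eqref{E} (equivalently, the compactness of the product map into $\Lq$) together with $\Wqb^2\dhr\Wqq$ for the nonlocal terms $W,V$, and deduce (iii) from Lemma~\ref{L6} and the discreteness of the nonzero spectrum of the compact operator $\hat H_{[-\beta_2 u_\eta]}$. Your explicit Bochner--H\"older estimate for $w\mapsto W$ into $\Wqb^2$ is exactly the observation the paper compresses into the phrase ``since $\Wqb^2\dhr\Wqq$''.
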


\begin{proof}
It follows from \eqref{E} that the mapping
\bqn\label{comp}
\Wq\times\Wq\rightarrow \Lq\ ,\quad (w,v)\mapsto wv\quad\text{is compact}\ ,
\eqn
and, since $\Wqb^2\dhr\Wqq$, we easily deduce that $K(\xi)\in\ml(\Wq\times\Wq)$ and $R\in C(\Wq\times\Wq,\Wq\times\Wq)$ are compact. Finally, if $\xi\in\Sigma$, then $\mu=1$ is an eigenvalue of $K(\xi)$ and so $1/\xi$ is an eigenvalue of~$\hat{H}_{[-\beta_2 u_\eta]}$ due to Lemma~\ref{L6}. But the spectrum of the compact operator $\hat{H}_{[-\beta_2 u_\eta]}$ is discrete.
\end{proof}

In order to apply the global alternative of Rabinowitz, the next lemma will be fundamental. For a summary about the fixed point index we refer, e.g., to \cite[Sect.5.6]{LopezGomezChapman}.

\begin{lem}\label{L8}
Let $\xi_0(\eta)$ be defined in \eqref{17}. Then the fixed point index $\mathrm{Ind}(0,K(\xi))$ of zero with respect to $K(\xi)$ changes sign as $\xi$ crosses $\xi_0(\eta)$.
\end{lem}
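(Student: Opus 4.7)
The plan is to apply the Leray--Schauder index formula
$\mathrm{Ind}(0, K(\xi)) = (-1)^{m(\xi)}$,
where $m(\xi)$ denotes the sum of the algebraic multiplicities of the real eigenvalues of $K(\xi)$ strictly greater than one, and to show that exactly one real eigenvalue of $K(\xi)$ crosses $1$ from below as $\xi$ passes through $\xi_0(\eta)$, with algebraic multiplicity~$1$. Note that $\mathrm{Ind}(0,K(\xi))$ is well-defined for $\xi$ in a punctured neighborhood of $\xi_0(\eta)$ by Lemma~\ref{L7}(iii).

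First I would identify $\ker(I - K(\xi_0(\eta)))$. If $(w,v)\in\Wq\times\Wq$ satisfies $K(\xi_0(\eta))(w,v) = (w,v)$, then Lemma~\ref{L6} with $\mu = 1$ forces $v(0) = \xi_0(\eta)\,\hat{H}_{[-\beta_2 u_\eta]}\, v(0)$. By~\eqref{17} and Lemma~\ref{L1}, $1/\xi_0(\eta) = r(\hat{H}_{[-\beta_2 u_\eta]})$ is a simple eigenvalue with one-dimensional eigenspace spanned by $B_{[-\beta_2 u_\eta]}\in\mathrm{int}(\Wqqp)$, so $v(0) = c\, B_{[-\beta_2 u_\eta]}$ and $v(a) = \Pi_{[-\beta_2 u_\eta]}(a,0)\, v(0)$ are determined up to the scalar $c$. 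The first component then reads $(1 - \eta H_{[2\alpha_1 u_\eta]})\, w(0) = \eta F(v)$ for a linear functional $F$ of $v$; Remark~\ref{RR} together with Lemma~\ref{L1} applied to the strict inequality $\alpha_1 u_\eta < 2\alpha_1 u_\eta$ yields $r(\eta H_{[2\alpha_1 u_\eta]}) < r(\eta H_{[\alpha_1 u_\eta]}) = 1$, so $1 - \eta H_{[2\alpha_1 u_\eta]}$ is invertible and $w$ is uniquely determined by $c$. Thus $\ker(I - K(\xi_0(\eta)))$ is one-dimensional, spanned by some $(w_0, v_0)$ with $v_0(0) \in \mathrm{int}(\Wqqp)$.

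To verify algebraic simplicity, I would try to solve $(I - K(\xi_0(\eta)))(\tilde w, \tilde v) = (w_0, v_0)$. The $\tilde v$-component forces $\tilde v$ to solve $\partial_a \tilde v - \Delta_D \tilde v - \beta_2 u_\eta \tilde v = 0$ with $\tilde v(0) = v_0(0) + \xi_0(\eta)\,\tilde V$; using $\tilde V = \hat{H}_{[-\beta_2 u_\eta]}\,\tilde v(0)$ this reduces to
\[
\bigl(1 - \xi_0(\eta)\, \hat{H}_{[-\beta_2 u_\eta]}\bigr)\, \tilde v(0) \;=\; v_0(0).
\]
By the Fredholm alternative, solvability requires $\langle B_{[-\beta_2 u_\eta]}', v_0(0)\rangle = 0$; but $B_{[-\beta_2 u_\eta]}'$ is strictly positive and $v_0(0) \in \mathrm{int}(\Wqqp)$, so this pairing is strictly positive, and the equation has no solution. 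Therefore $\ker(I - K(\xi_0(\eta)))^2 = \ker(I - K(\xi_0(\eta)))$, and $1$ is an algebraically simple eigenvalue of $K(\xi_0(\eta))$.

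Finally, for $\xi$ in a small punctured neighborhood of $\xi_0(\eta)$, Lemma~\ref{L6} implies that any real eigenvalue $\mu \geq 1$ of $K(\xi)$ must satisfy $\mu/\xi \in \sigma(\hat{H}_{[-\beta_2 u_\eta]})$. Since $1/\xi_0(\eta)$ is an isolated simple eigenvalue, for $\xi$ close enough to $\xi_0(\eta)$ the only eigenvalue of $K(\xi)$ near $1$ is $\mu(\xi) = \xi/\xi_0(\eta)$, whose algebraic multiplicity remains~$1$ by continuity of the Riesz projection, while other real eigenvalues of $K(\xi)$ stay bounded away from~$1$. Consequently $m(\xi)$ jumps by exactly one as $\xi$ increases through $\xi_0(\eta)$, and $\mathrm{Ind}(0, K(\xi)) = (-1)^{m(\xi)}$ changes sign. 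The most delicate step is the algebraic-simplicity argument, which hinges on the strict positivity of the dual Krein--Rutman eigenfunctional $B_{[-\beta_2 u_\eta]}'$ pairing nontrivially with~$v_0(0)$.
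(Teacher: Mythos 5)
Your overall strategy is the paper's: the Leray--Schauder formula $\mathrm{Ind}(0,K(\xi))=(-1)^{m(\xi)}$, Lemma~\ref{L6} to locate eigenvalues $\mu\ge 1$ of $K(\xi)$ via the spectrum of $\hat H_{[-\beta_2 u_\eta]}$, and Krein--Rutman simplicity of $r(\hat H_{[-\beta_2 u_\eta]})=1/\xi_0(\eta)$ to get algebraic simplicity. Your kernel computation and your simplicity argument at $\xi=\xi_0(\eta)$ are correct; using the strictly positive dual eigenfunctional and the Fredholm alternative is an equivalent variant of the paper's argument, which instead shows $\psi_0\in\ker\bigl(1-\tfrac{\xi}{\mu_*}\hat H_{[-\beta_2u_\eta]}\bigr)\cap\mathrm{rg}\bigl(1-\tfrac{\xi}{\mu_*}\hat H_{[-\beta_2u_\eta]}\bigr)=\{0\}$.

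The gap is in your last step. From Lemma~\ref{L6} you only get a \emph{conditional} statement: if $K(\xi)$ has an eigenvalue $\mu\ge 1$, then $\mu=\xi/\xi_0(\eta)$. Continuity of the Riesz projection gives you exactly one (real, simple) eigenvalue $\mu(\xi)$ of $K(\xi)$ in a small disc around $1$, but neither fact identifies $\mu(\xi)$ with $\xi/\xi_0(\eta)$, nor excludes that $\mu(\xi)<1$ for $\xi>\xi_0(\eta)$ — in which case $m(\xi)=0$ on both sides and the index would not change. So the crucial assertion ``the only eigenvalue near $1$ is $\xi/\xi_0(\eta)$'' is exactly the existence statement you have not proved. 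Two repairs are available. (a) Do what the paper does: for $\xi$ slightly above $\xi_0(\eta)$ and $\mu_*:=\xi/\xi_0(\eta)$, construct the eigenvector explicitly — the second component decouples, $\psi_*:=Z_2(0,\psi_0)$ with $\psi_0$ the positive eigenvector of $\hat H_{[-\beta_2u_\eta]}$ satisfies $Z_2(0,\xi\Psi_*)=\mu_*\psi_*$, and the first component is solvable because $1-\tfrac{\eta}{\mu_*}H_{[2\alpha_1u_\eta]}$ is invertible by \eqref{sp} and Lemma~\ref{L1}; this shows $\mu_*$ really is a (simple) eigenvalue, so $m(\xi)=1$ there. (b) Alternatively, note that the proof of Lemma~\ref{L6} in fact applies to every eigenvalue $\mu>\eta\, r(H_{[2\alpha_1u_\eta]})$, and this threshold is a fixed number strictly less than $1$; hence the unique eigenvalue near $1$ supplied by the Riesz projection must equal $\xi\, r(\hat H_{[-\beta_2u_\eta]})=\xi/\xi_0(\eta)$, which is what you wanted. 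With either fix your argument closes; as written, the crossing of the eigenvalue through $1$ is asserted rather than proved.
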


\begin{proof}
Recall that $\mathrm{Ind}(0,K(\xi))=(-1)^{\zeta(\xi)}$, where $\zeta(\xi)$ is the sum of the algebraic multiplicities of all real eigenvalues of $K(\xi)$ greater than one. First, let $\xi<\xi_0(\eta)$ and suppose there is an eigenvalue \mbox{$\mu\ge 1$} of $K(\xi)$. Then, since $\mu/\xi$ is an eigenvalue of~$\hat{H}_{[-\beta_2 u_\eta]}$ according to Lemma~\ref{L6}, we get from \eqref{17} the contradiction $\mu/\xi\le \xi_0(\eta)^{-1}$. Thus 
$$
\mathrm{Ind}(0,K(\xi))=1\ ,\quad \xi<\xi_0(\eta)\ .
$$ 
Next, observe that, since $\hat{H}_{[-\beta_2 u_\eta]}$ is compact and strongly positive, there is some $\ve>0$ such that the interval $(\xi_0(\eta)^{-1}-\ve,\infty)$ contains only one eigenvalue of $\hat{H}_{[-\beta_2 u_\eta]}$, namely the simple eigenvalue $\xi_0(\eta)^{-1}$. Fix $\xi$ such that $\xi_0(\eta)^{-1}-\ve<\xi^{-1}\le \xi_0(\eta)^{-1}$. Then there is a unique $\mu_*\ge 1$ with $\xi/\mu_*=\xi_0(\eta)$. Clearly, if $\mu\ge 1$ is an eigenvalue of $K(\xi)$, then necessarily $\mu=\mu_*$. We claim that $\mu_*$ is a simple eigenvalue of $K(\xi)$. Indeed, since $\mu_*/\xi=r(\hat{H}_{[-\beta_2 u_\eta]})$, we may choose 
$\psi_0\in\mathrm{int}(\Wqqp)$ with $\mu_*\psi_0=\xi\hat{H}_{[-\beta_2 u_\eta]}\,\psi_0$. Setting 
$$
\psi_*:=Z_2(0,\psi_0)=\Pi_{[-\beta_2 u_\eta]}(\cdot,0)\, \psi_0\in \Wqd\ ,
$$
we obtain
\bqn\label{24}
\mu_*\psi_*=Z_2(0,\xi \Psi_*)
\eqn
as in the proof of Lemma~\ref{L6}. We then seek $\phi_*\in\Wq$ with $\mu_*\phi_*=Z_1(\alpha_2u_\eta\psi_*,\eta \Phi_*)$, i.e. a solution to
$$
\partial_a\phi_*-\Delta_D\phi_*+2\alpha_1u_\eta\phi_*=\frac{\alpha_2}{\mu_*}u_\eta\psi_*\ ,\quad \phi_*(0)=\frac{\eta}{\mu_*}\Phi_*\ 
$$
or, equivalently,
$$
\phi_*(a)=\Pi_{[2\alpha_1 u_\eta]}(a,0)\phi_*(0)+ (N\psi_*)(a)\ ,\qquad (N\psi_*)(a):=
\frac{\alpha_2}{\mu_*}\int_0^a \Pi_{[2\alpha_1 u_\eta]}(a,\sigma)\,\big(u_\eta(\sigma) \psi_*(\sigma)\big)\,\rd \sigma
$$
for $a\in J$ with
$$
\big(1-\frac{\eta}{\mu_*}H_{[2\alpha_1 u_\eta]}\big)\, \phi_*(0)\,=\, \frac{\eta}{\mu_*}\int_0^{a_m} b_1(a)(N\psi_*)(a)\,\rd a\ .
$$
Since $\mu_*\ge 1$ it follows from \eqref{sp} and Lemma~\ref{L1} that $1-\frac{\eta}{\mu_*}H_{[2\alpha_1 u_\eta]}$ is invertible and thus the equation for $\phi_*(0)$ is uniquely solvable. Thus, define $\phi_0\in\Wqqp$ and $\phi_*\in\Wq^+$ by
\begin{align*}
\phi_0&:=\frac{\eta}{\mu_*}
\big(1-\frac{\eta}{\mu_*}H_{[2\alpha_1 u_\eta]}\big)^{-1} \int_0^{a_m} b_1(a)(N\psi_*)(a)\,\rd a\ ,\\ \phi_*&:=\Pi_{[2\alpha_1 u_\eta]}(\cdot,0)\phi_0+ N\psi_* =Z_1(N\psi_*,\phi_0)\ . 
\end{align*}
Then $K(\xi)(\phi_*,\psi_*)=\mu_*(\phi_*,\psi_*)$ and it remains to prove that $\mu_*$ is simple. Clearly, the preceding discussion shows
$$
\mathrm{ker}\big(K(\xi)-\mu_*\big)=\mathrm{span}\{(\phi_*,\psi_*)\}\ .
$$
Suppose that $(\phi_*,\psi_*)\in\mathrm{rg}(K(\xi)-\mu_*)$. Then $Z_2(0,\xi V)-\mu_*v=\psi_*$ for some $v\in\Wq$, that is,
$$
\partial_av-\Delta_D v-\beta_2 u_\eta v=-\frac{1}{\mu_*}\big(\partial_a\psi_*-\Delta_D\psi_*-\beta_2 u_\eta \psi_*\big)=0\ ,\quad v(0)=\frac{\xi}{\mu_*}V-\frac{1}{\mu_*}\psi_0\ .
$$
This readily implies
$$
\big(1-\frac{\xi}{\mu_*}\hat{H}_{[-\beta_2 u_\eta]}\big)\, v(0)=-\frac{1}{\mu_*}\psi_0
$$
so that we obtain the contradiction
$$
\psi_0\in \mathrm{ker}\big(1-\frac{\xi}{\mu_*}\hat{H}_{[-\beta_2 u_\eta]}\big)\cap \mathrm{rg}\big(1-\frac{\xi}{\mu_*}\hat{H}_{[-\beta_2 u_\eta]}\big)=\{0\}
$$
since $\mu_*/\xi=r(\hat{H}_{[-\beta_2 u_\eta]})$ is a simple eigenvalue of $\hat{H}_{[-\beta_2 u_\eta]}$. Thus $(\phi_*,\psi_*)\notin\mathrm{rg}(K(\xi)-\mu_*)$ and $\mu_*$ is indeed a simple eigenvalue of $K(\xi)$. This ensures
$$
\mathrm{Ind}(0,K(\xi))=-1\ ,\quad 0\le \xi-\xi_0(\eta)<<1\ ,
$$
and the assertion follows.
\end{proof}

Taking $\xi=\xi_0(\eta)$ and $\mu_*=1$, the proof of Lemma~\ref{L8} reveals

\begin{cor}\label{C3}
$\mu_*=1$ is a simple eigenvalue of $K(\xi_0(\eta))$. Thus
$$
\Wq\times\Wq=\mathrm{ker}\big(1-K(\xi_0(\eta))\big)\oplus \mathrm{rg}\big(1-K(\xi_0(\eta))\big)\ ,\qquad \mathrm{ker}\big(1-K(\xi_0(\eta))\big)=\mathrm{span}\{(\phi_*,\psi_*)\} 
$$
with $\psi_*=Z_2(0,\psi_0)\in\Wqd$, $\psi_0=\xi_0(\eta)\Psi_* \in\mathrm{int}(\Wqqp)$, and $\phi_*\in\Wqd$.
\end{cor}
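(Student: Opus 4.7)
The plan is to recognise that the entire corollary is the specialisation of the proof of Lemma~\ref{L8} to the boundary value $\xi=\xi_0(\eta)$, at which the only eigenvalue $\mu_*\ge 1$ singled out by the relation $\xi/\mu_*=\xi_0(\eta)$ is $\mu_*=1$. I would therefore re-execute the construction of Lemma~\ref{L8} with this specific choice and read off the three claims of the corollary from the outcome.

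Concretely, by Lemma~\ref{L1} the reciprocal $1/\xi_0(\eta)=r(\hat H_{[-\beta_2u_\eta]})$ is a simple eigenvalue of the strongly positive compact operator $\hat H_{[-\beta_2u_\eta]}$ with eigenvector in $\mathrm{int}(\Wqqp)$, so I may pick $\psi_0\in\mathrm{int}(\Wqqp)$ normalised by $\psi_0=\xi_0(\eta)\hat H_{[-\beta_2u_\eta]}\psi_0$. Setting $\psi_*:=Z_2(0,\psi_0)=\Pi_{[-\beta_2u_\eta]}(\cdot,0)\psi_0\in\Wqd$, this identity reads exactly $\psi_0=\xi_0(\eta)\Psi_*$ in the section-wide convention $\Psi_*=\int_0^{a_m}b_2(a)\psi_*(a)\,\rd a$. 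Because \eqref{sp} combined with Lemma~\ref{L1} gives $1=r(\eta H_{[\alpha_1u_\eta]})>r(\eta H_{[2\alpha_1u_\eta]})$, the operator $1-\eta H_{[2\alpha_1u_\eta]}$ is invertible with nonnegative inverse; following the display chain in Lemma~\ref{L8} this uniquely determines $\phi_0\in\Wqqp$ and then $\phi_*:=\Pi_{[2\alpha_1u_\eta]}(\cdot,0)\phi_0+N\psi_*=Z_1(\alpha_2u_\eta\psi_*,\eta\Phi_*)\in\Wq^+$, for which $K(\xi_0(\eta))(\phi_*,\psi_*)=(\phi_*,\psi_*)$ holds by construction. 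Non-triviality $\phi_*\in\Wqd$ is automatic because the strong positivity of $\Pi_{[-\beta_2u_\eta]}$ places $\psi_*(a)$ in $\mathrm{int}(\Wqqp)$ for $a>0$, hence $N\psi_*\not\equiv 0$.

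For the simplicity of $\mu_*=1$ and the claimed direct sum, Lemma~\ref{L6} shows that any $(\phi,\psi)\in\mathrm{ker}(1-K(\xi_0(\eta)))$ has its second component satisfying $\psi(0)\in\mathrm{ker}\bigl(1-\xi_0(\eta)\hat H_{[-\beta_2u_\eta]}\bigr)=\mathrm{span}\{\psi_0\}$, and $\psi(0)$ in turn determines $\psi$ through $Z_2$ and then $\phi$ through the invertibility of $1-\eta H_{[2\alpha_1u_\eta]}$ and $Z_1$, so that $\mathrm{ker}(1-K(\xi_0(\eta)))=\mathrm{span}\{(\phi_*,\psi_*)\}$. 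The ascent argument is verbatim the last paragraph of the proof of Lemma~\ref{L8} with $\mu_*=1$ and $\xi=\xi_0(\eta)$: supposing $(\phi_*,\psi_*)\in\mathrm{rg}(1-K(\xi_0(\eta)))$ forces $\psi_0\in\mathrm{ker}\bigl(1-\xi_0(\eta)\hat H_{[-\beta_2u_\eta]}\bigr)\cap\mathrm{rg}\bigl(1-\xi_0(\eta)\hat H_{[-\beta_2u_\eta]}\bigr)=\{0\}$, contradicting $\psi_0\ne 0$. Ascent and descent thus being one for the compact operator $K(\xi_0(\eta))$ (Lemma~\ref{L7}(i)), the Riesz decomposition delivers $\Wq\times\Wq=\mathrm{ker}(1-K(\xi_0(\eta)))\oplus\mathrm{rg}(1-K(\xi_0(\eta)))$. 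Since every ingredient is already produced inside the proof of Lemma~\ref{L8}, I do not foresee a genuine obstacle; the only minor point requiring care is keeping the normalisation of $\psi_0$ consistent with the capital-letter convention so that the identity $\psi_0=\xi_0(\eta)\Psi_*$ appears with the right constant.
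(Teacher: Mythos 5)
Your proposal is correct and is essentially the paper's own argument: the paper proves Corollary~\ref{C3} precisely by taking $\xi=\xi_0(\eta)$ and $\mu_*=1$ in the proof of Lemma~\ref{L8}, which is exactly the specialisation you carry out, including the identification of $\mathrm{ker}\big(1-K(\xi_0(\eta))\big)=\mathrm{span}\{(\phi_*,\psi_*)\}$ via Lemma~\ref{L6} and the invertibility of $1-\eta H_{[2\alpha_1 u_\eta]}$, and the simplicity via $(\phi_*,\psi_*)\notin\mathrm{rg}\big(1-K(\xi_0(\eta))\big)$. Your added justifications (nontriviality of $\phi_*$ and the Riesz decomposition giving the direct sum) are routine details consistent with the paper.
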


Owing to Lemma~\ref{L7} and Lemma~\ref{L8} we are now in a position to apply Rabinowitz' global alternative \cite[Cor.6.3.2]{LopezGomezChapman} to \eqref{19} and obtain a continuum $\mathfrak{C}$ (i.e. a closed and connected set) of solutions $(\xi,w,v)$ to \eqref{18a}, \eqref{18b} in $\R\times\Wq\times\Wq$ emanating from $(\xi_0(\eta),0,0)$
and satisfying the alternatives
\begin{itemize}
\item[(i)] $\mathfrak{C}$ is unbounded in $\R\times\Wq\times\Wq$, or
\item[(ii)] there is $\xi\in\Sigma\setminus\{\xi_0(\eta)\}$ with $(\xi,0,0)\in \mathfrak{C}$.
\end{itemize}
In addition, from Corollary~\ref{C3} and \cite[Lem.6.4.1]{LopezGomezChapman} it follows that for $(\xi,w,v)\in \mathfrak{C}$
near the bifurcation point $(\xi_0(\eta),0,0)$ we have
\bqn\label{26}
(w,v)=\ve\big((\phi_*,\psi_*)+ (y_1(\ve),y_2(\ve))\big)\ ,\quad -\ve_0<\ve<\ve_0\ ,
\eqn
for some $\ve_0>0$ and $y_j(\ve)=o(1)$ in $\Wq$ as $\ve\rightarrow 0$. Moreover, according to \cite[Thm.6.4.3]{LopezGomezChapman} and Corollary~\ref{C3}, the continuum $\mathfrak{C}$ consists of two subcontinua $\mathfrak{C}^{\pm}$ both emanating from $(\xi_0(\eta),0,0)$ such that $\mathfrak{C}^{+}$ contains those $(\xi,w,v)\in \mathfrak{C}$ with $\ve\in(0,\ve_0)$ in \eqref{26} and satisfies the same alternatives as $\mathfrak{C}$ or contains a point $(\hat{\xi},\hat{w},\hat{v})$ with $(\hat{w},\hat{v})\in\mathrm{rg}(1-K(\xi_0(\eta)))\setminus\{(0,0)\}$. We then set
$$
\mathfrak{B}_3':=\{(\xi,u_\eta-w,v)\, ;\, (\xi,w,v)\in \mathfrak{C}^+\}\setminus\{(\xi_0(\eta),u_\eta,0)\}\ .
$$
Observe that $(\xi,u,v)\in \mathfrak{B}_3'$ is a solution to \eqref{7}-\eqref{10} with $\xi>\xi_0(\eta)$ by \eqref{17} and Lemma~\ref{L5}, and close to $(\xi_0(\eta),u_\eta,0)$ we can write $(u,v)$ in the form 
$$
(u,v)=\big(u_\eta-\ve\phi_*-\ve y_1(\ve),\ve\psi_*+ \ve y_2(\ve)\big)\ ,\quad 0<\ve<\ve_0\ .
$$
In particular, since $u_\eta(0)\in\mathrm{int}(\Wqqp)$ and $y_1(\ve)=o(1)$ in $\Wq$ as $\ve\rightarrow 0$, we derive from \eqref{emb} that 
$$
u(0)=u_\eta(0)-\ve\phi_*(0)-\ve\gamma_0 y_1(\ve)\in\mathrm{int}(\Wqqp)\ ,\quad 0<\ve<\ve_0\ ,
$$
for $\ve_0>0$ sufficiently small. Hence, using \eqref{7} and \eqref{darst1},
the strong positivity of the evolution operator implies $u\in\Wqd$. Similarly, since $\psi_0\in\mathrm{int}(\Wqqp)$ and $\gamma_0 y_2(\ve)\rightarrow 0$ in $\Wqq$ as $\ve\rightarrow 0$ by \eqref{emb}, we also get 
$$
v(0)=\ve\psi_0+\ve\gamma_0 y_2(\ve) \in\mathrm{int}(\Wqqp)\ ,\quad 0<\ve<\ve_0\ ,
$$ 
with $\ve_0>0$ sufficiently small and thus $v\in\Wqd$ by \eqref{8} and~\eqref{darst2}. 

Therefore, points on the branch $\mathfrak{B}_3'$ close to $(\xi_0(\eta),u_\eta,0)\in\mathfrak{B}_2$ belong to $\R^+\times\Wqd\times \Wqd$. Furthermore, defining
$$
\mathfrak{B}_3:=\mathfrak{B}_3'\cap \big(\R^+\times\Wqd\times \Wqd\big)
$$
we have

\begin{lem}\label{L9}
The branch $\mathfrak{B}_3$ either joins $\mathfrak{B}_2$ with $\mathfrak{B}_1$, or is unbounded in $\R^+\times \Wqd\times\Wqd$.
\end{lem}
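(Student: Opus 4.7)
The plan is to combine the Rabinowitz global alternatives already established for $\mathfrak{C}^+$ with a careful analysis of the limit points of $\mathfrak{B}_3$ on the boundary of $\mathcal{D}:=\R^+\times\Wqd\times\Wqd$. By the construction preceding the statement, a neighbourhood of the bifurcation point in $\mathfrak{B}_3'$ lies in $\mathcal{D}$, so $\mathfrak{B}_3$ is non-empty and already meets $\mathfrak{B}_2$ at $(\xi_0(\eta),u_\eta,0)$; I may therefore assume $\mathfrak{B}_3$ is bounded in $\mathcal{D}$ and aim to prove that it also reaches $\mathfrak{B}_1$.

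\textbf{Classification of limit points.} Suppose $(\xi_n,u_n,v_n)\in\mathfrak{B}_3$ converges in $\R\times\Wq\times\Wq$ to $(\xi^*,u^*,v^*)$; by continuity of \eqref{7}-\eqref{10} the limit is still a solution with $\xi^*\ge 0$ and $u^*,v^*\in\Wq^+$. Strong positivity of the parabolic evolution operator together with \eqref{darst1}-\eqref{darst2} imply that $u^*\in\Wqd$ if and only if $u^*(0)\neq 0$, and analogously for $v^*$, so a limit leaves $\mathcal{D}$ exactly when $\xi^*=0$, $u^*\equiv 0$, or $v^*\equiv 0$. Invoking Theorem~\ref{T1}, $v^*\equiv 0$ forces $u^*\in\{0,u_\eta\}$, while $u^*\equiv 0$ with $v^*\not\equiv 0$ forces $\xi^*>1$ and $v^*=v_{\xi^*}$, and $\xi^*=0$ forces $v^*\equiv 0$ via \eqref{10}. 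Hence any such limit point lies on $\mathfrak{B}_0\cup\mathfrak{B}_1\cup\mathfrak{B}_2$.

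\textbf{Spectral rigidity of limits.} Every $(\xi,u,v)\in\mathfrak{B}_3$ satisfies
\[
r\big(\eta\, H_{[\alpha_1u+\alpha_2v]}\big)=1\ ,\qquad r\big(\xi\,\hat{H}_{[\beta_1v-\beta_2u]}\big)=1\ ,
\]
as a consequence of \eqref{darst1}-\eqref{darst2} and the Krein-Rutman characterization of the principal eigenvalue in Lemma~\ref{L1}. By \eqref{emb}-\eqref{embb}, $\Wq$-convergence of $(u_n,v_n)$ entails convergence of the coefficients in $C(J,C(\bar{\Om}))$, hence norm convergence of the evolution operators and of $H_{[\cdot]}$, $\hat H_{[\cdot]}$ in $\ml(\Wqq)$; the spectral radii therefore pass to the limit. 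A putative limit on $\mathfrak{B}_0$ at $(\xi^*,0,0)$ would yield $\eta\, r(H_{[0]})=\eta=1$ in view of \eqref{12}, contradicting $\eta>1$; a putative limit on $\mathfrak{B}_2$ at $(\xi^*,u_\eta,0)$ with $\xi^*>0$ would yield $\xi^*\, r(\hat H_{[-\beta_2u_\eta]})=1$, that is, $\xi^*=\xi_0(\eta)$. Thus the only admissible limit of $\mathfrak{B}_3$ on $\mathfrak{B}_0\cup\mathfrak{B}_2$ is the starting point itself, and every other limit point must lie on $\mathfrak{B}_1$.

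\textbf{Conclusion and main obstacle.} Rabinowitz' global alternative applied to the compact perturbation \eqref{19} ensures that $\mathfrak{B}_3'$ is either unbounded in $\R\times\Wq\times\Wq$ or returns to the translated trivial line $\{(\xi,0,0)\}$ at some $\xi^*\in\Sigma\setminus\{\xi_0(\eta)\}$, which in the original coordinates amounts to a second accumulation point on $\mathfrak{B}_2$. The spectral rigidity obtained above rules out the return alternative, so $\mathfrak{B}_3'$ must be unbounded in $\R\times\Wq\times\Wq$; combined with the assumed boundedness of $\mathfrak{B}_3$ in $\mathcal{D}$, the connected continuum $\mathfrak{B}_3'$ is then forced to cross $\partial\mathcal{D}$ at a point different from $(\xi_0(\eta),u_\eta,0)$, and by the first two steps this crossing lies on $\mathfrak{B}_1$. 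The delicate point of the argument — the hard part — is precisely this last deduction: one must argue, using that $(\xi,w,v)\mapsto(\xi,u_\eta-w,v)$ is a homeomorphism between $\mathfrak{C}^+$ and $\mathfrak{B}_3'$ and that the latter is a connected continuum, that an unbounded extension outside $\mathcal{D}$ necessarily produces a limit point on the relative boundary of $\mathcal{D}$ which is not the original bifurcation point, so that Steps~1 and~2 can be applied.
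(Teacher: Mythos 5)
Your Steps 1--3 are essentially correct, and the ``spectral rigidity'' device is a genuinely different (and arguably cleaner) way to exclude limit points of $\mathfrak{B}_3$ on $\mathfrak{B}_0$ and on $\mathfrak{B}_2\setminus\{(\xi_0(\eta),u_\eta,0)\}$: from \eqref{darst1}, \eqref{darst2} and Lemma~\ref{L1} every $(\xi,u,v)\in\mathfrak{B}_3$ satisfies $\eta\, r(H_{[\alpha_1u+\alpha_2v]})=1$ and $\xi\, r(\hat{H}_{[\beta_1v-\beta_2u]})=1$, and passing to the limit (continuity of the evolution operator in the coefficient and of the spectral radius, both used elsewhere in the paper) forces $\eta=1$ or $\xi^*=\xi_0(\eta)$, respectively. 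The paper instead excludes a return of positive solutions to $\mathfrak{B}_2$ by the normalized-eigenvector argument of L\'opez-G\'omez/Blat--Brown; your route avoids that blow-up step.

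The final step, however, has genuine gaps. First, $\mathfrak{B}_3'$ is built from the \emph{unilateral} subcontinuum $\mathfrak{C}^+$, and for $\mathfrak{C}^+$ the two-alternative Rabinowitz dichotomy you invoke is not available: there is a third alternative, namely that $\mathfrak{C}^+$ contains a point $(\hat{\xi},\hat{w},\hat{v})$ with $(\hat{w},\hat{v})\in\mathrm{rg}\big(1-K(\xi_0(\eta))\big)\setminus\{(0,0)\}$. Excluding this is one of the two substantive pieces of the paper's proof (the positivity argument culminating in \eqref{uu}, which is impossible because $1/\xi_0(\eta)=r(\hat{H}_{[-\beta_2u_\eta]})$); your proposal never addresses it, and without it you cannot conclude, in the case $\mathfrak{B}_3'=\mathfrak{B}_3$ bounded, that anything leaves $\R^+\times\Wqd\times\Wqd$. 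Falling back on the two-sided continuum $\mathfrak{C}$ does not help, since its unboundedness or return could occur entirely along $\mathfrak{C}^-$, which says nothing about $\mathfrak{B}_3$. Second, the claim that spectral rigidity ``rules out the return alternative'' is unjustified: the rigidity identities hold only along positive solutions, whereas a return point $(\xi^*,u_\eta,0)$ of $\mathfrak{C}^+$ could be approached inside $\mathfrak{C}^+$ through sign-changing $(w,v)$, to which Lemma~\ref{L1} does not apply. Third---and you concede this yourself---the decisive deduction, that boundedness of $\mathfrak{B}_3$ together with unboundedness of (or exit by) $\mathfrak{B}_3'$ produces a limit point of $\mathfrak{B}_3$ outside $\R^+\times\Wqd\times\Wqd$ and distinct from $(\xi_0(\eta),u_\eta,0)$, is left unproved; labelling it ``the hard part'' does not discharge it. The paper handles this by the case split: either $\mathfrak{B}_3'\subset\R^+\times\Wqd\times\Wqd$, in which case alternatives (ii) and (iii) are excluded and unboundedness follows, or $\mathfrak{B}_3'$ leaves that set, in which case connectedness furnishes a sequence in $\mathfrak{B}_3$ converging to a point of $\mathfrak{B}_3'$ outside it, which is then classified. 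As written, your argument classifies possible exit points but does not show that an exit (or unboundedness within the positive cone) must occur.
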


\begin{proof}
Suppose that $\mathfrak{B}_3'$ is contained in $\R^+\times \Wqd\times\Wqd$, that is, $\mathfrak{B}_3'=\mathfrak{B}_3$. Then, according to the alternatives satisfied by $\mathfrak{C}^+$, either
\begin{itemize}
\item[(i)] $\mathfrak{B}_3'$ is unbounded in $\R\times\Wq\times\Wq$, or
\item[(ii)] $\mathfrak{B}_3'$ contains a point $(\xi,u_\eta,0)$ with $\xi\in\Sigma\setminus\{\xi_0(\eta)\}$, or
\item[(iii)] $\mathfrak{B}_3'$ contains a point $(\xi,u_\eta-w,v)$ with $(w,v)\in\mathrm{rg}(1-K(\xi_0(\eta)))\setminus\{(0,0)\}$.
\end{itemize}
Clearly, since $\mathfrak{B}_3'\subset\R^+\times \Wqd\times\Wqd$ by assumption, alternative (ii) is impossible. We now show that alternative (iii) can also be ruled out. Suppose otherwise and let $(\xi,u_\eta-w,v)\in \mathfrak{B}_3'$ and $(f,g)\in\Wq\times\Wq$ with 
$$
(0,0)\not=(w,v)=(1-K(\xi_0(\eta))) (f,g)\ .
$$
As $v\in\Wqd$, we obtain from \eqref{darst2} and \eqref{8} that $v(0)=\xi V\in\mathrm{int}(\Wqqp)$. Due to Corollary~\ref{C3}, $\psi_*(0)=\psi_0\in\mathrm{int}(\Wqqp)$ and so we may choose $\tau>0$ such that $g(0)-v(0)+\tau\psi_0\in\mathrm{int}(\Wqqp)$. Note that
$$
v=g-Z_2(0,\xi_0(\eta)G)\ ,\quad \psi_*=Z_2(0,\xi_0(\eta) \Psi_*)\ ,\quad p:=g-v+\tau\psi_*=Z_2\big(0,\xi_0(\eta)(G+\tau\Psi_*)\big)\ .
$$
The last equality reads
$$
\partial_a p-\Delta_D p-\beta_2u_\eta p=0\ ,\quad p(0)=\xi_0(\eta)(G+\tau\Psi_*)=\xi_0(\eta) P +\xi_0(\eta) V\ ,
$$
from which we deduce that
\bqn\label{uu}
\big(1-\xi_0(\eta)\hat{H}_ {[-\beta_2u_\eta]}\big) p(0)=\xi_0(\eta) V \in\mathrm{int}(\Wqqp)
\eqn
with $p(0)\in\mathrm{int}(\Wqqp)$ by the choice of $\tau$. However, \eqref{uu} has no positive solution owing to \cite[Cor.12.4]{DanersKochMedina} and the definition of $\xi_0(\eta)$ in \eqref{17}. This contradiction ensures that alternative (iii) is also impossible. Consequently, if $\mathfrak{B}_3'$ is completely contained in $\R^+\times \Wqd\times\Wqd$, then $\mathfrak{B}_3'=\mathfrak{B}_3$ is necessarily unbounded. It remains to verify that if $\mathfrak{B}_3'$ is not contained in $\R^+\times \Wqd\times\Wqd$, then $\mathfrak{B}_3$ joins $\mathfrak{B}_2$ with $\mathfrak{B}_1$. 

Supposing that $\mathfrak{B}_3'$ is not completely contained in $\R^+\times \Wqd\times\Wqd$, there are $$(\xi_j,u_j,v_j)\in \R^+\times \Wqd\times\Wqd\quad \text{and}\quad (\xi,u,v)\in \mathfrak{B}_3'\ ,\quad (u,v)\notin \Wqd\times\Wqd$$ with $$(\xi_j,u_j,v_j)\rightarrow (\xi,u,v)\quad\text{in}\quad \R\times \Wq\times\Wq\ .$$ As \eqref{emb} ensures $u(0)\ge 0$ and $v(0)\ge 0$, whence $u,v\in\Wq^+$ by \eqref{darst1}, \eqref{darst2}, the only possibility that $(u,v)$ does not belong to $\Wqd\times\Wqd$ is that $u\equiv 0$ or $v\equiv 0$. 

Assume that both $u\equiv 0$ and $v\equiv 0$. Then $(\xi,u,v)=(\xi,0,0)\in \mathfrak{B}_0$. But the only nontrivial, nonnegative solutions to \eqref{7}-\eqref{10} close to $\mathfrak{B}_0$ lie on the branch $\mathfrak{B}_1=\{(\xi,0,v_\xi)\,;\,\xi\in(1,\infty)\}$, that is,
$(\xi_j,u_j,v_j)$ belong to $\mathfrak{B}_1$ which is impossible since $u_j\in \Wqd$.

Next, assume that $u\not\equiv 0$ but $v\equiv 0$. Then the uniqueness statement of Theorem~\ref{T1} yields $u=u_\eta$. So $(\xi,0,0)$ is a bifurcation point for \eqref{18a}, \eqref{18b}, or equivalently, for \eqref{19}. Thus \cite[Lem.6.1.2]{LopezGomezChapman} implies $\xi\in \Sigma$, whence $\mu=1$ is an eigenvalue of $K(\xi)$. Setting $w_j:=u_\eta-u_j$, it follows from the properties of $K(\xi)$ and $R$ stated Lemma~\ref{L7} exactly as in the proof of \cite[Lem.6.5.3]{LopezGomezChapman}
(see also \cite[Thm.3.1]{BlatBrown2}) that
$$\frac{(w_j,v_j)}{\|(w_j,v_j)\|_{\Wq\times\Wq}}$$ converges to an eigenvector $(\bar{w},\bar{v})\in \Wq^+\times\Wq^+$ of $K(\xi)$ corresponding to the eigenvalue 1. Lemma~\ref{L6} shows that $\bar{v}(0)$ is a positive eigenvector to $\hat{H}_{[-\beta_2u_\eta]}$ associated to the eigenvalue $1/\xi$ and thus $\xi=\xi_0(\eta)$ since $1/\xi_0(\eta)$ is the only eigenvalue with positive eigenvector. But then $(\xi,u,v)=(\xi_0(\eta),u_\eta,0)$ and this is not possible.

Thus, the only possibility is that $u\equiv 0$ but $v\not\equiv 0$ so that, due to the uniqueness statement of Theorem~\ref{T1}, $(\xi,u,v)=(\xi,0,v_\xi)\in\mathfrak{B}_1$. Consequently, $\mathfrak{B}_3'$ joins $\mathfrak{B}_2$ with $\mathfrak{B}_1$ and, as $\mathfrak{B}_3'$ leaves $\R^+\times \Wqd\times\Wqd$ only when meeting $\mathfrak{B}_1$, the same must be true for $\mathfrak{B}_3$.
\end{proof}

We also need to show that if $b_2$ additionally satisfies \eqref{B}, then $\mathfrak{B}_3$ can be unbounded only if it is unbounded with respect to the parameter $\xi$. This is the content of the next lemma.

\begin{lem}\label{L10}
Let $b_2$ satisfy \eqref{B}. For $M>1$ there is $c(M)>0$ such that $\|u\|_{\Wq}+\|v\|_{\Wq}\le c(M)$ whenever $(\xi,u,v)\in\R^+\times\Wq^+\times\Wq^+$ is a solution to \eqref{7}-\eqref{10} with $\xi\le M$.
\end{lem}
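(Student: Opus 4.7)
\textbf{Proof plan for Lemma~\ref{L10}.}
Since $\eta>1$ is the standing assumption throughout Section~\ref{sec 44}, Lemma~\ref{L4} immediately yields $0\le u(a,x)\le u_\eta(a,x)$ on $J\times\Om$ for every solution $(\xi,u,v)$ under consideration. In particular, setting $C_\eta:=\|u_\eta\|_{L_\infty(J\times\Om)}$, the component $u$ is uniformly bounded in $L_\infty$ independently of $\xi$ and $v$. The whole task is then to transfer this to a uniform $\Wq$-bound on $u$ and $v$; the only delicate part is controlling $v$ near $a=0$, where the fertility assumption \eqref{B} enters.

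Put $A:=\beta_2 C_\eta$ and $c:=\beta_1$, and consider the spatially constant ``barrier''
\[
\tilde v(a):=\frac{A}{c\,(1-e^{-Aa})}\ ,\qquad a\in(0,a_m]\ ,
\]
which is an exact solution of the scalar ODE $\tilde v'=A\tilde v-c\tilde v^2$ with $\tilde v(0^+)=+\infty$. The first step is to prove $v(a,x)\le \tilde v(a)$ on $(0,a_m]\times\Om$. Setting $w:=v-\tilde v$ and using $u\le C_\eta$, a direct computation from \eqref{8} gives
\[
\partial_a w-\Delta_D w=\bigl(\beta_2 u-\beta_1(v+\tilde v)\bigr)w+\beta_2(u-C_\eta)\tilde v\ ,
\]
where the inhomogeneity is $\le 0$. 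On $[\ve,a_m]\times\Om$, the parabolic maximum principle yields $w\le 0$ provided $w\le 0$ on the parabolic boundary: on $\partial\Om$ we have $v=0\le\tilde v$, and at $a=\ve$ the embedding \eqref{emb} gives $v(\ve)\in C(\bar\Om)$ while $\tilde v(\ve)\sim 1/\ve\to\infty$, so the inequality $v(\ve,\cdot)<\tilde v(\ve)$ holds once $\ve$ is small. Letting $\ve\downarrow 0$ produces the universal pointwise estimate
\[
\|v(a)\|_\infty\le \frac{\beta_2 C_\eta}{\beta_1(1-e^{-\beta_2 C_\eta a})}\ ,\qquad a\in(0,a_m]\ .
\]

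This is the point where \eqref{B} is used. Applying the age-boundary condition \eqref{10} and observing that $(1-e^{-Aa})/(1-e^{-sa})\to A/s$ as $a\downarrow 0$, so that \eqref{B} implies $\int_0^{a_m}b_2(a)(1-e^{-Aa})^{-1}\,\rd a<\infty$, we obtain
\[
\|v(0)\|_\infty\le \xi\int_0^{a_m}b_2(a)\|v(a)\|_\infty\,\rd a\le\xi\,\frac{\beta_2 C_\eta}{\beta_1}\int_0^{a_m}\frac{b_2(a)}{1-e^{-\beta_2 C_\eta a}}\,\rd a\le M\,K_1(\eta,b_2)\ .
\]
Next, comparing $v$ with the spatially constant supersolution $\|v(0)\|_\infty e^{\beta_2 C_\eta a}$ on the full cylinder $J\times\Om$ via the same maximum-principle argument (now with a bounded initial value) extends the $L_\infty$-bound on $v(0)$ to $\|v\|_{L_\infty(J\times\Om)}\le K_2(M,\eta,b_2)$.

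With both $u$ and $v$ bounded in $L_\infty(J\times\Om)$ by constants depending only on $M$ and $\eta$, the nonlinearities $-\alpha_1u^2-\alpha_2 uv$ and $-\beta_1v^2+\beta_2 uv$ are bounded in $\Lq$. To close the argument one still needs $\Wqq$-bounds on $u(0)$ and $v(0)$. For $v$, inserting the Duhamel representation
\[
v(a)=e^{a\Delta_D}v(0)+\int_0^a e^{(a-\sigma)\Delta_D}\bigl(\beta_2 uv-\beta_1 v^2\bigr)(\sigma)\,\rd \sigma
\]
into \eqref{10} and invoking $\|e^{a\Delta_D}\|_{\ml(L_q,\Wqq)}\le c\,a^{1/q-1}$ (as in the proof of Theorem~\ref{T1}) together with $b_2\in L_\infty$ yields $\|v(0)\|_{\Wqq}\le K_3(M)$; the same device with $b_1$ in place of $b_2$ gives $\|u(0)\|_{\Wqq}\le K_4(M)$. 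Maximal $L_q$-regularity of $-\Delta_D$ on $J$ then furnishes
\[
\|u\|_{\Wq}+\|v\|_{\Wq}\le c(M)\ ,
\]
finishing the proof. The main obstacle is precisely the construction of the singular barrier $\tilde v$, which produces an $L_\infty$-bound on $v(a)$ independent of the unknown initial value $v(0)$; condition \eqref{B} is designed exactly so that integrating this bound against $b_2$ is still finite, which is what closes the otherwise non-coercive nonlocal boundary condition.
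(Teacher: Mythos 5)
Your proof is correct and follows essentially the same route as the paper: the pointwise bound $u\le u_\eta$, an ODE supersolution for $v$ producing a bound of the form $C\,(1-e^{-ma})^{-1}$ whose integral against $b_2$ is finite precisely by \eqref{B}, hence a bound on $v(0)$ and on $\|v\|_\infty$, and finally Duhamel estimates with $\|e^{a\Delta_D}\|_{\ml(L_q,\Wqq)}\le c\,a^{1/q-1}$ plus maximal $L_q$-regularity to pass to $\Wq$-bounds. The only cosmetic difference is that you compare with the singular barrier directly via an $\ve$-truncation and a limit-comparison to handle \eqref{B}, whereas the paper starts the barrier at $\|v(0)\|_\infty$, bounds it by the same singular profile, and simply assumes $m\ge s$.
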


\begin{proof}
Let $(\xi,u,v)\in\R^+\times\Wq^+\times\Wq^+$ be any solution to \eqref{7}-\eqref{10} with $\xi\le M$. Since $$u(a)\le u_\eta(a)\le \kappa\eta^2\ ,\quad a\in J\ ,$$ by Lemma~\ref{L3} and Lemma~\ref{L4}, we have
$$
\partial_av-\Delta_Dv=-\beta_1v^2+\beta_2 uv\le -\beta_1v^2+\beta_2 \kappa\eta^2 v\ ,\qquad v(0)=\xi V\ .
$$
Put $m:=\beta_2 \kappa\eta^2$ and
$$
f(a)\,:=m\,\|v(0)\|_\infty\,\left(\beta_1\,\|v(0)\|_\infty\big(1-e^{-ma}\big)+me^{-ma}\right)^{-1}\ ,\quad a\in J\ ,
$$
so that 
\bqn\label{z}
f'=-\beta_1f^2+m f\ ,\quad f(0)=\|v(0)\|_\infty\ ,\qquad f(a)\le\|v(0)\|_\infty \, e^{ma}\ ,\quad a\in J\  .
\eqn
Let $z:=f-v$ and observe that
$$
\partial_a z-\Delta_D z\ge -\beta_1(f+v) z+m z\quad\text{on}\ J\times\Om\ ,\qquad
z\ge 0\ \, \text{on}\ \, J\times\partial\Om\ ,\qquad z(0,\cdot)\ge 0\ \, \text{on}\ \, \Om\ ,
$$
from which we get $z\ge0$, i.e. $v(a)\le f(a)$ on $\bar{\Om}$ for $a\in J$ owing to the parabolic maximum principle \cite[Thm.13.5]{DanersKochMedina}. Since we may assume that $m\ge s$ with $s$ from \eqref{B}, we have
$$
f(a)\le\frac{m}{\beta_1(1-e^{-ma})}\le \frac{m}{\beta_1(1-e^{-sa})}\ ,\quad a>0\ ,
$$
and so it follows from $\xi\le M$ that
$$
v(0)\le M\int_0^{a_m}b_2(a) f(a)\,\rd a\le \frac{Mm}{\beta_1}\int_0^{a_m}b_2(a) (1-e^{-sa})^{-1}\,\rd a\, <\, \infty\ ,
$$
whence 
\bqn\label{zz}
\|u(a)\|_\infty+\|v(a)\|_\infty\le c(M)\ ,\quad a\in J\ ,
\eqn
for some $c(M)>0$ by \eqref{z}. Next, using the maximal regularity property of $-\Delta_D$, we derive from \eqref{7} and \eqref{zz} that there is $c_0(M)>0$ such that
$$
\|u\|_{\Wq}\,\le\, c\,\big(\|u(0)\|_{\Wqq}+\|\alpha_1 u^2+\alpha_2 uv\|_{\Lq}\big)\,\le\, c_0(M)\, \big(\|u(0)\|_{\Wqq}+1\big)\ .
$$
Writing \eqref{7} in the form
$$
u(a)=e^{a\Delta_D}u(0)+\int_0^a e^{(a-\sigma)\Delta_D}\big(-\alpha_1 u(\sigma)^2-\alpha_2 u(\sigma)v(\sigma)\big)\,\rd \sigma\ , \quad a\in J\ ,
$$
and using $\|e^{a\Delta_D}\|_{\ml(L_q,\Wqq)}\le c a^{1/q-1}$ for $a>0$, we obtain from \eqref{9} and \eqref{zz}
\bqnn
\begin{split}
\|u(0)\|_{\Wqq}\,&\le\,\eta\,\|b_1\|_\infty\int_0^{a_m} \|e^{a\Delta_D}\|_{\ml(L_q,\Wqq)}\, \|u(0)\|_{L_q}\,\rd a\\
&\qquad+\eta\,\|b_1\|_\infty\int_0^{a_m}\int_0^a  \|e^{(a-\sigma)\Delta_D}\|_{\ml(L_q,\Wqq)}\, \|\alpha_1 u(\sigma)^2+\alpha_2 u(\sigma) v(\sigma)\|_{L_q}\,\rd \sigma\,\rd a\\
&\le\, c_1(M)
\end{split}
\eqnn
and consequently $\|u\|_{\Wq}\le c(M)$. Since $\xi\le M$, we similarly deduce $\|v\|_{\Wq}\le c(M)$.
\end{proof}

Finally, we show that $\mathfrak{B}_3$ connects $\mathfrak{B}_2$ with $\mathfrak{B}_1$ for certain values of $\eta$.
To state the precise result observe that $r(H_{[\alpha_2v_\xi]})$ is a strictly decreasing function of $\xi$ on $(1,\infty)$ according to Lemma~\ref{L1} and Corollary~\ref{C1}.
Since $v_\xi$ depends continuously on $\xi$ in the topology of $\Wq$, we obtain from \cite[II.Lem.5.1.4]{LQPP} that the evolution operator $\Pi_{[\alpha_2v_\xi]}(a,0)$ and hence $\hat{H}_{[\alpha_2v_\xi]}$ depend continuously on $\xi$ in the corresponding operator topologies. Together with the fact that the spectral radius considered as a function $\mk(\Wqq)\rightarrow \R^+$ is continuous (see \cite[Thm.2.1]{Degla09}), we conclude that
\bqn\label{46a}
\big(\xi\mapsto r(H_{[\alpha_2v_\xi]})\big)\in C\big((1,\infty),(0,\infty)\big)\ \,\text{is strictly decreasing}\ .
\eqn
By Theorem~\ref{T1}, the branch $\{(\xi,v_\xi)\,;\, \xi>1\}$ emanates from $(1,0)$ and $r(H_{[\alpha_2v_\xi]})<r(H_{[0]})=1$ thanks to Lemma~\ref{L1} and \eqref{12}, hence $\lim_{\xi\rightarrow 1}r(H_{[\alpha_2v_\xi]})=1$. Defining $N\in (1,\infty]$ by
\bqn\label{ooo}
N:=\frac{1}{\lim\limits_{\xi\rightarrow \infty}r(H_{[\alpha_2v_\xi]})}\ ,
\eqn
we thus find for any $\eta\in (1,N)$ fixed a unique $\xi_1:=\xi_1(\eta)>1$ with
\bqn\label{ooooo}
\eta=\frac{1}{r(H_{[\alpha_2v_{\xi_1}]})}\ .
\eqn
For values of $\eta$ less than $N$ we can improve Lemma~\ref{L9}:

\begin{lem}\label{L34}
Suppose $b_2$ satisfies \eqref{B}. If $\eta\in (1,N)$, then $\mathfrak{B}_3$ joins up with $\mathfrak{B}_1$ at the point $(\xi_1,0,v_{\xi_1})$.
\end{lem}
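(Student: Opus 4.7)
The plan is to combine the dichotomy of Lemma~\ref{L9} with Lemma~\ref{L5}(ii) to force $\mathfrak{B}_3$ to meet $\mathfrak{B}_1$, and then pin down the meeting point by a Krein--Rutman argument on a rescaled sequence.

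The first step is to rule out the unboundedness alternative in Lemma~\ref{L9}. Suppose for contradiction that $\mathfrak{B}_3$ is unbounded in $\R^+\times\Wqd\times\Wqd$. Since $b_2$ satisfies \eqref{B}, Lemma~\ref{L10} forces the unboundedness to be with respect to $\xi$, so there is a sequence $(\xi_j,u_j,v_j)\in\mathfrak{B}_3$ with $\xi_j\to\infty$. For $j$ large one has $\xi_j>1$ with $u_j,v_j\in\Wqd$, hence Lemma~\ref{L5}(ii) yields $\eta\ge 1/r(H_{[\alpha_2 v_{\xi_j}]})$. Combining this with \eqref{46a} and the definition \eqref{ooo} of $N$, the right-hand side tends to $N$, so $\eta\ge N$, contradicting $\eta\in(1,N)$. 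Thus $\mathfrak{B}_3$ joins $\mathfrak{B}_1$ at some point $(\bar\xi,0,v_{\bar\xi})$ with $\bar\xi>1$.

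The second step is to show $\bar\xi=\xi_1$. Select $(\xi_j,u_j,v_j)\in\mathfrak{B}_3$ with $(\xi_j,u_j,v_j)\to(\bar\xi,0,v_{\bar\xi})$ in $\R\times\Wq\times\Wq$. Since each $u_j\in\Wqd$, relation \eqref{darst1} gives $u_j(0)\in\mathrm{int}(\Wqqp)$, so the rescaling $\hat u_j:=u_j/\|u_j(0)\|_{\Wqq}$ is well defined. The nonlocal identity in \eqref{darst1} is linear in $u$, hence
\bqnn
\hat u_j(0)\,=\,\eta\, H_{[\alpha_1 u_j+\alpha_2 v_j]}\,\hat u_j(0)\ ,\qquad \|\hat u_j(0)\|_{\Wqq}=1\ .
\eqnn
Continuous dependence of the parabolic evolution operator on its coefficients (\cite[II.Lem.5.1.4]{LQPP}), together with $u_j\to 0$ and $v_j\to v_{\bar\xi}$ in $\Wq$, yields $H_{[\alpha_1 u_j+\alpha_2 v_j]}\to H_{[\alpha_2 v_{\bar\xi}]}$ in $\ml(\Wqq)$. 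Writing
\bqnn
\hat u_j(0)\,=\,\eta\, H_{[\alpha_2 v_{\bar\xi}]}\,\hat u_j(0)\,+\,\eta\,\big(H_{[\alpha_1 u_j+\alpha_2 v_j]}-H_{[\alpha_2 v_{\bar\xi}]}\big)\,\hat u_j(0)\ ,
\eqnn
the last term goes to $0$ in $\Wqq$, and compactness of $H_{[\alpha_2 v_{\bar\xi}]}$ (Lemma~\ref{L1}) delivers a subsequence $\hat u_j(0)\to\hat u_0$ in $\Wqq$ with $\hat u_0\in\Wqqp$, $\|\hat u_0\|_{\Wqq}=1$, and $\hat u_0=\eta\,H_{[\alpha_2 v_{\bar\xi}]}\hat u_0$. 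By the Krein--Rutman statement in Lemma~\ref{L1}, $\eta=1/r(H_{[\alpha_2 v_{\bar\xi}]})$, and the strict monotonicity \eqref{46a} combined with \eqref{ooooo} forces $\bar\xi=\xi_1$.

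The main delicacy is the last step: extracting a nontrivial limit from the vanishing sequence $u_j\to 0$. Normalizing by $\|u_j(0)\|_{\Wqq}$ (and not, say, by $\|u_j\|_{\Wq}$) is essential because the nonlocal boundary relation in \eqref{darst1} rescales to a clean spectral identity at $\eta$; compactness and strong positivity of $H_{[\alpha_2 v_{\bar\xi}]}$ furnished by Lemma~\ref{L1} then identify $\eta$ with $1/r(H_{[\alpha_2 v_{\bar\xi}]})$ and uniqueness of $\xi_1$ finishes the proof.
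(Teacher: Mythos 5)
Your proof is correct, and while the first half coincides with the paper's argument, the identification of the meeting point is done by a genuinely different (and more direct) route. Like the paper, you use Lemma~\ref{L5}(ii) together with \eqref{46a} and the definition \eqref{ooo} of $N$ to bound $\xi$ along $\mathfrak{B}_3$ when $\eta<N$, and then Lemma~\ref{L10} plus Lemma~\ref{L9} to force $\mathfrak{B}_3$ to join $\mathfrak{B}_1$ at some $(\bar\xi,0,v_{\bar\xi})$ with $\bar\xi>1$. For the identification $\bar\xi=\xi_1$, the paper substitutes $v=v_\xi+w$, recasts \eqref{18aaa}, \eqref{18bbb} as a fixed-point equation with the compact family $\tilde K(\xi)$, normalizes the full pair $(u_j,w_j)$ by its joint norm, invokes the argument of \cite[Lem.6.5.3]{LopezGomezChapman} to obtain a limiting eigenvector $(\phi,\psi)$ of $\tilde K(\hat\xi)$ at eigenvalue $1$, and then needs an extra spectral comparison in the spirit of Lemma~\ref{L6} (using $r(\hat\xi\hat H_{[\beta_1 v_{\hat\xi}]})>r(\hat\xi\hat H_{[2\beta_1 v_{\hat\xi}]})$) to rule out $\phi\equiv0$ before concluding $\eta^{-1}=r(H_{[\alpha_2 v_{\hat\xi}]})$. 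You instead exploit the homogeneity of the nonlocal relation \eqref{darst1} in $u(0)$, normalize only $u_j(0)$ in $\Wqq$, use the continuous dependence of $H_{[h]}$ on $h$ (the same \cite[II.Lem.5.1.4]{LQPP} continuity the paper uses to establish \eqref{46a}; to apply it one should note that $\alpha_1u_j+\alpha_2v_j\to\alpha_2 v_{\bar\xi}$ in $C^\varrho(J,C(\bar\Om))$ via \eqref{embb}), and then compactness together with the Krein--Rutman part of Lemma~\ref{L1} gives $\eta^{-1}=r(H_{[\alpha_2 v_{\bar\xi}]})$ at once; nontriviality of the limit is automatic because $\|\hat u_j(0)\|_{\Wqq}=1$. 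Your route avoids the $\tilde K,\tilde R$ machinery and the separate nondegeneracy argument for the first component, at the price of requiring operator-norm convergence $H_{[\alpha_1u_j+\alpha_2v_j]}\to H_{[\alpha_2 v_{\bar\xi}]}$, which is anyway within the paper's toolkit; the paper's approach, in return, produces the full limiting eigenpair of the linearization, consistent with the framework it uses elsewhere for the bifurcation analysis.
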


\begin{proof}
If $(\xi,u,v)\in \R^+\times\Wqd\times\Wqd$ solves \eqref{7}-\eqref{10}, then \mbox{$v\ge v_\xi$} by Lemma~\ref{L4} while Lemma~\ref{L5}~(ii) shows $1\le\eta r(H_{[\alpha_2v_\xi]})$. Thus, by definition of $N$, if $\eta<N$, then necessarily there must be some $M(\eta)>0$ such that $\xi\le M(\eta)$ for all $(\xi,u,v)\in \mathfrak{B}_3\subset  \R^+\times\Wqd\times\Wqd$. Now Lemma~\ref{L10} together with Lemma~\ref{L9} imply that $\mathfrak{B}_3$ must join up with $\mathfrak{B}_1$, say, at the point $(\hat{\xi},0,v_{\hat{\xi}})$. 

To determine $\hat{\xi}$ note first that, due to Lemma~\ref{L4}, $(\xi,u,v)=(\xi,u,v_\xi+w)\in \R^+\times\Wq^+\times \Wq^+$ solves \eqref{7}-\eqref{10} if and only if $(\xi,u,w)\in \R^+\times\Wq^+\times \Wq^+$ solves
\begin{align}
&\partial_au-\Delta_Du=-\alpha_1u^2-\alpha_2(v_\xi+w) u\ ,& u(0)=\eta U\ ,\label{18aaa}\\
&\partial_aw-\Delta_Dw=-\beta_1w^2-2\beta_1v_\xi w+\beta_2 (v_\xi+w) u\ ,& w(0)=\xi W\ ,\label{18bbb}
\end{align}
where we put
$$
U:=\int_0^{a_m} b_1(a)\, u(a)\,\rd a\ ,\qquad W:=\int_0^{a_m} b_2(a)\, w(a)\,\rd a\ .
$$
Introducing
$$
T:=\big(\partial_a-\Delta_D,\gamma_0)^{-1}\in \ml(\Lq\times\Wqq, \Wq)
$$
and the operators 
$$
\tilde{K}(\xi)(u,w):=\left(\begin{matrix} T(-\alpha_2v_\xi u ,\eta U)\\ T(-2\beta_1v_\xi w+\beta_2 v_\xi u,\xi W)\end{matrix}\right)\ ,\qquad \tilde{R}(u,w):=-\left(\begin{matrix} T(-\alpha_1 u^2-\alpha_2uw,0)\\ T(-\beta_1w^2+\beta_2uw,0)\end{matrix}\right)\ 
$$ 
acting on $(u,w)\in \Wq\times\Wq$, equations \eqref{18aaa}, \eqref{18bbb} are equivalent to
\bqn\label{1999}
(u,w)-\tilde{K}(\xi)(u,w)+\tilde{R}(u,w)=0\ .
\eqn
The operators $\tilde{K}(\xi)$ and $\tilde{R}$ possess the properties stated in Lemma~\ref{L7} (i), (ii). Now, if $((\xi_j,u_j,v_j))_j$ is a sequence in $\mathfrak{B}_3$ converging to $(\hat{\xi},0,v_{\hat{\xi}})$, set $w_j:=v_j-v_{\xi_j}$. As $v_\xi$ depends continuously on $\xi$, formulation \eqref{1999} and the properties of $\tilde{K}(\xi)$ and $\hat{R}$ readily  imply (see, e.g., the proof of \cite[Lem.6.5.3]{LopezGomezChapman} or \cite[Thm.3.1]{BlatBrown2}) that
$$
\frac{(u_j,w_j)}{\|(u_j,w_j)\|_{\Wq\times\Wq}}
$$
converges to some eigenvector $(\phi,\psi)\in\Wq^+\times\Wq^+\setminus\{(0,0)\}$ of $\tilde{K}(\hat{\xi})$ associated to the eigenvalue $1$ and thus satisfying \eqref{18aaa}, \eqref{18bbb} with $\xi=\hat{\xi}$ when higher order terms are neglected:
\begin{align*}
\partial_a\phi-\Delta_D\phi &=-\alpha_2 v_{\hat{\xi}}\phi\ ,& \phi(0)=\eta\,\Phi\ ,\\
\partial_a\psi-\Delta_D\psi &=-2\beta_1v_{\hat{\xi}}\psi+\beta_2 v_{\hat{\xi}}\phi\ ,& \psi(0)=\hat{\xi}\Psi\ .
\end{align*}
Observing that $$1=r(\hat{\xi}\hat{H}_{[\beta_1 v_{\hat{\xi}}]})>r(\hat{\xi}\hat{H}_{[2\beta_1 v_{\hat{\xi}}]})$$  by the analogue of \eqref{sp} and Lemma~\ref{L1}, it follows by a contradiction argument exactly as in the proof of Lemma~\ref{L6} that $\phi\not\equiv 0$.
In particular, this shows that $(1-\eta H_{[\alpha_2 v_{\hat{\xi}}]}) \phi(0)=0$ with $\phi(0)>0$. Hence $\eta^{-1}=r(H_{[\alpha_2 v_{\hat{\xi}}]})$ due to Lemma~\ref{L1} and so $\hat{\xi}=\xi_1$ by~\eqref{ooooo}. This proves the lemma.
\end{proof}

Gathering Lemma~\ref{L9}, Lemma~\ref{L10}, and Lemma~\ref{L34}, the proof of Theorem~\ref{T2} is complete since there is no solution $(\xi,u,v)$ in $\R\times\Wqd\times\Wq^+$ if $\eta\le 1$ and bifurcation of $\mathfrak{B}_3$ at $(\xi_0(\eta),u_\eta,0)$ must be to the right according to Lemma~\ref{L5}. 

\begin{rem}\label{R2}
Note that $\|v_\xi\|_{\Wq}\rightarrow\infty$ as $\xi\rightarrow\infty$ by Theorem~\ref{T1} (in fact: \mbox{$\|v_\xi(0)\|_{\infty}\rightarrow\infty$} by Lemma~\ref{L3}) suggesting that $r(H_{[\alpha_2v_\xi]})$ tends to zero as $\xi$ approaches infinity or, equivalently, that $N= \infty$ in \eqref{ooo} and whence also in Theorem~\ref{T2}.
\end{rem}

\section{Bifurcation for the Parameter $\eta$:  Proof of Theorems \ref{T3} and \ref{T4}}\label{sec4}

This section is dedicated to the proofs of Theorem~\ref{T3} and Theorem~\ref{T4}. We thus regard $\eta$ as bifurcation parameter in \eqref{7}-\eqref{10} and keep $\xi$ fixed. We write $(\eta,u,v)$ for a solution to \eqref{7}-\eqref{10} and suppress $\xi$ since no confusion seems likely.\\

\subsection{Proof of Theorem \ref{T3}}\label{5.3}

The argument used in the proof of Theorem~\ref{T2} is similar to that for the proof of Theorem~\ref{T3} and we thus merely sketch the latter pointing out the main modifications to be made. Let $\xi>1$ be fixed. Then Theorem~\ref{T1} ensures the existence of the semi-trivial branches
$$
\mathfrak{S}_1=\{(\eta,u_\eta,0)\,;\, \eta>1\}\ ,\quad \mathfrak{S}_2=\{(\eta,0,v_\xi)\,;\, \eta\in\R\}
$$
of solutions to \eqref{7}-\eqref{10} in $\R\times\Wq^+\times \Wq^+$. Recall from \eqref{18aaa}, \eqref{18bbb} that $$(\eta,u,v)=(\eta,u,v_\xi+w)\in \R^+\times\Wq^+\times \Wq^+$$ solves \eqref{7}-\eqref{10} provided that $(\eta,u,w)\in \R^+\times\Wq^+\times \Wq^+$ satisfies
\bqn\label{19999}
(u,w)-\hat{K}(\eta)(u,w)+\hat{R}(u,w)=0\ ,
\eqn
with
$$
\hat{K}(\eta)(u,w):=\left(\begin{matrix} \hat{Z}_1(0,\eta U)\\ \hat{Z}_2(\beta_2v_\xi u,\xi V)\end{matrix}\right)\ ,\qquad \hat{R}(w,v):=-\left(\begin{matrix} \hat{Z}_1(-\alpha_1 u^2-\alpha_2wu,0)\\ \hat{Z}_2(-\beta_1v^2+\beta_2uw,0)\end{matrix}\right)\ 
$$
for $(w,v)\in \Wq\times\Wq$, where $\hat{Z}_j\in \ml(\Lq\times\Wqq, \Wq)$ are given by
$$
\hat{Z}_1:=\big(\partial_a-\Delta_D+\alpha_2 v_\xi ,\gamma_0)^{-1}\ ,\quad \hat{Z}_2:=\big(\partial_a-\Delta_D+2\beta_1 v_\xi ,\gamma_0)^{-1}\ .
$$
The operators $\hat{K}(\eta)$ and $\hat{R}$ possess the properties stated in Lemma~\ref{L7} (i), (ii). Analogously to Lemma~\ref{L6} one shows that, given $\eta\in\R$, if $\mu\ge 1$ is an eigenvalue of $\hat{K}(\eta)$ with eigenvector $(u,v)\in\Wq\times\Wq$, then $\eta\not=0$, and $\mu/\eta$ is an eigenvalue of~$H_{[\alpha_2 v_\xi]}$ with eigenvector $u(0)\in\Wqq$. As in Lemma~\ref{L8}, if
\bqn\label{oo}
\eta_0(\xi):=\frac{1}{r(H_{[\alpha_2 v_\xi]})}>1\ ,
\eqn
then $\mathrm{Ind}(0,\hat{K}(\eta))$ changes sign as $\eta$ crosses $\eta_0(\xi)$, and $\mu_*=1$ is a simple eigenvalue of $\hat{K}(\eta_0(\xi))$.
Invoking again \cite[Cor.6.3.2]{LopezGomezChapman} we obtain a connected branch $\mathfrak{S}_3'\subset\R\times\Wq\times\Wq$ of solutions to \eqref{7}-\eqref{10} bifurcating from $(\eta_0(\xi),0,v_\xi)$. By definition, $(\eta_0(\xi),0,v_\xi)\not\in\mathfrak{S}_3'$. Further, $\mathfrak{S}_3'$ satisfies the alternatives
\begin{itemize}
\item[(i)] $\mathfrak{S}_3'$ is unbounded in $\R\times\Wq\times\Wq$, or
\item[(ii)] $\mathfrak{S}_3'$ contains a point $(\eta,0,v_\xi)$ such that $1$ is an eigenvalue of $\hat{K}(\eta)$ but $\eta\not=\eta_0(\xi)$, or
\item[(iii)] $\mathfrak{S}_3'$ contains a point $(\eta,u,v_\xi+w)$ with $(u,w)\in\mathrm{rg}(1-\hat{K}(\eta_0(\xi)))\setminus\{(0,0)\}$.
\end{itemize}
Moreover, points on $\mathfrak{S}_3'$ close to $(\eta_0(\xi),0,v_\xi)$ belong to $\R^+\times\Wqd\times \Wqd$. In fact, we have:

\begin{lem}\label{L14}
Let $\mathfrak{S}_3:=\mathfrak{S}_3'\cap \big(\R^+\times\Wqd\times \Wqd\big)$. Then
$\mathfrak{S}_3=\mathfrak{S}_3'$.
\end{lem}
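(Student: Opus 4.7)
The argument parallels the one used in Lemma~\ref{L9}: since it has already been observed that points on $\mathfrak{S}_3'$ close to the bifurcation point $(\eta_0(\xi),0,v_\xi)$ lie in $\R^+\times\Wqd\times\Wqd$, the set $\mathfrak{S}_3$ is nonempty, so by connectedness of the Rabinowitz continuum $\mathfrak{S}_3'$ it suffices to rule out boundary points. Suppose, for contradiction, that $\mathfrak{S}_3\subsetneq\mathfrak{S}_3'$ and choose a sequence $(\eta_j,u_j,v_j)\in\mathfrak{S}_3$ converging in $\R\times\Wq\times\Wq$ to some $(\eta,u,v)\in\mathfrak{S}_3'$ with $(u,v)\notin \Wqd\times\Wqd$. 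Passing to the limit in \eqref{darst1}, \eqref{darst2} forces $u,v\in\Wq^+$, so one of the alternatives $u\equiv 0$ or $v\equiv 0$ must hold.

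The first task is to eliminate $v\equiv 0$ by means of Lemma~\ref{L4}: each $v_j$ satisfies $v_j(a)\ge v_\xi(a)$ on $\Omega$ for $a\in J$, and by the continuous embedding \eqref{emb} this inequality passes to the limit; since $v_\xi\in\Wqd$, we conclude $v\not\equiv 0$. Therefore $u\equiv 0$ and $v\not\equiv 0$, and the uniqueness statement of Theorem~\ref{T1} applied to \eqref{8}--\eqref{10} with $u\equiv 0$ identifies $v=v_\xi$, so that $(\eta,u,v)=(\eta,0,v_\xi)\in\mathfrak{S}_2$.

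The final step is to pin down $\eta$ by a standard linearization argument. Setting $w_j:=v_j-v_\xi$, the triple $(\eta_j,u_j,w_j)$ solves \eqref{19999} and satisfies $(u_j,w_j)\to(0,0)$ in $\Wq\times\Wq$; since $\hat R(u_j,w_j)=o(\|(u_j,w_j)\|_{\Wq\times\Wq})$ by Lemma~\ref{L7}(ii), dividing by $\|(u_j,w_j)\|_{\Wq\times\Wq}$ and exploiting the compactness of $\hat K(\eta)$ together with the continuous dependence $\hat K(\eta_j)\to\hat K(\eta)$ produces, after extraction of a subsequence, a limit $(\phi,\psi)\in\Wq^+\times\Wq^+\setminus\{(0,0)\}$ with $\hat K(\eta)(\phi,\psi)=(\phi,\psi)$, following the lines of \cite[Lem.6.5.3]{LopezGomezChapman} and \cite[Thm.3.1]{BlatBrown2}. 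Invoking the analogue of Lemma~\ref{L6}---where $\phi\equiv 0$ is ruled out through the $v_\xi$-analogue of the identity \eqref{sp}, namely $r(\xi\hat H_{[\beta_1 v_\xi]})=1$, combined with the strict monotonicity $r(\hat H_{[\beta_1 v_\xi]})>r(\hat H_{[2\beta_1 v_\xi]})$ provided by Lemma~\ref{L1}---one obtains $\phi\not\equiv 0$, and then $\phi(0)\in\Wqq\setminus\{0\}$ is a positive eigenvector of $H_{[\alpha_2 v_\xi]}$ with eigenvalue $1/\eta$. The Krein--Rutman part of Lemma~\ref{L1} forces $1/\eta=r(H_{[\alpha_2 v_\xi]})$, that is, $\eta=\eta_0(\xi)$ by \eqref{oo}. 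But $(\eta_0(\xi),0,v_\xi)\notin\mathfrak{S}_3'$ by the convention recorded above, yielding the desired contradiction.

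\textbf{Main obstacle.} The technical heart of the argument is the final linearization step: one must verify that the normalized differences $(u_j,w_j)/\|(u_j,w_j)\|$ have a strong limit, that this limit is a nonzero fixed point of $\hat K(\eta)$, and---most delicately---that its $\phi$-component is not identically zero. All three points have essentially been worked out in the proofs of Lemma~\ref{L9} and Lemma~\ref{L34} (with the roles of $u_\eta$ and $v_\xi$ interchanged), so the present proof transfers those ingredients without requiring any genuinely new input.
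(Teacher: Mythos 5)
Your proposal is correct and follows essentially the same route as the paper: limit point in $\Wq^+\times\Wq^+$, exclusion of $v\equiv 0$ via $v_j\ge v_\xi$ from Lemma~\ref{L4}, identification $v=v_\xi$ by uniqueness in Theorem~\ref{T1}, and then $\eta=\eta_0(\xi)$ by the normalized-sequence/eigenvector argument, contradicting $(\eta_0(\xi),0,v_\xi)\notin\mathfrak{S}_3'$. The only difference is cosmetic: you write out the linearization step (including the exclusion of $\phi\equiv 0$ via the $v_\xi$-analogue of \eqref{sp}) which the paper dispatches by citing \cite[Lem.6.5.3]{LopezGomezChapman} and the proof of Lemma~\ref{L9}.
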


\begin{proof}
Suppose $\mathfrak{S}_3$ is a proper subset of $\mathfrak{S}_3'$. Then there are 
$$
(\eta_j,u_j,v_j)\in \R^+\times \Wqd\times\Wqd\ ,\qquad (\eta,u,v)\in \mathfrak{S}_3'\ ,\quad (u,v)\notin \Wqd\times\Wqd
$$ 
with 
$$
(\eta_j,u_j,v_j)\rightarrow (\eta,u,v)\quad\text{in}\quad  \R\times \Wq\times\Wq\ .
$$
As \eqref{emb} ensures $u(0)\ge 0$ and $v(0)\ge 0$, whence $u,v\in\Wq^+$ by \eqref{darst1}, \eqref{darst2}, the only possibility that $(u,v)$ does not belong to $\Wqd\times\Wqd$ is that $u\equiv 0$ or $v\equiv 0$. However, since $v_j\in\Wqd$ and thus $v_j(a)\ge v_\xi(a)$ for $a\in J$ owing to Lemma~\ref{L4}, $v\in\Wqd$ and so necessarily $u\equiv 0$.
Hence, by the uniqueness statement in Theorem~\ref{T1}, we deduce $v=v_\xi$. But then, $(\eta,0,0)$ is a bifurcation point for \eqref{19999} and it follows from \cite[Lem.6.5.3]{LopezGomezChapman} exactly as in the proof of Lemma~\ref{L9} that this implies $\eta=\eta_0(\xi)$. Thus
$(\eta,u,v)=(\eta_0(\xi),0,v_\xi)$ what is not possible.
\end{proof}

Now, as $\mathfrak{S}_3'=\mathfrak{S}_3\subset \R^+\times\Wqd\times \Wqd$, alternative (ii) above is impossible, while alternative (iii) can be ruled out by using an argument analogous to that in the proof of Lemma~\ref{L9}. Therefore, $\mathfrak{S}_3$ is unbounded in $\R^+\times\Wqd\times \Wqd$. That bifurcation at $(\eta_0(\xi),0,v_\xi)$ is to the right, is a consequence of Lemma~\ref{L5}~(ii). Finally, let $b_2$ satisfy \eqref{B} and suppose there is some $M>0$ with $\eta\le M$ for $(\eta,u,v)\in\mathfrak{S}_3$. Combining Lemma~\ref{L3} and Lemma~\ref{L4} we obtain $\|u(a)\|_\infty\le \kappa M^2$ for $a\in J$ and we may then proceed as in Lemma~\ref{L10} to show that $\|u\|_{\Wq}+\|v\|_{\Wq}\le c(M)$ for some $c(M)>0$ independent of $(\eta,u,v)\in\mathfrak{S}_3$. This completes the proof of Theorem~\ref{T3}.

\subsection{Proof of Theorem~\ref{T4}}\label{5.33}

We now focus on the proof of Theorem~\ref{T4}. Let $\xi<1$. Then Theorem~\ref{T1} implies that
$$
\mathfrak{S}_1=\{(\eta,u_\eta,0)\,;\, \eta>1\}\
$$
is the only semi-trivial branch of solutions to \eqref{7}-\eqref{10}. The same arguments leading to \eqref{46a} show that
\bqn\label{46}
\big(\eta\mapsto r(\hat{H}_{[-\beta_2 u_\eta]})\big)\in C\big((1,\infty),(0,\infty)\big)\ \,\text{is strictly increasing}\ 
\eqn
with $\lim_{\eta\rightarrow 1}r(\hat{H}_{[-\beta_2 u_\eta]})=1$. Defining $\delta\in [0,1)$ by
\bqn\label{oooa}
\delta:=\frac{1}{\lim\limits_{\eta\rightarrow \infty}r(\hat{H}_{[-\beta_2 u_\eta]})}\ ,
\eqn
it follows that for any $\xi\in (\delta,1)$ fixed we find a unique $\eta_1:=\eta_1(\xi)>1$ with
\bqn\label{ooooa}
\xi=\frac{1}{r(\hat{H}_{[-\beta_2 u_{\eta_1}]})}\ .
\eqn
To demonstrate that local bifurcation from $\mathfrak{S}_1$ occurs at the point $(\eta_1,u_{\eta_1},0)$, we apply the theorem of Crandall-Rabinowitz \cite{CrandallRabinowitz}.
Introducing
$$
T=\big(\partial_a-\Delta_D,\gamma_0)^{-1} \in \ml(\Lq\times\Wqq, \Wq)\ ,
$$
we observe from \eqref{18a}, \eqref{18b} that $(\eta,u,v)=(\xi,u_\eta-w,v)\in \R^+\times\Wq^+\times \Wq^+$ solves \eqref{7}-\eqref{10} if and only if $(\eta,w,v)\in \R^+\times\Wq^+\times \Wq^+$ with $w\le u_\eta$ is a zero of the function
$$
G(\eta,w,v):=\left(\begin{matrix} w-T(\alpha_1w^2-2\alpha_1 u_\eta w+\alpha_2 (u_\eta-w) v\, ,\, \eta W)\\ v-T(-\beta_1 v^2 +\beta_2 (u_\eta-w) v\, ,\, \xi V)\end{matrix}\right)\ ,
$$
where we again agree here and for the remainder of this subsection upon the slight abuse of notation
$$
W:=\int_0^{a_m} b_1(a)\, w(a)\,\rd a\ ,\qquad V:=\int_0^{a_m} b_2(a)\, w(a)\,\rd a\ ,
$$
being used for other capital letters as well since it will always be clear from the context, which of the profiles $b_1$ and $b_2$ we mean.
Theorem~\ref{T1} warrants $$G\in C^1((1,\infty)\times\Wq\times\Wq,\Wq\times\Wq)$$ with partial Frech\'et derivatives at $(\eta,w,v)=(\eta_1,0,0)$ given by
$$
G_{(w,v)}(\eta_1,0,0)(\phi,\psi)=\left(\begin{matrix} \phi-T(-2\alpha_1 u_{\eta_1} \phi+\alpha_2 u_{\eta_1} \psi\, ,\, {\eta_1} \Phi)\\ \psi-T(\beta_2 u_{\eta_1} \psi\, ,\, \xi \Psi)\end{matrix}\right)
$$
and
$$
G_{\eta,(w,v)}(\eta_1,0,0)(\phi,\psi)=\left(\begin{matrix} -T(-2\alpha_1 u_{\eta_1}' \phi+\alpha_2 u_{\eta_1}' \psi\, ,\,  \Phi)\\ -T(\beta_2 u_{\eta_1}' \psi\, ,\, 0)\end{matrix}\right)\ ,\qquad u_\eta':=\frac{\partial}{\partial\eta} u_\eta\ 
$$
for $(\phi,\psi)\in \Wq\times\Wq$. We claim that the kernel of $G_{(w,v)}(\eta_1,0,0)$ is one-dimensional. Indeed, for $(\phi,\psi)\in \mathrm{ker}(G_{(w,v)}\big(\eta_1,0,0))$ we have
\begin{align}
\partial_a\phi-\Delta_D\phi &=-2\alpha_1 u_{\eta_1} \phi+\alpha_2 u_{\eta_1} \psi\ ,& \phi(0)=\eta_1\,\Phi\ ,\label{t}\\
\partial_a\psi-\Delta_D\psi &=\beta_2 u_{\eta_1} \psi\ ,& \psi(0)=\xi\Psi\ ,\label{tt}
\end{align}
and so an argument similar to that used in the proof of Lemma~\ref{L8} (with $\mu_*=1$) shows that $(\phi,\psi)$ must be a scalar multiple of $(\phi_*,\psi_*)\in \Wqd\times\Wqd$, where
$$
\psi_*:=\Pi_{[-\beta_2 u_{\eta_1}]}(\cdot,0)\,\psi_0\ ,\qquad \psi_0\in\mathrm{ker}\big(1-\xi\hat{H}_{[-\beta_2 u_{\eta_1}]}\big)\cap \mathrm{int}(\Wqqp)\ ,
$$
and
$$
\phi_*:=\Pi_{[2\alpha_1 u_{\eta_1}]}(\cdot,0)\phi_0+ N\psi_*\ ,\qquad \phi_0:={\eta_1}\big(1-{\eta_1}H_{[2\alpha_1 u_{\eta_1}]}\big)^{-1} \int_0^{a_m} b_1(a)(N\psi_*)(a)\,\rd a\  ,
$$
with
$$
(N\psi_*)(a):=
{\alpha_2}\int_0^a \Pi_{[2\alpha_1 u_{\eta_1}]}(a,\sigma)\,\big(u_{\eta_1}(\sigma) \psi_*(\sigma)\big)\,\rd \sigma\ , \quad a\in J\ .
$$
Thus,
$$
\mathrm{ker}(G_{(w,v)}\big(\eta_1,0,0)\big)=\mathrm{span}\big\{(\phi_*,\psi_*)\big\}\ .
$$ 
As the derivative of $G$ has the form $G_{(w,v)}(\eta_1,0,0)=1-\hat{T}$ with a compact operator $\hat{T}$ (see \eqref{comp}), we also get from this that the codimension of $\mathrm{rg}\big(G_{(w,v)}(\eta_1,0,0)\big)$ equals one.
To check the transversality condition of \cite{CrandallRabinowitz}, suppose that
$$
G_{\eta,(w,v)}(\eta_1,0,0)(\phi_*,\psi_*)\in\mathrm{rg}\big(F_{(w,v)}(\eta_1,0,0)\big)
$$
and let $v\in\Wq$ be with 
$$
v-T(\beta_2u_{\eta_1} v, \xi V)=-T(\beta_2 u_{\eta_1}'\psi_*, 0)\ .
$$
Since $\psi_0\in\mathrm{int}(\Wqqp)$ we may choose $\tau>0$ such that $\tau\psi_0-v(0)\in\mathrm{int}(\Wqqp)$. Setting $p:=\tau \psi_*-v$ and observing $\psi= T(\beta_2 u_{\eta_1}\psi,\xi\Psi)$,
it follows $$p=T(\beta_2u_{\eta_1} p+\beta_2 u_{\eta_1}'\psi_*,\xi P)\ ,$$ that is,
$$
\partial_a p-\Delta_D p=\beta_2u_{\eta_1} p+\beta_2 u_{\eta_1}'\psi_*\ ,\quad p(0)=\xi P \ ,
$$
from which
$$
\big(1-\xi \hat{H}_ {[-\beta_2u_{\eta_1}]}\big)\, p(0)\,=\,\xi \beta_2\int_0^{a_m}b_2(a)\,\int_0^a\Pi_{[-\beta_2u_{\eta_1}]}(a,\sigma)\,\big(u_{\eta_1}'(\sigma)\,\psi_*(\sigma)\big)\,\rd \sigma\,\rd a\ .
$$
This contradicts that fact that this equation has no positive solution $p(0)=\tau\psi_0-v(0)$ owing to \cite[Cor.12.4]{DanersKochMedina} and \eqref{ooooa} since the right hand side belongs to $\mathrm{int}(\Wqqp)$ thanks to \eqref{11}, Corollary~\ref{C2}, and the strong positivity of the operator $\Pi_{[-\beta_2u_{\eta_1}]}(a,\sigma)$ for $0\le\sigma< a\le a_m$. Consequently, 
$$
G_{\eta,(w,v)}(\eta_1,0,0)(\phi_*,\psi_*)\notin\mathrm{rg}\big(G_{(w,v)}(\eta_1,0,0)\big)\ .
$$
We are thus in a position to apply \cite[Thm.1.7]{CrandallRabinowitz} and deduce the existence of a branch $\mathfrak{S}_4'$ of solutions to \eqref{7}-\eqref{10} bifurcating from $(\eta_1,u_{\eta_1},0)$, where $\mathfrak{S}_4'$ is of the form
$$
\mathfrak{S}_4'=\big\{\big(\eta(\ve),\ve(\phi_*,\psi_*)+\ve(\theta_1(\ve),\theta_2(\ve))\big)\, ;\, \vert\ve\vert<\ve_0\big\}
$$
for some $\ve_0>0$ with $\eta(0)=\eta_1$, $\theta_j(0)=0$, and $\theta_j\in C((-\ve_0,\ve_0),\Wq)$. Clearly, it follows from \eqref{darst1} and \eqref{darst2} that, if $\ve_0>0$ is sufficiently small, then the points $(\eta,u,v)$ of $\mathfrak{S}_4'$ associated to values $\ve\in (0,\ve_0)$ in the representation above satisfy $(u,v)\in\Wqd\times\Wqd$ since both $\phi_*(0)=\phi_0$ and $\psi_*(0)=\psi_0$ belong to $\mathrm{int}(\Wqqp)$. Letting 
$$
\mathfrak{S}_4:=\mathfrak{S}_4'\cap (\R^+\times\Wqd\times\Wqd)
$$ 
it is easy to check that $\mathfrak{S}_4$ bifurcates from $(\eta_1,u_{\eta_1},0)$ to the right in view of \eqref{17}, \eqref{46}, and \eqref{ooooa}. The proof of Theorem~\ref{T4} is therefore complete.\\

\begin{rem}\label{R9}
Similarly as in Remark~\ref{R2} we conjecture that $\lim\limits_{\eta\rightarrow \infty}r(\hat{H}_{[-\beta_2 u_\eta]})=\infty$, whence $\delta=0$ in Theorem~\ref{T4}.
\end{rem}


\end{document}